\newtheorem{theorem}{Theorem}[section]
\newtheorem{prop}[theorem]{Proposition}
\newtheorem{lemma}[theorem]{Lemma}
\newtheorem{remark}[theorem]{Remark}
\newtheorem{question}[theorem]{Question}
\newtheorem{definition}[theorem]{Definition}
\newtheorem{cor}[theorem]{Corollary}
\newtheorem{example}[theorem]{Example}
\begin{document}

\title{Moduli space of $J$-holomorphic subvarieties}

\author{Weiyi Zhang}
\address{Mathematics Institute\\  University of Warwick\\ Coventry, CV4 7AL, England}
\email{weiyi.zhang@warwick.ac.uk}

\begin{abstract} We study the moduli space of $J$-holomorphic subvarieties in a $4$-dimensional symplectic manifold. For an arbitrary tamed almost complex structure, we show that the moduli space of a sphere class is formed by a family of linear system structures as in algebraic geometry. Among the applications, we show various uniqueness results of $J$-holomorphic subvarieties, {\it e.g.} for the fiber and exceptional classes in irrational ruled surfaces.  On the other hand, non-uniqueness and other exotic phenomena of subvarieties in complex rational surfaces are explored. In particular, connected subvarieties in an exceptional class with higher genus components are constructed. The moduli space of tori is also discussed, and leads to an extension of the elliptic curve theory.
\end{abstract}
 \maketitle

\tableofcontents
\section{Introduction}
In this paper, we study the moduli space of $J$-holomorphic subvarieties where the almost complex structure $J$ is tamed by a symplectic form. Recall $J$ is said to be tamed by a symplectic form $\omega$ if the bilinear form $\omega(\cdot, J(\cdot))$ is positive definite. When we say $J$ is tamed, we mean it is tamed by an arbitrary symplectic form unless it is said otherwise. $J$-holomorphic subvarieties are the analogues of one dimensional subvarieties in algebraic geometry. In our paper, the ambient space $M$ is of dimension four, where subvarieties are just divisors. In \cite{T1}, Taubes provided systematic local analysis of its moduli space $\mathcal M_e$ of $J$-holomorphic subvarieties in a class $e\in H^2(M, \mathbb Z)$ with the Gromov-Hausdorff topology, in particular when the almost complex structure $J$ is chosen generically. For precise definitions and basic properties, see section 2.1.

For an almost complex structure $J$, and a class $e\in H^2(M, \mathbb Z)$, we introduce the $J$-genus of $e$, 
\begin{equation} \label{J-genus}
\begin{array}{lll}
 g_J(e)&=&\frac{1}{2}(e\cdot e+K_J\cdot e)+1,
 \end{array}
 \end{equation}
  where $K_J$ is the canonical class of $J$. A {\it $K_J$-spherical class} (sometimes called sphere class if there is no confusion of choosing a canonical class) is a class $e$ which could be represented by a smoothly embedded sphere and $g_J(e)=0$. An exceptional curve class $E$ is a $K_J$-spherical class such that $E^2=K\cdot E=-1$.\footnote{Since we are in dimension $4$, we will identify an element in $H_2(M, \mathbb Z)$ with its Poincar\'e dual cohomology class by abusing the notation. Usually, we use $e$ to denote a general class in $H^2(M, \mathbb Z)$. The letter $E$ is reserved for an exceptional curve class.} For a generic tamed $J$, any exceptional curve class is represented by a unique embedded $J$-holomorphic sphere with self-intersection $-1$.

For an arbitrary $J$, even it is tamed, the behaviour of reducible $J$-holo\-morphic subvarieties could be very wild. There are even some unexpected phenomenon for a $K_J$-spherical class. For instance, there are classes of exceptional curves, such that the moduli space are of complex dimension $1$ and some representatives have an elliptic curve component. One such example is constructed in \cite{p=h}, recalled in section 6.1. It shows that an exceptional curve class in $\mathbb CP^2\#8\overline{\mathbb CP^2}$ has a $\mathbb CP^1$ family of subvarieties and some of them have an elliptic curve as one of irreducible components. Such examples, although very simple, were not generally expected by symplectic geometers. Since the Gromov-Witten invariant is $1$, people expected to have uniqueness in some sense. This example is extended to all sphere classes in Proposition \ref{T2comp}. This sort of examples could be even wilder. The example constructed above Question 4.18 in \cite{p=h} is disconnected and has a genus $1$ component. In Example \ref{connectedgenus3}, we show the existence of a rational complex surface such that there is a {\it connected} subvariety with a genus $3$ component in an exceptional curve class. Moreover, the graph attached to the subvariety has a loop. This does not contradict to Gromov-Witten theory. 
In fact, none of the subvarieties in a spherical class with higher genus irreducible components contributes to the Gromov-Witten invariant of $e$, see Remark \ref{notGW}.

In \cite{LZrc, LZ-generic}, the notion of $J$-nefness is introduced. A class is said to be {\it $J$-nef} if it pairs non-negatively with all $J$-holomorphic subvarieties. This condition prevents all the exotic phenomena mentioned in the above. Under this assumption, the topological complexity, {\it e.g.} the genus of each irreducible component and the intersection theory, is well controlled. The result is particularly nice when $g_J(e)=0$. In this case, all the irreducible components of subvarieties in class $e$ are rational curves (comparing to Proposition \ref{T2comp} and Example \ref{connectedgenus3}). Moreover, when $e$ is a sphere class with $e\cdot e\ge 0$, we know there is always a smooth $J$-holomorphic curve in class $e$. Both results are sensitive to the nefness condition. In particular, they no longer hold when $e$ is an exceptional curve class in a rational surface as we mentioned above. However, there are no such examples in irrational ruled surfaces. Here, irrational ruled surfaces are smooth $4$-manifolds diffeomorphic to blowups of sphere bundles over Riemann surfaces with positive genus.

\begin{theorem}\label{intro1}
Let $M$ be an irrational ruled surface, and $E$ an exceptional class. Then for any tamed $J$ and any subvariety in class $E$, each irreducible component is a rational curve of negative self-intersection. Moreover, the moduli space $\mathcal M_E$ is a single point.
\end{theorem}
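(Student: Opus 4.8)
The plan is to use the ruling of $M$ to push every component of a subvariety in class $E$ into a fibre, and then to play the adjunction formula against the negative semidefinite intersection theory inside a fibre.

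First I would set up the ruling. Since $M$ is irrational ruled there is a fibre class $F$ with $F^2=0$, $K_J\cdot F=-2$, hence $g_J(F)=0$; blowing down followed by the bundle projection gives a fixed smooth map $M\to\Sigma_g$ onto a surface of genus $g\ge 1$ with generic fibre of class $F$. Two inputs: (a) for any tamed $J$ the class $F$ is $J$-nef, and more precisely (McDuff; equivalently the linear system structure of the sphere class $F$) the $J$-subvarieties in class $F$ organize $M$ as a fibration over a genus-$g$ base whose fibres are reduced trees of rational curves; (b) $E\cdot F=0$, because $E$ is represented by a smoothly embedded sphere $S$, the induced map $S\to\Sigma_g$ is null-homotopic as the base is aspherical, so $E\cdot F=\deg(S\to\Sigma_g)=0$. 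I also use that $\mathcal M_E\ne\emptyset$, since an exceptional class has nonzero Gromov invariant (Taubes' $SW=Gr$).

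Given $\Theta=\sum m_iC_i\in\mathcal M_E$, from $0=E\cdot F=\sum m_i(C_i\cdot F)$ and $C_i\cdot F\ge 0$ I get $C_i\cdot F=0$ for all $i$, so each $C_i$ lies in a fibre and is therefore rational (every component of a subvariety in the $J$-nef class $F$ is rational when $g_J(F)=0$). Moreover $[C_i]\in F^{\perp}$, whose intersection form is negative semidefinite with radical $\mathbb Z F$, so $C_i^2\le 0$, with equality only if $C_i$ is a fibre. Thus the first assertion reduces to the claim that $\Theta$ has no component in class $F$, equivalently that $E-F$ is not $J$-effective --- the crux. If $E-F$ were represented by an effective $\Theta'$, a direct computation gives $g_J(E-F)=\tfrac12\big((E-F)^2+K_J\cdot(E-F)\big)+1=1$ (using $E^2+K_J\cdot E=-2$ and $E\cdot F=0$). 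But the components of $\Theta'$ are again vertical rational curves, so $\Theta'$ is an effective divisor supported in finitely many disjoint fibres; a short induction on the number of components of a single fibre --- using that its dual graph is a tree and that $t(t+1)\ge 0$ for every $t\in\mathbb Z$ --- shows that any nonzero effective divisor supported in one fibre has $g_J\le 0$, and summing over the fibres met gives $g_J(\Theta')\le 0$, a contradiction. Hence $\Theta$ has no fibre component, every $C_i^2<0$, and in fact $E-kF$ is not $J$-effective for any $k\ge 1$.

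For uniqueness, let $\Theta_1,\Theta_2\in\mathcal M_E$ and let $\Theta_0$ be their common part; then $\Theta_1-\Theta_0$ and $\Theta_2-\Theta_0$ are effective, share no component, and both represent $A:=E-[\Theta_0]$. Positivity of intersections gives $A^2=(\Theta_1-\Theta_0)\cdot(\Theta_2-\Theta_0)\ge 0$, while $A\in F^{\perp}$ forces $A^2\le 0$; so $A^2=0$, $A=cF$ with $c\ge 0$, and if $c\ge 1$ then $\Theta_0$ is a nonzero effective divisor of class $E-cF$, so adding generic fibres shows that $E-F$ is $J$-effective --- contradiction. Therefore $c=0$, $A=0$, and $\Theta_1=\Theta_0=\Theta_2$. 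The one genuinely delicate step is the non-effectivity of $E-F$: this is exactly the phenomenon that fails in rational surfaces, where $-K_J$ can move in a pencil and produce the exotic reducible representatives of exceptional classes, and in the irrational case it must be extracted from $E\cdot F=0$ together with the rigidity of divisors supported in fibres; everything else is bookkeeping with the genus formula and positivity of local intersections, granted the structure of $\mathcal M_F$ recalled above.
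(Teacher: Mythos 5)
Your scheme (push every component of a representative of $E$ into a fibre, rule out fibre components by a genus count, then get uniqueness from negative semidefiniteness of $T^{\perp}$) is reasonable, but it has a genuine gap at its foundation, namely your input (a). For an \emph{arbitrary} tamed $J$ on a \emph{non-minimal} irrational ruled surface, neither the $J$-nefness of the fibre class nor the ``fibration'' package you invoke (a subvariety in the fibre class through every point, its uniqueness -- hence disjointness of fibres -- and the tree structure of reducible fibres) can be quoted from McDuff: those results cover the minimal case or generic $J$. These statements are exactly Proposition \ref{smoothT} and Theorem \ref{M_T} of the paper, and the nefness in particular is proved there by a Seiberg--Witten wall-crossing argument (if an irreducible curve had $[C]\cdot T<0$, Kneser's theorem forces $2g_J([C])-2=\dim_{SW}(-[C])\ge 0$, wall crossing then gives $SW(-[C])\ne 0$ since $SW(K+[C])=0$, and $0=[C]+(-[C])$ would be an effective class, contradicting tameness). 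So the hardest new ingredient of the theorem is assumed rather than proved. Two further points: fibres need not be \emph{reduced} (e.g.\ $T=(T-E_1-E_2)+(E_1-E_2)+2E_2$ after two blowups), though your argument does not really use reducedness; and your exclusion of fibre components rests on the claim that every nonzero effective vertical divisor has $g_J\le 0$, a Zariski-type lemma that you only gesture at (``a short induction \dots $t(t+1)\ge 0$''). In the almost complex setting this lemma needs the disjointness and tree structure just mentioned plus an actual combinatorial proof, so as written it is a second unproven step rather than bookkeeping.

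Granting those inputs, the rest of your argument is correct and genuinely different from the paper's. The paper classifies the exceptional classes on an irrational ruled surface as $E_i$ and $T-E_i$ by adjunction, adjoins to a representative of $E$ a representative of $T-E$ to obtain a reducible subvariety in the nef class $T$, and then reads off negativity of all components from the dimension bound \eqref{red-dim} (so no Zariski-type genus lemma is needed), while uniqueness comes from the cancellation argument of Lemma \ref{uniquereducible} applied to the two resulting elements of $\mathcal M_T$, using nefness of $T$ and connectedness to force $T\cdot e_{\Theta_0}>e_{\Theta_0}^2$ against positivity of intersections. Your alternative uniqueness step -- setting $A=E-[\Theta_0]\in T^{\perp}$, getting $A^2\ge 0$ from positivity of intersections of the common-component-free residuals and $A^2\le 0$ from semidefiniteness, then excluding $A=cT$ with $c\ge 1$ via non-effectiveness of $E-T$ -- is sound and attractive, but it only becomes a proof once the nefness of $T$, the existence and uniqueness of fibres, and the vertical genus bound are actually established.
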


In particular, it confirms Question 4.18 of \cite{p=h} for irrational ruled surfaces.\footnote{Question 4.18 of \cite{p=h} for other symplectic $4$-manifolds is answered affirmatively in the Appendix of \cite{CZ2}.} As other results in this paper, our statement works for an arbitrary tamed almost complex structure, this gives us much more freedom for geometric applications than a generic statement.

The first statement follows from the fact that the positive fiber class of an irrational ruled surface is  $J$-nef for any tamed $J$ (Proposition \ref{smoothT}). Here the positive fiber class is the unique $K_J$-spherical class of square $0$. Then the $J$-nefness technique in \cite{LZrc} gives the desired result. The proof of Proposition \ref{smoothT} requires a new idea. This is based on a simple observation that the adjunction number of a class $e$ is the Seiberg-Witten dimension of $-e$. When the class is not $J$-nef and the $J$-genus of the class is positive, the wall crossing formula of Seiberg-Witten theory would produce non-trivial subvarieties with trivial homology class. To summarize, this observation gives us a strategy to show certain class is $J$-nef.
We expect this observation, along with the nefness technique in \cite{LZrc, LZ-generic}, would lead to more applications. See the discussion in section 3. 

The second statement of Theorem \ref{intro1} follows from a uniqueness result of reducible subvarieties, Lemma \ref{uniquereducible}. This lemma constraints the reducible subvarieties by intersection theory of subvarieties. This is an important ingredient for almost all the results in this paper.

In fact, it follows directly from the second statement of Theorem \ref{intro1} that the $J$-holomorphic subvariety in class $E$ is connected and has no cycle in its underlying graph for any tamed $J$ by Gromov compactness, since these properties hold for the Gromov limit of smooth pseudoholomorphic rational curves.

The nefness of the positive fiber class and Lemma \ref{uniquereducible} also lead to the structure of the moduli space of a sphere class in irrational surfaces for an arbitrary tamed almost complex structure.

\begin{theorem}\label{intro2}
Let $M$ be an irrational ruled surface of base genus $h\ge 1$. Then for any tamed $J$ on $M$, 
\begin{enumerate}
\item there is a unique subvariety in the positive fiber class $T$ passing through a given point;
\item the moduli space $\mathcal M_T$ is homeomorphic to $\Sigma_h$, and there are finitely many reducible varieties;
\item every irreducible rational curve is an irreducible component of a subvariety in class $T$.
\end{enumerate}
\end{theorem}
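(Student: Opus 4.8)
The plan is to extract everything from three inputs already in hand — the $J$-nefness of $T$ (Proposition \ref{smoothT}), the control on $J$-nef sphere classes from \cite{LZrc}, and the rigidity of reducible subvarieties (Lemma \ref{uniquereducible}) — and then to run a short piece of surface topology. I would first dispose of (1). By $J$-nefness every irreducible component $C$ of a subvariety in class $T$ has $C\cdot T\ge 0$, hence $C\cdot T=0$ and, since $g_J(T)=0$, $C$ is a rational curve with $C^2\le 0$; moreover such subvarieties are connected with tree-like dual graph, by \cite{LZrc}. If two distinct members $\Theta_1,\Theta_2\in\mathcal M_T$ passed through a common point $p$, they would have to share an irreducible component, for otherwise positivity of intersections would give $\Theta_1\cdot\Theta_2\ge 1$, contradicting $T\cdot T=0$; peeling off common components and invoking Lemma \ref{uniquereducible} with the intersection data above then forces $\Theta_1=\Theta_2$. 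Existence of a member through each point follows from \cite{LZrc} (a smooth, and by automatic transversality regular, member exists, as $-K_J\cdot T=2>2g_J(T)-2$), which sweeps out an open set, together with Gromov compactness, which makes the union of all members closed. Thus (1) holds, and there is a continuous surjection $\Phi\colon M\to\mathcal M_T$ carrying $p$ to the unique member through it; its fibers are precisely the members of $\mathcal M_T$, and distinct members share no component (a point of a shared component would contradict uniqueness).

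Next I would prove (2). Applying the structure theorem for the moduli space of a sphere class — here the individual linear systems of $T$ are zero-dimensional — exhibits $\mathcal M_T$ as a compact space which is a smooth surface away from the locus of reducible members, the latter being a proper analytic subset and hence finite; over its complement $\Phi$ is a smooth $S^2$-bundle (automatic transversality again), and $\mathcal M_T$ is a closed surface, oriented by the complex orientation on the generic fibers. (The finiteness of the reducible members can also be seen a posteriori from $\chi(M)=2\chi(\mathcal M_T)+\sum_{\Theta}(\chi(\Theta)-2)$, each summand being at least $1$ for a tree of two or more spheres.) It remains to identify $\mathcal M_T$. From the long exact homotopy sequence of the $S^2$-bundle over $\mathcal M_T$ minus the reducible points, together with the fact that a neighbourhood in $M$ of each reducible member is simply connected (a plumbing of disc bundles over spheres along a tree), one gets $\pi_1(M)\cong\pi_1(\mathcal M_T)$; since $M$ is an irrational ruled surface of base genus $h$, $\pi_1(M)\cong\pi_1(\Sigma_h)$, so the closed oriented surface $\mathcal M_T$ is homeomorphic to $\Sigma_h$.

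Finally, (3) is immediate from (2). Let $C$ be any irreducible rational $J$-holomorphic curve and $p\in C$; composing $C\hookrightarrow M$ with $\Phi$ gives a continuous map $C\to\mathcal M_T\cong\Sigma_h$ whose degree, computed by intersecting $C$ with a generic smooth member, equals $C\cdot T$, which is $\ge 0$ by $J$-nefness of $T$. Because $h\ge 1$, any map from a rational curve to $\Sigma_h$ has degree $0$ (it lifts to the contractible universal cover), so $C\cdot T=0$; then $C$ meets the member $\Phi(p)$ at $p$ but $C\cdot\Phi(p)=0$, so by positivity $C$ is a component of $\Phi(p)$.

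I expect the main obstacle to be (2): promoting the pointwise uniqueness of (1) to the statement that $\mathcal M_T$ is a closed surface of the correct homeomorphism type. This is precisely where the structure theorem is indispensable — both to endow $\mathcal M_T$ with a manifold structure near reducible members and to guarantee that only finitely many members are reducible. Once that is in place, identifying $\mathcal M_T$ with $\Sigma_h$ via $\pi_1$ and deducing (3) are routine, and the $J$-nefness of $T$ together with Lemma \ref{uniquereducible} do the rest of the work in (1).
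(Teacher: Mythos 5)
Your part (1) and part (3) are essentially sound (part (1) is close to the paper's route: nefness of $T$, the peeling argument of Lemma \ref{uniquereducible}, and compactness), but part (2) contains a genuine gap, and it is exactly at the point you yourself flag as the crux. You assert that ``the structure theorem'' exhibits $\mathcal M_T$ as a closed surface which is smooth away from the reducible members, with $\Phi\colon M\to\mathcal M_T$ a smooth $S^2$-bundle over the complement. But the only structure theorem available (Theorem \ref{unobstructed}, i.e.\ Propositions 4.5/4.10 of \cite{LZ-generic}) says that $\mathcal M_{irr,T}$ is a smooth $2$-manifold and that $\mathcal M_{red,T}$ is finite; it says nothing about the local topology of $\mathcal M_T$ \emph{at} a reducible member, nor about local triviality of $\Phi$. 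Local Euclideanness of $\mathcal M_e$ at reducible subvarieties is not automatic and is not a formality: in the torus case the paper explicitly states (before Proposition \ref{MKconn}) that it does not know whether $\mathcal M_{-K}$ is locally Euclidean at the isolated reducible points, and for sphere classes on rational surfaces the identification $\mathcal M_e\cong\mathbb CP^l$ is obtained by an intersection-with-a-dual-curve argument precisely to avoid this issue. Since your $\pi_1$ computation (homotopy sequence of the $S^2$-bundle over $\mathcal M_T$ minus finitely many points, plus van Kampen at the plumbing neighbourhoods) presupposes both the closed-surface structure and the bundle structure, the identification $\mathcal M_T\cong\Sigma_h$ is unsupported as written. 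Your part (3) then inherits this dependence, since the degree argument needs $\mathcal M_T$ to be a closed aspherical surface and needs local degrees of $\Phi|_C$ to match intersection multiplicities with fibers.

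The missing ingredient, and the paper's actual mechanism, is a ``model curve'' for the moduli space: Proposition \ref{smoothsection} produces, via the wall-crossing formula ($SW(U+hT)=\pm 2^h\neq 0$) and positivity of intersections with fibers, a smooth embedded $J$-holomorphic section $C\cong\Sigma_h$ with $[C]\cdot T=1$. The map $x\mapsto\Theta_x$ from $C$ to $\mathcal M_T$ is then a bijection (surjective since $T\cdot[C]\neq 0$, injective since $T\cdot[C]=1$ and positivity), its inverse is continuous by the definition of the Gromov--Hausdorff topology, and compactness of $\mathcal M_T$ plus Hausdorffness of $C$ upgrade this to a homeomorphism --- no local analysis at reducible members is ever needed, and finiteness of $\mathcal M_{red,T}$ comes separately from \eqref{red-dim}. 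A smaller gap of the same flavour occurs in your existence argument in (1): the open--closed argument needs the union of supports of members to be \emph{open}, which is unclear at points lying only on reducible members (their components are rigid negative spheres); the paper instead gets existence through every point directly from $SW(T)\neq 0$ together with $\dim_{SW}(T)=2$. To repair your proposal you would either have to prove the local structure of $\mathcal M_T$ and of $\Phi$ near reducible members, or import the section of Proposition \ref{smoothsection} --- at which point you are back to the paper's proof.
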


Theorem \ref{intro1} and Theorem \ref{intro2}(1-2)  hold for generic tamed $J$ on general ruled surfaces regardless they are rational or not. But they hold for arbitrary tamed $J$ only in irrational case. It is likely the following version of Theorem \ref{intro2}(3) is true for general rational surfaces as well: every irreducible negative rational curve is an irreducible component of a subvariety in a sphere class of nonnegative self-intersection. 

In algebraic geometry, Theorem \ref{intro2} could be explained by the linear systems. Recall the long exact sequence 
$$\cdots \longrightarrow H^1(M, \mathcal O)\longrightarrow H^1(M, \mathcal O^*)\stackrel{c_1}{\longrightarrow} H^2(M, \mathbb Z)\longrightarrow \cdots$$
A divisor $D$ gives rise to a line bundle $L_D\in \hbox{Pic}(M)=H^1(M, \mathcal O^*)$. When $M$ is projective, the group of divisor classes modulo linear equivalence is identified with $\hbox{Pic}(M)$. The Poincar\'e-Lelong theorem says that $c_1(L_D)=PD[D]$. In our setting, we fix the class $e\in H^2(M, \mathbb Z)$ (indeed its Poincar\'e dual, but we will not distinguish them in this paper). Any line bundle $L$ with $c_1(L)=e$ would give a projective space family of effective divisors, {\it i.e.} the linear system $(\Gamma(M, L)\setminus \{0\})/ \mathbb C^*$, in the moduli space $\mathcal M_e$. The union of such projective spaces with respect to all possible line bundles with $c_1(L)=e$ is exactly $\mathcal M_e$. Two fibers of an irrational ruled surface are not linearly equivalent, since they are not connected through a family parametrized by rational curves. Hence each projective space is just a point, and the family of these spaces is parametrized by a section of the ruled surface which is diffeomorphic to $\Sigma_h$. In fact, this $\Sigma_h$ is embedded in its Jacobian which is a complex tori $T^{2h}$. Theorem \ref{intro1} could also be interpreted by the linear system, where $\mathcal M_E=\mathbb CP^0$.

When $M$ is simply connected, the long exact sequence implies the uniqueness of the line bundle with given Chern class. Hence the moduli space is always a projective space. It is very interesting to see whether it still holds for a tamed almost complex structure. The following is for rational surfaces.

\begin{theorem}\label{cpl}
Let $J$ be a tamed almost complex structure on a rational surface $M$. Suppose $e$ is a primitive class and represented by a smooth $J$-holomorphic sphere. Then $\mathcal M_e$ is homeomorphic to $\mathbb CP^l$ where $l=\max\{0, e\cdot e+1\}$.
\end{theorem}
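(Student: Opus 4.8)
The plan is to derive Theorem~\ref{cpl} from two inputs: the structure of $\mathcal M_e$ for a sphere class as a family of linear systems, and the vanishing of the first cohomology of a rational surface, with the dimension pinned down by adjunction and automatic transversality.

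First I would record that, since $e$ is represented by a smooth $J$-holomorphic sphere, the adjunction formula gives $g_J(e)=0$, equivalently $K_J\cdot e=-e\cdot e-2$, and then split into two cases. If $e\cdot e<0$ (so $l=0$), I claim $\mathcal M_e$ is the single point $\{C\}$ consisting of the given sphere: for any $\Theta=\sum_i m_iC_i\in\mathcal M_e$ we have $\sum_i m_i\,C\cdot C_i=C\cdot e=e\cdot e<0$, so by positivity of intersections some component $C_{i_0}$ must equal $C$; the displayed identity then forces $m_{i_0}=1$ and $C\cdot C_i=0$ for $i\neq i_0$, so $\Theta-C$ is an effective $J$-holomorphic cycle in the class $e-[C]=0$, hence of zero $\omega$-area and therefore empty, i.e.\ $\Theta=C$. (This is precisely the mechanism behind Lemma~\ref{uniquereducible}.) This settles the case $l=0$, where $\mathbb CP^l=\mathbb CP^0$ is a point.

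From now on assume $e\cdot e\geq0$, so $l=e\cdot e+1\geq1$. By the adjunction inequality and $g_J(e)=0$, every irreducible component of every subvariety in $\mathcal M_e$ is a smooth embedded rational curve; in particular an irreducible subvariety in class $e$ is a smooth $J$-holomorphic sphere of self-intersection $e\cdot e\geq-1$, hence automatically regular (automatic transversality in dimension four). Therefore the locus $\mathcal M_e^{\circ}\subset\mathcal M_e$ of irreducible subvarieties is a nonempty open smooth manifold, and the standard index count for unparametrized $J$-holomorphic spheres, using $K_J\cdot e=-e\cdot e-2$, gives it real dimension $2(e\cdot e+1)=2l$; also $\mathcal M_e$ is compact by Gromov compactness, and its reducible members are tightly constrained by intersection theory via Lemma~\ref{uniquereducible} and positivity of intersections. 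Now I would invoke the structure theorem for the moduli space of a sphere class: $\mathcal M_e$ is a union of projective-space linear systems indexed by the ``line bundles with Chern class $e$'', i.e.\ by the fibres of a map to the Picard analogue built from $H^1(M,\mathcal O)$. Since $M$ is a rational surface it is simply connected, so $H^1$ vanishes and there is a single linear system; hence $\mathcal M_e$ is homeomorphic to one complex projective space $\mathbb P(V)$. Comparing with the open dense submanifold $\mathcal M_e^{\circ}$ of real dimension $2l$ gives $\dim_{\mathbb C}\mathbb P(V)=l$, and therefore $\mathcal M_e\cong\mathbb CP^l$. As a cross-check, the dimension can be pinned down directly: through $l$ generic points of $M$ there is exactly one member of $\mathcal M_e$ and it is a smooth sphere---``exactly one'' because all such curves are regular with positive contribution and the Gromov--Witten invariant of $e$ equals $1$---while $l+1$ generic points lie on no member.

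The main obstacle is the structure theorem itself: producing linear-system coordinates on $\mathcal M_e$ for a genuinely non-integrable tamed $J$, and in particular showing that the reducible and non-reduced subvarieties fit onto $\mathcal M_e^{\circ}$ so as to form a genuine $\mathbb CP^l$ with no extra boundary strata. This is where Lemma~\ref{uniquereducible} and positivity of intersections control the reducible members, Gromov compactness supplies the necessary limits, and automatic transversality guarantees that $\mathcal M_e^{\circ}$ is an honest $2l$-dimensional manifold that is dense of the correct dimension. It is worth emphasizing that the hypotheses---$e$ primitive and represented by a smooth $J$-holomorphic sphere---are essential: they exclude exactly the exotic exceptional-class phenomena recalled in the introduction, such as higher-genus components and loops in the underlying graph, and force $\mathcal M_e$ to be a single linear system rather than a more complicated stratified space; without them $\mathcal M_e$ need not even be a manifold near a generic point.
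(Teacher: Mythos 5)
Your case $e\cdot e<0$ is fine, and your observations that $e$ is $J$-nef when $e\cdot e\ge 0$, that all components of members of $\mathcal M_e$ are then rational curves, and that $\mathcal M_{irr,e}$ is a smooth $2l$-dimensional manifold, all match the paper's setup (Theorem \ref{spheresphere}, Theorem \ref{unobstructed}). But the central step of your argument is circular: you ``invoke the structure theorem for the moduli space of a sphere class'' saying $\mathcal M_e$ is a union of projective-space linear systems indexed by line bundles with $c_1=e$, and then use $H^1(M,\mathcal O)=0$ to conclude there is a single $\mathbb CP^l$. That linear-system decomposition is an integrable-complex-structure statement; for a general tamed, non-integrable $J$ it is exactly what Theorem \ref{cpl} is asserting and what must be proved --- the introduction explicitly poses it as the open question the theorem addresses. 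You acknowledge this (``the main obstacle is the structure theorem itself'') but then only gesture at Lemma \ref{uniquereducible}, positivity, compactness and transversality without constructing anything: knowing that $\mathcal M_{irr,e}$ is an open $2l$-manifold and that there is a unique member through $l$ generic points does not identify the compact space $\mathcal M_e$ with $\mathbb CP^l$, and the appeal to the Gromov--Witten invariant being $1$ for uniqueness is precisely the kind of reasoning the paper warns is unreliable for non-generic $J$.

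What is actually missing is the paper's construction of a model for $\mathcal M_e$: using primitivity of $e$, Lemma \ref{H_enef} produces a $J$-nef class $H_e$ with $g_J(H_e)=0$ and $H_e\cdot e=1$ (this is a delicate cone argument involving the Cremona classes $H$ and $2H-E_1-E_2$ and the tameness of $J$, and it is the only place primitivity is used); then $e'=e+H_e$ is $J$-nef, spherical, with $e'\cdot e=l$, and Seiberg--Witten wall-crossing plus unobstructedness give a smooth rational curve $S$ in class $e'$ that can never be a component of a member of $\mathcal M_e$. The homeomorphism is then $h:\hbox{Sym}^l S\cong\mathbb CP^l\to\mathcal M_e$, sending an $l$-tuple on $S$ (allowing repeated points, i.e.\ prescribed contact orders with the matching curve $S$) to the unique member of $\mathcal M_e$ through it; the uniqueness is a refinement of Lemma \ref{uniquereducible} adapted to these ``second type'' tangency conditions, and the continuity of $h^{-1}$ plus compactness of $\mathcal M_e$ and Hausdorffness of $\hbox{Sym}^l S^2$ finish the proof. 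None of these ingredients --- the dual curve, the $\hbox{Sym}^l$ parametrization, or the uniqueness through points with multiplicities on $S$ --- appears in your proposal, so as written it does not prove the theorem.
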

In particular, it partially confirms Question 5.25 in \cite{LZ-generic}. Here, $M$ is called a rational surface if it is diffeomorphic to $S^2\times S^2$ or $\mathbb CP^2\#k\overline{\mathbb CP^2}$. We remark that even the connectedness of the moduli spaces $\mathcal M_e$ appearing in Theorems \ref{intro2} and \ref{cpl} was not known.

For the proof of the result, we view $\mathbb CP^l$ as $\hbox{Sym}^lS^2$, the $l$-th symmetric product of $S^2$. There are two main steps in the argument. First we need to find a ``dual" smooth $J$-holomorphic rational curve in a class $e'$ whose pairing with $e$ is $l$. This is achieved by a delicate homological study of $J$-nef classes and techniques from \cite{LZ-generic}. Hence the intersection of elements in $\mathcal M_e$ with this rational curve would give elements of $\hbox{Sym}^lS^2$. Then a refined version of Lemma \ref{uniquereducible} would give us the desired identification.

The only possible non-primitive sphere classes are Cremona equivalent to a double line class in $\mathbb CP^2\#k\overline{\mathbb CP^2}$. Here Cremona equivalence refers to the equivalence under the group of diffeomorphisms preserving the canonical class $K_J$. In this case, we can still show the connectedness of the moduli space and its irreducible part (Proposition \ref{connected}). The connectedness is important in the study of symplectic isotopy problem. More interestingly, a potential generalization of our argument for Theorem \ref{cpl} leads us to a larger framework which generalizes certain part of the elliptic curve theory. In particular, a non-associative (because of the failure of Cayley-Bacharach theorem for a non-integrable almost complex structure) addition is introduced to measure the deviation from the integrability.  

On the other hand, some arguments and techniques in this paper and that of \cite{LZrc, LZ-generic} could be extended to study moduli space of subvarieties in higher genus classes, in particular, tori or classes with $g_J(e)=1$. In this paper, we focus our discussion on the anti-canonical class of $\mathbb CP^2\#8\overline{\mathbb CP^2}$. We are able to show the following:

\begin{theorem}\label{introtori}
If there is an irreducible (singular) nodal curve in $\mathcal M_{-K}$, then $\mathcal M_{smooth, -K}$ and $\mathcal M_{-K}$ are both path connected.
\end{theorem}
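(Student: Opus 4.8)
The plan is to realize $\mathcal M_{-K}$ and $\mathcal M_{smooth,-K}$ as targets of natural maps whose fibers and base are connected, starting from the hypothesized irreducible nodal curve $C_0 \in \mathcal M_{-K}$. Since $-K$ is the anticanonical class of $\mathbb CP^2\#8\overline{\mathbb CP^2}$, we have $(-K)^2 = 1$ and $g_J(-K) = 1$, so the generic member is expected to be a smooth torus and the expected dimension of the moduli space is one (complex), i.e. $(-K)\cdot(-K) = 1$ real dimension... more precisely the Seiberg--Witten / adjunction count gives a one complex dimensional family. First I would fix an auxiliary smooth $J$-holomorphic rational curve in a class $e'$ pairing suitably with $-K$ (an exceptional class $E$ with $-K\cdot E = 1$ exists and is represented by an embedded $J$-sphere for generic $J$; for arbitrary tamed $J$ one uses the uniqueness/nefness machinery, Theorem \ref{intro1} and Lemma \ref{uniquereducible}, together with the fiber-nefness Proposition \ref{smoothT} of an appropriate ruled structure after blowing down). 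Intersecting members of $\mathcal M_{-K}$ with this curve gives a point of $\mathrm{Sym}^1 S^2 \cong S^2$, defining an evaluation-type map $\mathcal M_{-K} \to S^2$ analogous to the one used for Theorem \ref{cpl}.

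The key steps, in order, would be: (1) Use the irreducible nodal curve $C_0$ to show that $\mathcal M_{smooth,-K}$ is nonempty — smooth the node, which is possible because the space of $J$-holomorphic curves near an immersed nodal curve is governed by a surjective linearized operator in this expected-dimension-one situation (or invoke automatic transversality for the relevant genus/index, since a nodal rational-normalization has non-negative index contributions). (2) Show $\mathcal M_{smooth,-K}$ is path connected: any two smooth tori in class $-K$ can be joined by first moving them to intersect the fixed dual rational curve at a common point (connectedness of the $S^2$ of intersection points), then using a refined version of Lemma \ref{uniquereducible} to pin down the family through that point, reducing to connectedness of a linear-system-like fiber; the deviation from a genuine linear system is exactly the non-associative addition alluded to in the introduction, but for connectedness purposes one only needs the fiber to be path connected, not a projective space. (3) Show every point of $\mathcal M_{-K}$ (including reducible/higher-genus configurations) is a limit of smooth curves, hence $\mathcal M_{-K}$ is path connected too: a reducible subvariety in class $-K$ decomposes into components of negative self-intersection plus at most one component of non-negative square, and one runs a deformation/smoothing argument component-by-component, using Gromov compactness in reverse (gluing) and the classification of such decompositions via intersection theory.

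The main obstacle I expect is step (2)–(3): controlling the \emph{reducible} locus of $\mathcal M_{-K}$ and showing it is in the closure of $\mathcal M_{smooth,-K}$ for an \emph{arbitrary} tamed $J$. Unlike the sphere-class case, a genus-one class admits subvarieties with an elliptic-curve component or with several rational components forming a cycle in the dual graph, and smoothing such configurations is not automatic — the relevant gluing parameter space can be obstructed. The hypothesis that there exists an \emph{irreducible nodal} member is precisely what is needed to force enough transversality: it provides a ``basepoint'' in a top-dimensional stratum from which one can reach the reducible strata by explicit degenerations, and conversely each reducible configuration can be connected to a nodal one by a one-parameter family (merging two intersection points of distinct components into a node, then deforming). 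Making this reachability argument rigorous — i.e. verifying that the relevant deformation spaces are unobstructed along the chosen paths, and that no stratum is isolated — is the technical heart, and I would handle it by a careful stratification of $\mathcal M_{-K}$ by the combinatorial type of the configuration, combined with the dimension count $\dim \mathcal M_{-K} = (-K)^2 = 1$ (complex) which leaves no room for ``extra'' components and forces each reducible stratum to have a normal slice detecting a smoothing.
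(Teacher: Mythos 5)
There is a genuine gap, and it sits exactly where you locate the ``technical heart.'' Your steps (2)--(3) rest on mechanisms that do not exist for a genus-one class. Lemma \ref{uniquereducible} requires $g_J(e)=0$; for $e=-K$ one has $l_{-K}=1$ and a single point does \emph{not} determine the member of $\mathcal M_{-K}$ (the paper stresses precisely this failure), so ``pinning down the family through a common point'' does not work, and the ``linear-system-like fiber'' whose path connectedness you then need is the original problem, not a reduction of it. Your auxiliary dual sphere is also problematic: for an arbitrary tamed $J$ on $\mathbb CP^2\#8\overline{\mathbb CP^2}$ an exceptional class need not be represented by an embedded sphere at all (Proposition \ref{T2comp} and Section 6.1 give a $\mathbb CP^1$-family of members with elliptic components), and Theorem \ref{intro1} and Proposition \ref{smoothT}, which you invoke to repair this, are statements about \emph{irrational} ruled surfaces and do not transfer to this rational surface. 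Finally, step (3) — smoothing every reducible configuration by gluing — is exactly the potentially obstructed analysis you concede you cannot control; a stratification plus the dimension count $(-K)^2=1$ does not supply the missing transversality, and no argument is given that each reducible stratum ``detects a smoothing.''

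The paper's proof avoids both the evaluation map and any gluing. Its linchpin is a uniqueness statement you do not have: if $x$ is a \emph{singular} point of some $\Theta\in\mathcal M_{-K}$, then $\Theta$ is the unique member of $\mathcal M_{-K}$ through $x$ (Lemma \ref{uniquesingular}, an intersection-theoretic argument exploiting $(-K)^2=1$ and a parity count for $-2$-curve components). Singular members are finite in number: reducible ones have only $-1$ and $-2$ rational components (Proposition \ref{-Kred}), and irreducible nodal ones are isolated because Sikorav's unobstructedness gives a local chart $(\mathbb C,0)$ at any immersed member (Proposition \ref{unobstructedtori}). Given a smooth member $C$ and a singular member $\Theta$, one chooses a path of \emph{points} $P(t)$ in $M$ from a point of $C$ to a singular point of $\Theta$, avoiding the supports of all singular members for $t<1$; Seiberg--Witten theory ($l_{-K}=1$) provides a member through each $P(t)$, necessarily smooth for $t<1$, and openness (unobstructedness) plus Gromov compactness plus the uniqueness lemma at $t=1$ yield a path in $\mathcal M_{-K}$ from $C$ to $\Theta$. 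The irreducible nodal hypothesis is then used in one specific way for $\mathcal M_{smooth,-K}$: the punctured chart $\mathbb R^2\setminus\{0\}$ around the nodal curve consists entirely of smooth curves and is connected, so two paths of smooth curves limiting to it can be rerouted around the nodal point. Your proposal contains neither the uniqueness-at-a-singular-point lemma nor this rerouting device, so neither connectedness claim is established as written.
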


We hope to have a more general discussion of $J$-holomorphic tori in future work. 

Section $6$ contains a couple more applications. First, we show that the example mentioned in the beginning is actually a general phenomenon for any non-negative sphere classes. Namely, Proposition \ref{T2comp} says that some subvarieties in a sphere class of a complex surface have an elliptic curve component.  This immediately implies that no sphere class in $\mathbb CP^2\#k\overline{\mathbb CP^2}, k \ge 8$ is $J$-nef for every complex structure $J$. This should be compared with the result mentioned above that the positive fiber class of an irrational ruled surface is $J$-nef for any tamed $J$. 

The other application is on the symplectic isotopy of spheres to a holomorphic curve. This problem is first studied for plane curves, i.e. symplectic surfaces in $\mathbb CP^2$. In this case, the genus of a smooth symplectic surface is totally determined by its degree $d$. It is now known that any symplectic surface in $\mathbb CP^2$ of degree $d\le 17$ is symplectically isotopic to an algebraic curve. Chronologically, for $d=1,2$ (i.e. the sphere case) this result is due to Gromov \cite{Gr}, for $d=3$ to Sikorav \cite{Sik}, for $d\le 6$ to Shevchishin \cite{Shev} and finally $d\le 17$ to Siebert and Tian \cite{ST}.
In Theorem \ref{sphereiso}, we give an alternative proof of the fact (see {\it e.g.} \cite{LW}) that any symplectic sphere $S$ with self-intersection $S\cdot S \ge 0$ in a $4$-manifold $(M, \omega)$ is symplectically isotopic to a holomorphic rational curve.

Besides the techniques of $J$-holomorphic subvarieties, especially the $J$-nefness technique, another important ingredient in our arguments is the Seiberg-Witten theory. In particular, we use SW=Gr and wall-crossing formula frequently. They provide abundant $J$-holomorphic subvarieties when $b^+(M)=1$. As an amusing byproduct, we observe in Proposition \ref{hodge} that the corresponding statement of Hodge conjecture for tamed almost complex structure on $M$ with $b^+(M)=1$ holds. Namely, any element of $H^2(M, \mathbb Z)$ is the cohomology class of a $J$-divisor.

We would like to thank Dmitri Panov for helpful discussion which leads to the paper \cite{LP}, and Fedor Bogomolov for his interest. We are grateful to the referee for careful reading and very helpful suggestions improving the presentation. 
The work is partially supported by EPSRC grant EP/N002601/1.

\section{$J$-holomorphic subvarieties}
In this section, we recall the definition and basic properties of $J$-holomorphic subvarieties. The first two subsections are essentially from \cite{T1, LZ-generic, LZrc}. Then an useful technical lemma on the intersection of $J$-holomorphic subvarieties, Lemma \ref{uniquereducible}, is proved. Finally, after recalling the basics of Seiberg-Witten theory, we show that the almost K\"ahler Hodge conjecture holds when $b^+=1$. 

\subsection{$J$-holomorphic subvarieties}
A closed set $C\subset M$ with finite, nonzero 2-dimensional
Hausdorff measure is said to be an {\it irreducible  $J$-holomorphic subvariety} if it has
no isolated points, and if the complement of a finite set of points
in $C$, called the singular points,  is a connected smooth submanifold with $J$-invariant tangent space. Suppose  $C$ is an irreducible subvariety.
Then it is the image of a $J$-holomorphic map
$\phi:\Sigma\to  M$ from a complex connected curve $\Sigma$, where $\phi$
is an embedding off a finite set. $\Sigma$ is called the model curve and $\phi$ is called the tautological map. 
The map $\phi$ is uniquely determined up to automorphisms of $\Sigma$. 

A {\it $J$-holomorphic subvariety} $\Theta$ is a finite set of pairs $\{(C_i, m_i), 1\leq i\leq n\}$, where each  $C_i$ is irreducible 
$J$-holomorphic subvariety and each $m_i$ is a positive integer. 
The set of pairs is further constrained so that $C_i\ne C_j$ if $i\ne j$. When $J$ is understood, we
will simply call a  $J$-holomorphic subvariety a subvariety. 
They are the analogues of one dimensional subvarieties in algebraic geometry. Taubes provides a  systematic analysis of pseudo-holomorphic subvarieties in \cite{T1}.

A subvariety $\Theta=\{(C_i, m_i)\}$ is said to be connected if $\cup C_i$ is connected. We call $\Theta>\Theta_0$ if $\Theta-\Theta_0$ is another, possibly empty, subvariety.

The associated homology class $e_C$ (sometimes, we will also write it by $[C]$) is defined to be the push forward of the fundamental class of $\Sigma$ via $\phi$. 
And for a subvariety $\Theta$, the associated class $e_{\Theta}$ is defined to be $\sum m_ie_{C_i}$.

An irreducible  subvariety  is said to be {\it smooth} if it has no singular points. 
A special feature in dimension 4 is that,  by the adjunction formula,   the genus of a smooth subvariety $C$ is given  by $g_J(e_C)$. 
For a general class $e$ in $H^2(M;\mathbb Z)$, recall the $J$-genus of $e$ is defined by
$$
 g_J(e)=\frac{1}{2}(e\cdot e+K_J\cdot e)+1
$$
  where $K_J$ is the canonical class of $J$. In general, $g_J(e)$ could take any integer value. Let $\mathcal J^{\omega}$ be the space of $\omega$-tamed almost complex structures. Notice the $J$-genus is an invariant for $J\in \mathcal J^{\omega}$ since $\mathcal J^{\omega}$ is path connected and $K_J$ is invariant under deformation. Hence, later we will sometimes write $g_{\omega}(e)=g_J(e)$ when a symplectic structure $\omega$ is fixed.

Moreover, when $C$ is an irreducible subvariety,  $g_J(e_C)$ is non-negative.  In fact, by the adjunction inequality in \cite{McD},  $g_J(e_C)$ is 
bounded from below by the genus of the  model curve $\Sigma$ of $C$, with equality if and only if $C$ is smooth. Especially, when $g_J(e_C)=0$, $C$ is a smooth rational curve.

An element $\Theta$, in the moduli space $\mathcal M_e$ of subvarieties in the class $e$, is a 
subvariety with  $ e_{\Theta}=e$. $\mathcal M_e$ has a natural topology in the following Gromov-Hausdorff sense. 
 Let $|\Theta|=\cup_{(C, m)\in \Theta}C$ denote the support of $\Theta$. Consider  the symmetric, non-negative function, $\varrho$, on $\mathcal M_e\times \mathcal M_e$ that is defined by the following rule:
\begin{equation} \varrho(\Theta, \Theta')=\sup _{z\in |\Theta|} \hbox{dist}(z, |\Theta'|)
+\sup _{z'\in |\Theta'|} \hbox{dist}(z', |\Theta|).
\end{equation}
The function $\varrho$ is used to measure distances on $\mathcal M_e$, where the distance function dist($\cdot, \cdot$) is defined by an almost Hermitian metric on $(M, J)$.

Given a smooth $2$-form $\nu$ we introduce the pairing
$$(\nu, {\Theta} )=\sum_{(C, m)\in \Theta} m\int_{C}\nu.$$

The topology on $\mathcal M_e$ is defined in terms of convergent sequences:

A sequence $\{\Theta_k\}$ in $\mathcal M_e$ converges to a given element $\Theta$ if the following two conditions are met:

\begin{itemize}
\item  $\lim_{k\to \infty} \varrho (\Theta, \Theta_k)=0$.

\item  $\lim_{k\to \infty} (\nu, \Theta_k)=(\nu, \Theta)$ for any given smooth 2-form $\nu$.
\end{itemize}

That the moduli space $\mathcal M_e$ is compact is an application of Gromov compactness, see Proposition 3.1 of \cite{T1}.

\begin{definition}\label{eff}
A  homology class $e\in H_2(M; \mathbb Z)$ is said to be  
$J$-effective if $\mathcal M_e$ is nonempty. 
\end{definition}

We use $\mathcal M_{irr, e}$  to denote the moduli space of irreducible subvarieties in class $e$. Let $\mathcal M_{red,e}:=\mathcal M_e \setminus \mathcal M_{irr,e}$. 

Given a class $e$, its $J$-dimension is
\begin{equation} \label{l}  \iota_e=\frac{1}{2}(e\cdot e-K_{J}\cdot e). 
\end{equation}

The integer $\iota_e$ is the expected (complex) dimension of the moduli space $\mathcal M_e$. When $g_J(e)=0$, we have $\iota_e=e\cdot e+1$.
When $e$ is a class  represented by a smooth  rational curve ({\it i.e.} $J$-holomorphic sphere),
we introduce 
$$l_e=\max\{\iota_e, 0\}.$$

Given a $k(\le {l_e})$-tuple of distinct points $\Omega$, recall that $\mathcal M_e^\Omega$ is the space of subvarieties in $\mathcal M_e$  that contains all entries of $\Omega$. Introduce similarly $\mathcal M_{irr,e}^{\Omega}$ and $\mathcal M_{red,e}^{\Omega}$.  We will often drop the subscript $e$ when there is no confusion.

\subsection{$J$-nef classes}\label{secnef}
In general, all these moduli spaces could behave wildly. The notion of $J$-nefness provides good control as shown in \cite{LZrc, LZ-generic}. 

A class $e$ is said to be {\it $J$-nef} if it pairs  non-negatively with any $J$-holomorphic subvariety. When there is a $J$-holomorphic subvariety in a $J$-nef class $e$, {\it i.e.} $e$ is also effective, we have $e\cdot e\ge 0$. A $J$-nef class $e$ is said to be {\it big} if $e\cdot e>0$. The vanishing locus $Z(e)$ of a big $J$-nef class $e$  is the union of irreducible subvarieties $D_i$ such that $e\cdot e_{D_i}=0$. Denote  the complement of the vanishing locus of $e$  by $M(e)$. From the definition and  the positivity of intersections of distinct irreducible subvarieties \cite{Gr, MW}, it is clear that there does not exist an irreducible subvariety in class $e$ passing through $x\in Z(e)$ when $e$ is big and $J$-nef.

If the support $|C|=\cup C_i$ of subvariety $\Theta=\{(C_i, m_i)\}$ is connected, then Theorem 1.4 of \cite{LZrc} says that  \begin{equation}\label{genus bound} g_J(e)\ge \sum_i g_J(e_{C_i})
 \end{equation}
for a $J$-nef class $e$ with $g_J(e)\ge 0$. In this paper, we use the following result which follows from the above genus bound and is read from Theorem 1.5 of \cite{LZrc}.
\begin{theorem}\label{spheresphere}
Suppose $J$ is tamed by some symplectic structure, e is a $J$-nef class with $g_J(e)=0$ and $\Theta\in \mathcal M_e$. Then $\Theta$ is connected and each irreducible component  of $\Theta$  is a smooth rational curve.
\end{theorem}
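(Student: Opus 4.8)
The statement has two assertions: connectedness of $\Theta$, and that each irreducible component of $\Theta$ is a smooth rational curve. The plan is to reduce everything to connectedness. Indeed, once we know $|\Theta|=\bigcup_i C_i$ is connected, the genus bound \eqref{genus bound} applies (with $g_J(e)=0\ge 0$) and gives $0=g_J(e)\ge\sum_i g_J(e_{C_i})$; since $g_J(e_{C_i})\ge 0$ for each irreducible $C_i$ by the adjunction bound recalled above, every summand must vanish, and an irreducible subvariety $C_i$ with $g_J(e_{C_i})=0$ is a smooth rational curve. Thus the whole content is the connectedness of an arbitrary $\Theta\in\mathcal M_e$.

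I would prove connectedness by contradiction. If $|\Theta|$ is disconnected, group the irreducible components of $\Theta$ according to the connected components of $|\Theta|$ and write $\Theta=\Theta_1+\cdots+\Theta_r$ with $r\ge 2$ and the supports $|\Theta_a|$ pairwise disjoint. Set $e_a=e_{\Theta_a}$, so $e=\sum_a e_a$; disjointness of supports gives $e_a\cdot e_b=0$ for $a\ne b$, hence $e\cdot e_a=e_a^2$, and $J$-nefness of $e$ forces $e_a^2\ge 0$ for all $a$. Each $\Theta_a$ is a nonempty subvariety, so $\omega\cdot e_a>0$. Expanding $g_J$ and using the orthogonality of the $e_a$ yields the elementary relation $g_J(e)=\sum_{a=1}^r g_J(e_a)-(r-1)$, so $\sum_a g_J(e_a)=r-1\ge 1$; recall also that $g_J$ is integer valued since $K_J$ is characteristic.

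When $b^+(M)=1$—which covers the ruled and rational surfaces where the theorem is applied—one finishes elementarily. By the Hodge index theorem, if some $e_a^2>0$ then its orthogonal complement is negative definite, forcing every other $e_b$ (which is orthogonal to $e_a$, has $e_b^2\ge 0$, and is nonzero since $\omega\cdot e_b>0$) to vanish, which is impossible; so all $e_a^2=0$. Two null classes of zero mutual product are proportional in signature $(1,n)$, so all $e_a$ are positive integral multiples of one primitive null class $f$ with $\omega\cdot f>0$, say $e=mf$ with $m\ge r\ge 2$. Then $g_J(e)=\tfrac12 m\,(K_J\cdot f)+1=0$ gives $m\,(K_J\cdot f)=-2$, whereas integrality of $g_J(f)=\tfrac12(K_J\cdot f)+1$ forces $K_J\cdot f$ to be even; these are incompatible for $m\ge 2$, the desired contradiction. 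For general $b^+(M)$, precluding such a decomposition is exactly the finer part of Theorem 1.5 of \cite{LZrc}, resting on the genus bound \eqref{genus bound} and the positivity of Gromov/Seiberg--Witten invariants. I expect this to be the one genuinely hard point, since the bare intersection form does not by itself rule out disjoint disconnected representatives; everything else is a formal consequence of \eqref{genus bound}.
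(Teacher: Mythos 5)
Your proposal is correct, and it in fact supplies more argument than the paper itself does: in the paper, Theorem \ref{spheresphere} is not proved at all but is simply read off from Theorem 1.5 of \cite{LZrc}, with the remark that it follows from the genus bound \eqref{genus bound}. Your reduction of the second assertion to connectedness is exactly that remark made precise: once $|\Theta|$ is connected, \eqref{genus bound} together with $g_J(e_{C_i})\ge 0$ and the fact that an irreducible subvariety with $g_J=0$ is a smooth rational curve forces every component to be smooth rational. Where you genuinely diverge is the connectedness step. The paper outsources it entirely to \cite{LZrc}; you give a self-contained argument when $b^+(M)=1$: splitting $\Theta$ into connected pieces with classes $e_a$, using disjointness to get $e_a\cdot e_b=0$ and nefness to get $e_a^2=e\cdot e_a\ge 0$, then the light cone lemma to rule out $e_a^2>0$ and to force all $e_a$ to be positive multiples of a single primitive null class $f$ (torsion being irrelevant to the numerics), and finally the parity of $K_J\cdot f$ (as $K_J$ is characteristic) against $m(K_J\cdot f)=-2$ with $m\ge 2$. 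I checked this chain and it is sound, and it covers every situation in which the theorem is actually used in this paper, since rational and ruled surfaces have $b^+=1$; this is a nice, purely lattice-theoretic substitute for the citation in that range. For general $b^+$ you fall back on exactly the same reference the paper relies on, so relative to the paper there is no gap --- though strictly speaking neither you nor the paper proves the statement in full generality here; that content lives in \cite{LZrc}.
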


Moreover, when $e$ is $J$-nef and $J$-effective with $g_J(e)=0$, we have the following strong bound for the expected dimension of curve configuration for $\Theta\in \mathcal M_{red, e}$ (Lemma 4.10 in \cite{LZrc})
\begin{equation}\label{red-dim}
\sum_{(C_i, m_i)\in \Theta}l_{e_i}\leq\sum_{(C_i, m_i)\in \Theta} m_i l_{e_i}\leq l_e-1.
\end{equation}

Along with automatic transversality, we have the following which is extracted from Proposition 4.5 and Proposition 4.10 of \cite{LZ-generic}.
\begin{theorem}\label{unobstructed}
Suppose $e$ is a $J$-nef spherical class with $e\cdot e\ge 0$. Then $\mathcal M_{irr, e}$ is a non-empty smooth manifold of dimension $2l_e$ and $\mathcal M_{red, e}$ is a finite union of compact manifolds, each with dimension at most $2(l_e-1)$.
\end{theorem}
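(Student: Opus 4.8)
The plan is to combine automatic transversality for $J$-holomorphic spheres with the genus bound and dimension bound already recorded in this section. First I would treat $\mathcal M_{irr,e}$. Since $e$ is a $J$-nef spherical class with $e\cdot e\geq 0$, Theorem \ref{spheresphere} applied to any $\Theta\in\mathcal M_e$ shows that any subvariety is connected with smooth rational components; in particular if $\Theta$ is irreducible it is a single smooth $J$-holomorphic sphere in class $e$. The first task is therefore nonemptiness: I would invoke the Seiberg--Witten machinery (SW $=$ Gr together with the wall-crossing formula, available since $b^+=1$ for the manifolds in question, or more directly the existence results for sphere classes quoted from \cite{LZ-generic}) to produce at least one such curve. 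Smoothness and the dimension count then come from Gromov's automatic transversality in dimension four: for an embedded $J$-holomorphic sphere $C$ in class $e$ with $e\cdot e\geq 0$ one has $c_1(e)=e\cdot e+2>0$ and the relevant operator is automatically surjective, so $\mathcal M_{irr,e}$ is a smooth manifold near $[C]$ of the expected real dimension $2\iota_e=2(e\cdot e+1)=2l_e$ (using $g_J(e)=0$, so $\iota_e=l_e\geq 1$). Since this holds at every point, $\mathcal M_{irr,e}$ is a smooth $2l_e$-manifold.

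Next I would handle $\mathcal M_{red,e}$. By Theorem \ref{spheresphere}, each $\Theta\in\mathcal M_{red,e}$ has the form $\{(C_i,m_i)\}$ with all $C_i$ smooth rational curves and $\sum m_i[C_i]=e$, and there are only finitely many possible decompositions of $e$ into effective classes with $g_J(e_{C_i})=0$ (this finiteness is a standard consequence of $\omega$-tameness: the symplectic areas of the $C_i$ are bounded by $\omega\cdot e$ and there are finitely many classes of bounded area, and positivity of intersections bounds the multiplicities). For each fixed combinatorial type, the stratum of $\mathcal M_{red,e}$ of that type fibers over the product $\prod_i\mathcal M_{irr,e_i}$; by the $e\cdot e_{C_i}\geq 0$ part of $J$-nefness each $e_{C_i}$ is itself $J$-effective with $g_J=0$, and automatic transversality again makes each $\mathcal M_{irr,e_i}$ a smooth manifold of dimension $2l_{e_i}$. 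The total dimension of such a stratum is thus $\sum_i 2 l_{e_i}$ (the matching conditions forcing the components to form a connected configuration in class $e$ are, by positivity of intersections and the connectedness in Theorem \ref{spheresphere}, of codimension zero on the top stratum, or contribute to lower strata), and inequality \eqref{red-dim} gives $\sum_i l_{e_i}\leq l_e-1$, hence dimension at most $2(l_e-1)$. Compactness of each stratum's closure follows from Gromov compactness applied inside $\mathcal M_e$ together with the finiteness of combinatorial types, and taking the union over the finitely many types gives the statement.

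I would expect the main obstacle to be organizing the reducible stratification cleanly: one must check that for each combinatorial type the reducible locus is genuinely a manifold (not just of the expected dimension as a set), which requires knowing that the intersection/incidence conditions gluing the components together are cut out transversally, and that moving between strata does not raise dimension. The bound \eqref{red-dim} from \cite{LZrc} is the crucial input that controls the sum $\sum l_{e_i}$, and the delicate point is matching it against the actual dimension of each stratum — in particular ruling out that imposing connectedness of $\cup C_i$ costs more than the bookkeeping allows. Since the excerpt tells us this is precisely the content extracted from Proposition 4.5 and Proposition 4.10 of \cite{LZ-generic}, I would lean on those propositions for the transversality of the reducible strata and present the argument above as the conceptual outline, with Theorem \ref{spheresphere}, automatic transversality, and \eqref{red-dim} as the three pillars.
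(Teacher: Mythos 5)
The paper itself gives no proof of this theorem: it is stated as an extraction from Propositions 4.5 and 4.10 of \cite{LZ-generic}, with the explicit remark that Sikorav-type unobstructedness already yields smoothness of $\mathcal M_{irr,e}$ once it is nonempty, and that the real content is the nonemptiness of $\mathcal M_{irr,e}$ for a merely $J$-nef class. Measured against that, your outline gets the soft parts right --- automatic transversality gives smoothness and dimension $2l_e$ at any embedded sphere (since $c_1(e)=e\cdot e+2\ge 2$), Theorem \ref{spheresphere} reduces reducible elements to configurations of smooth rational curves, and \eqref{red-dim} caps the reducible dimension by $2(l_e-1)$ --- but it has a genuine gap exactly at the step the paper singles out as the main point. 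Invoking SW$=$Gr and wall-crossing only produces a $J$-holomorphic subvariety in class $e$, possibly reducible and non-reduced; it does not produce an irreducible curve, and for a non-generic tamed $J$ there is no a priori reason the irreducible stratum is nonempty (this is precisely why the paper stresses ``our main contribution is to show $\mathcal M_{irr,e}\neq\emptyset$''). The missing argument, which is the content of Proposition 4.5 of \cite{LZ-generic}, is a dimension count: nonvanishing of the SW/Gromov--Taubes invariant (itself needing wall-crossing plus nefness to kill $SW(K-e)$) gives a subvariety in class $e$ through any generic $l_e$-tuple of points, while \eqref{red-dim} shows a reducible subvariety can pass through at most $l_e-1$ generic points, so some representative through a generic tuple must be irreducible. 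Deferring to \cite{LZ-generic} at the end is legitimate --- it is what the paper does --- but as a self-contained argument your sketch would fail at this step.

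A smaller inaccuracy: in the reducible part you assert that automatic transversality makes every $\mathcal M_{irr,e_i}$ a smooth manifold of dimension $2l_{e_i}$. Automatic transversality requires $c_1(e_i)=e_i\cdot e_i+2>0$, so it fails for components with $e_i\cdot e_i\le -2$, which do occur (for instance $-2$-spheres in the ruled case). For such classes the correct mechanism is positivity of intersections: two distinct irreducible curves in a class of negative square would have negative intersection, so $\mathcal M_{irr,e_i}$ is at most a single point, and since $l_{e_i}=0$ the bound $2(l_e-1)$ is unaffected; but the reasoning is rigidity, not transversality. Your worry about whether the connectedness/incidence conditions cut the reducible strata transversally is largely moot, since connectedness is automatic for any subvariety in a $J$-nef spherical class by Theorem \ref{spheresphere}; the actual delicacy in \cite{LZ-generic} lies in the finiteness of configuration types and the compactness of each piece, which you correctly attribute to tameness and Gromov compactness.
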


This is an unobstructedness result for the deformation of symplectic surfaces. In \cite{Sik}, an unobstructed result is obtained. In our circumstance, it implies that when $\mathcal M_{irr, e}\neq \emptyset$, it is a smooth manifold. Hence, our main contribution is to show $\mathcal M_{irr, e}\neq \emptyset$ when $e$ is $J$-nef. It is important for our applications, since we will deform $J$ in $\mathcal J^{\omega}$ and the irreducible part of moduli space need not to be nonempty {\it a priori}. Our result for $\mathcal M_{red, e}$ is more general since \cite{Sik} need each component of $\Theta$ has multiplicity one and has self-intersection no less than $-1$.

\subsection{Intersection of subvarieties}\label{intersection}
We first analyze how an intersection point contributes to the intersection number of two subvarieties. Since every component of a subvariety is an irreducible curve, the intersection number will always contribute positively. 

There are two typical types of intersections. The first is when two multiple components $(C, n)$ and $(C', m)$ have an intersection point $p$. If the two irreducible curves $C$ and $C'$ intersect at $p$ transversally, then the point $p$ contributes $mn$ to the intersection numbers. The second type is when two curves $C$ and $C'$ have high contact order at $p$. If they are tangent to each other at order $n$, which means the local Taylor expansion coincides up to order $n-1$, then $p$ would contribute $n$ to the intersection number. Notice only the local behavior of the two curves matters for the intersection near $p$. Hence the two types could interact simultaneously. 

\begin{example}
Suppose $\Theta$ is a subvariety with two irreducible components $(C_1,m_1)$ and $(C_2, m_2)$, which intersect transversally at point $p$, and $\Theta'$  is another subvariety with a component $(C', m)$, passing through $p$ and tangent to $C_1$ of order $n$ at $p$. The point $p$ would contribute $nmm_1+mm_2$ to the intersection of two subvarieties $\Theta$ and $\Theta'$. 
\end{example}

Later in this paper, we will see in several occasions to prescribe a subvariety passing through given ``points with weight", which will be explained immediately. Corresponding to the above two types of intersections of subvarieties, there are two types of points with weight. The first type, denote by $(x, d)$ with $x\in M$ and $d\in \mathbb Z$, means the subvariety $\Theta$ passes through point $x$ with multiplicity $d$. Since no direction or higher order contact is given, the multiplicity here is the sum of weights of all irreducible components of $\Theta$ passing through $x$, say $(C_1, m_1), \cdots, (C_k, m_k)$, {\it i.e.} $d=m_1+\cdots+m_k$. 

The second type, denote by $(x, C, d)$ with  $x\in M$, $d\in \mathbb Z$ and $C$ a (local) $J$-holomorphic curve passing through $x$, means subvariety $\Theta$ passes through point $x$ with multiplicity $d$ counted with contact orders with $C$. Precisely, if locally there are local components of $\Theta$, say $(C_1, m_1), \cdots, (C_k, m_k)$ passing through point $x$ and tangent to the curve $C$ with order $d_1, \cdots, d_k$ respectively, then $d=d_1m_1+\cdots +d_km_k$. Here we implicitly assume $C$ is of multiplicity one. In the most general case, we consider $(C, n)$, and the corresponding relation is $d=n(d_1m_1+\cdots +d_km_k)$. Sometimes, we call $C$ the ``matching" curve at point $x$.

The following strengthens Lemma 4.18 in \cite{LZ-generic}, considering the first type intersection. 

\begin{lemma}\label{uniquereducible}
Let $J$ be an almost complex structure on $M^4$. Suppose $e$ is $J$-nef with 
$l=\max\{e\cdot e+1, 0\}$ and $\{(x_1, d_1), \cdots, (x_k, d_k)\}$ are points with weight. 
\begin{enumerate}
\item Suppose two subvarieties $\Theta, \Theta'\in \mathcal M_e$ do not share irreducible components. If they both pass through these points with weight, then $d_1+\cdots+d_k< l$.
\item Let $\Theta=\{(C_i, m_i)\}\in \mathcal M_e$ be a connected subvariety passing through these points with weight such that there are at least $m_ie\cdot e_{C_i}$ points (counted with multiplicities) on $C_i$ for each $i$ and all $x_i$ are smooth points. Then there is no other such subvariety in class $e$ that shares an irreducible component with $\Theta$. 
\end{enumerate}

\end{lemma}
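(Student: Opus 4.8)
The plan is to exploit the genus bound and the dimension-counting inequality \eqref{red-dim} together with positivity of intersections, running separate arguments for the two parts but keeping the same engine. For part (1), suppose to the contrary that $\Theta,\Theta'\in\mathcal M_e$ both pass through $\{(x_1,d_1),\dots,(x_k,d_k)\}$ with $d_1+\cdots+d_k\ge l$. The key device is to produce, from the pair $(\Theta,\Theta')$, a genuine subvariety in a \emph{smaller} class and derive a contradiction with $J$-nefness or effectiveness. Concretely, I would pass to the common refinement: write $\Theta=\{(C_i,m_i)\}$ and $\Theta'=\{(C'_j,m'_j)\}$, let $\Theta_0$ be the ``gcd'' subvariety built from the components (with multiplicities) that $\Theta$ and $\Theta'$ share, and set $\Psi=\Theta-\Theta_0$, $\Psi'=\Theta'-\Theta_0$, which are subvarieties (possibly empty) with no common component and with $e_\Psi=e_{\Psi'}=:f$, so $e=e_{\Theta_0}+f$. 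If $\Psi$ and $\Psi'$ are both non-empty, positivity of intersections forces $f\cdot f=[\Psi]\cdot[\Psi']\ge$ (the contribution of the prescribed points not absorbed by $\Theta_0$), while on the other hand $g_J(e)=0$ and $J$-nefness of $e$ control $f\cdot f$ and $\iota_f$ from above via \eqref{genus bound} and \eqref{red-dim}. Balancing these — the prescribed weight $\sum d_i$ must be spent either inside the common part $\Theta_0$ (where it is bounded by $\sum_i l_{e_{C_i}}\le l_e-1$ through \eqref{red-dim}, since $\Theta_0$ is a proper sub-configuration of a reducible element of $\mathcal M_e$) or against $f$ (where it is bounded by $f\cdot f$, itself $\le l_e-1$ by a similar count) — yields $\sum d_i\le l-1$, contradicting the assumption. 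If instead $\Psi$ or $\Psi'$ is empty then $\Theta=\Theta'$ up to the shared part and a direct application of \eqref{red-dim} to the reducible element finishes it.

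For part (2), the hypothesis is that $\Theta=\{(C_i,m_i)\}$ is connected, passes through the points with weight, each $x_i$ is a smooth point of $\Theta$, and — crucially — $C_i$ already meets the prescribed data in at least $m_i\,e\cdot e_{C_i}$ points counted with multiplicity. Suppose $\Theta''\in\mathcal M_e$ shares an irreducible component, say $C_1$, with $\Theta$ and also passes through the points. Again form $\Theta_0=\gcd(\Theta,\Theta'')$, which contains $(C_1,\min(m_1,m_1''))$, and write $\Psi=\Theta-\Theta_0$, $\Psi''=\Theta''-\Theta_0$ with $e_\Psi=e_{\Psi''}=f$. Now I compute $[\Psi]\cdot e_{C_1}$ two ways. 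Since $C_1\not\subset|\Psi|$ (it lies in $\Theta_0$), positivity gives $[\Psi]\cdot e_{C_1}\ge 0$; but connectedness of $\Theta$ and the ``at least $m_i e\cdot e_{C_i}$ points on $C_i$'' hypothesis should be arranged to say that all of $C_1$'s intersection budget $m_1 e\cdot e_{C_1}$ with the rest of $\Theta$ is already used up by the prescribed points \emph{and} the components in $\Theta_0$, leaving no room for $\Psi''$ to meet $C_1$ unless $\Psi''$ (equivalently $f$) is trivial. Iterating this along the connected graph of $\Theta$ — each component forced into $\Theta_0$ drags in its neighbours because the intersection numbers are saturated — propagates to show $\Theta_0=\Theta$, hence $\Theta''\ge\Theta$, hence by degree $\Theta''=\Theta$, so no such distinct subvariety exists.

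The main obstacle I anticipate is the bookkeeping in part (2): making precise the statement that the prescribed points together with the forced components ``saturate'' the adjunction/intersection budget $\sum_i m_i e\cdot e_{C_i}$ of each $C_i$, and then propagating saturation across the connected graph without losing track of where the multiplicities $m_i$ and the contact contributions at the $x_i$ are accounted. One has to be careful that a prescribed point $x_j$ lying on several components of $\Theta$ is not double-counted, which is exactly why the hypothesis insists the $x_j$ are \emph{smooth} points of $\Theta$ — so each $x_j$ sits on a unique $C_i$. A secondary technical point is handling multiplicities: when $m_1''<m_1$ the difference $(C_1,m_1-m_1'')$ sits in $\Psi$, and one must check the intersection estimates still close; I expect Theorem \ref{spheresphere} (every component of an element of $\mathcal M_e$ is a smooth rational curve) plus the numerical bound \eqref{red-dim} to be exactly what rules out the leftover cases. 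Once the saturation mechanism is set up cleanly, parts (1) and (2) both reduce to the same inequality $\sum d_i\le l-1$ obtained by distributing the weight between $\Theta_0$ and $f$.
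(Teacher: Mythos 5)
Your overall decomposition---cancel the shared components of the two subvarieties to obtain residual parts $\Psi,\Psi''$ with no common component and the same class---is exactly the paper's starting move, but the engine you run afterwards does not close, in either part. For part (1) the paper needs nothing beyond positivity of intersections: the weighted points contribute at least $d_1+\cdots+d_k$ to the geometric intersection of two distinct irreducible curves, and this is bounded by $e\cdot e=l-1$, so $d_1+\cdots+d_k<l$. Your contradiction scheme replaces this one-liner with a ``budget split'' between the common part and $f$, bounding the weight carried by the common part by $\sum_i l_{e_{C_i}}\le l_e-1$ via \eqref{red-dim}; that bound is unjustified (the weight at a prescribed point is a sum of multiplicities of components through it and has nothing to do with the dimensions $l_{e_{C_i}}$), and even granting both of your bounds you would only reach $\sum_i d_i\le 2(l_e-1)$, not $\sum_i d_i<l$.

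The more serious gap is in part (2). Your saturation step asserts that the prescribed points exhaust ``$C_1$'s intersection budget $m_1e\cdot e_{C_1}$ with the rest of $\Theta$,'' so that $\Psi''$ has no room to meet $C_1$. But the $x_j$ are smooth points of $\Theta$, hence lie on no other component of $\Theta$; they are not intersection points of $C_1$ with the rest of $\Theta$, and $m_1e\cdot e_{C_1}$ is not that budget in any case ($m_1e_{C_1}\cdot(e-m_1e_{C_1})$ would be). Worse, when the shared multiplicity satisfies $m_1''\ge m_1$, the component $C_1\subset\Theta''$ alone already supplies the required weight $m_1$ at every prescribed point on $C_1$, so no local constraint on $\Psi''$ arises at $C_1$ and your propagation along the connected graph never starts. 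The paper avoids any local/propagation mechanism: after cancellation it counts globally that the residual part carries at least $e\cdot e_{\Theta_0}$ weighted points which must also lie on the other residual part, and then establishes the purely homological inequality $e\cdot e_{\Theta_0}\ge e_{\Theta_0}^2$---strict because a common component exists and the subvarieties are connected (Theorem 1.5 of \cite{LZrc})---by pairing $e-e_{\Theta_0}$ against the two different expressions of the residual class so that no negative self-intersection term ever appears; this contradicts positivity of local intersections for the component-free pair $\Theta_0,\Theta_0'$. That homological comparison is the real content of the lemma and is absent from your sketch; neither Theorem \ref{spheresphere} nor \eqref{red-dim} substitutes for it.
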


\begin{proof}
The first statement simply follows from positivity of intersection of two distinct irreducible $J$-holomorphic curves. This is because the points $(x_i, d_i)$ are in the intersection of $\Theta$ and $\Theta'$ and each $d_i$ is no greater than the local intersection index of them at $x_i$. These local intersection indices are positive integers which add up to $e\cdot e$, although there might be  intersection points of $\Theta$ and $\Theta'$ other than $x_i$. Thus, the inequality follows.  

For the second statement, suppose there is another such subvariety $\Theta'$, such that $\Theta$ and $\Theta'$ share at least one common irreducible components. 

We rewrite two subvarieties $\Theta, \Theta' \in \mathcal M_e$, allowing $m_i=0$ in the notation, such that they share the same set of irreducible components formally, i.e. $\Theta=\{(C_i, m_i)\}$ and $\Theta'=\{(C_i, m'_i)\}$. Then for each $C_i$, if $m_i\le m'_i$, we change the components to $(C_i, 0)$ and $(C_i, m'_i-m_i)$. At the same time, if a point $x$, as one of $x_1, \cdots, x_k$, is on $C_i$, then the weight is reduced by $m_i$ as well. Similar procedure applies to the case when $m_i> m_i'$.  Apply this process to all $i$ and discard finally all components with multiplicity $0$ and denote them by $\Theta_0,\Theta'_0$ and still use $(C_i, m_i)$ and $(C_i, m'_i)$ to denote their components. Notice they are homologous, formally having homology class $$e-\sum_{m_{k_i}<m_{k_i}'} m_{k_i}e_{C_{k_i}}-\sum_{m_{l_j}'<m_{l_j}} m'_{l_j}e_{C_{l_j}}-\sum_{m_{q_p}'=m_{q_p}} m'_{q_p}e_{C_{q_p}}.$$ 

There are two ways to express the class, by taking $e=e_{\Theta}$ or $e=e_{\Theta'}$ in the above formula. Namely, it is $$\sum_{m_{k_i}<m_{k_i}'}(m_{k_i}'-m_{k_i})e_{C_{k_i}}+\hbox{others}=e_{\Theta_0'}=e_{\Theta_0}=\sum_{m_{l_j}'<m_{l_j}} (m_{l_j}-m'_{l_j})e_{C_{l_j}}+\hbox{others}.$$
Here the term ``others" means the terms $m_ie_{C_i}$ or $m_i'e_{C_i}$ where $i$ is not taken from $k_i$, $l_j$ or $q_p$. 

Now $\Theta_0$ and $\Theta_0'$ have no common components. By the process we just applied, counted with weight, there are  at least $e\cdot e_{\Theta_0}$ points on $\Theta_0$. These points are also contained in $\Theta_0'$ with right weights. Hence $\Theta_0$ and $\Theta_0'$ would intersect at least $e\cdot e_{\Theta_0}$ points with weight.

We notice that $e\cdot e_{\Theta_0}\ge e_{\Theta_0}\cdot e_{\Theta_0'}$. In fact, the difference $e-e_{\Theta_0}=e-e_{\Theta_0'}$ has $3$ types of terms, any of them pairing non-negatively with the class $e_{\Theta_0}$. For the terms with index $k_i$, {\it i.e.} the terms with $m_{k_i}<m_{k_i}'$, we use the expression of $e_{\Theta_0}=\sum_{m_{l_j}'<m_{l_j}} (m_{l_j}-m'_{l_j})e_{C_{l_j}}+\hbox{others}$ to pair with. Since the irreducible curves involved in the expression are all different from $C_{k_i}$, we have $e_{C_{k_i}}\cdot e_{\Theta_0}\ge 0$. Similarly, for $C_{l_j}$, we use the expression of $e_{\Theta_0'}=\sum_{m_{k_i}<m_{k_i}'}(m_{k_i}'-m_{k_i})e_{C_{k_i}}+\hbox{others}$. We have  $e_{C_{l_j}}\cdot e_{\Theta_0'}\ge 0$. For $C_{q_p}$, we could use either $e_{\Theta_0}$ or $e_{\Theta_0'}$. Since $e_{\Theta_0}=e_{\Theta_0'}$, we have $(e-e_{\Theta_0})\cdot e_{\Theta_0}\ge 0$. 

Moreover, we have the strict inequality $e\cdot e_{\Theta_0}> e_{\Theta_0}^2$. This is because we assume the original $\Theta, \Theta'$  are connected and have at least one common component. The first fact implies there is at least one index in $k_i$, $l_j$ or $q_p$. The second fact implies at least one of the intersection of $C_{k_i}$, $C_{l_j}$ or $C_{q_p}$ with $e_{\Theta_0}$ as in the last paragraph would take positive value.

As we have shown that $\Theta_0$ and $\Theta_0'$ would intersect at least $e\cdot e_{\Theta_0}$ points with weight, the inequality  $e\cdot e_{\Theta_0}> e_{\Theta_0}^2$ implies 
the sum of local intersection indices of $\Theta_0$ and $\Theta_0'$ is greater than the homology intersection number $e_{\Theta_0}^2$ of our new subvarieties $\Theta_0$ and $\Theta_0'$. This contradicts to the local positivity of intersection and the fact that $\Theta_0, \Theta_0'$ have no common component. The contradiction implies that $\Theta$ is the unique such subvariety as described in the statement. 
\end{proof}

The lemma and its argument will be used later, in particular, Theorem \ref{red-1}, Theorem \ref{M_T} and Proposition \ref{connected}.
A similar statement for the more general second type intersection will be proved by a similar argument and used in Theorem \ref{homeoCPl}.

\subsection{Seiberg-Witten invariants and subvarieties}\label{SWwc}
Other than techniques in \cite{LZ-generic, LZrc}, another important ingredient of our method is the Seiberg-Witten invariant.  We follow the notation in \cite{p=h}. However we need a more general setting.

Let $M$ be an oriented $4$-manifold with a given Riemannian metric $g$ and a spin$^{c}$ structure $\mathcal L$.  Hence there are a pair of rank $2$ complex vector bundles $S^{\pm}$ with isomorphisms $\det(S^+)=\det(S^-)=\mathcal L$. The Seiberg-Witten equations are for a pair $(A, \phi)$ where $A$ is a connection of $\mathcal L$ and $\phi\in \Gamma(S^+)$ is a section of $S^+$. These equations are 
$$D_A\phi=0$$
$$F_A^+=iq(\phi)+i\eta$$ where $q$ is a canonical map $q: \Gamma(S^+)\rightarrow \Omega^2_+(M)$ and $\eta$ is a self-dual $2$-form on $M$. 

The group $C^{\infty}(M; S^1)$ naturally acts on the space of solutions. Under this action, the map $f\in C^{\infty}(M; S^1)$ sends a pair $(A, \phi)$ to $(A+2fdf^{-1}, f\phi)$. It acts freely at irreducible solutions. Recall a reducible solution has $\phi=0$, and hence $F_A^+=i\eta$. The quotient is the moduli space, denoted by $\mathcal M_M(\mathcal L, g, \eta)$. For generic pairs $(g, \eta)$, the Seiberg-Witten moduli space $\mathcal M_M(\mathcal L, g, \eta)$ is a compact manifold of dimension $$2d(\mathcal L)=\frac{1}{4}(c_1(\mathcal L)^2-(3\sigma(M)+2\chi(M)))$$ where $\sigma(M)$ is the signature and $\chi(M)$ is the Euler number. Furthermore, an orientation is given to $\mathcal M_M(\mathcal L, g, \eta)$ by fixing a homology orientation for $M$, {\it i.e.} an orientation of $H^1(M)\oplus H^2_+(M)$. When $b^+(M)=1$, the space of $g$-self-dual forms $\mathcal H^+_g(M)$ is spanned by a single harmonic $2$-form $\omega_g$ of norm $1$ agreeing with the homology orientation.

Quotient out the space of triple $(p, (A, \phi))$ where $p\in M$ and $(A, \phi)$ is a solution of Seiberg-Witten equation by based actions $f\in C^{\infty}(M; S^1)$ with $f(p)=1$, we obtain a smooth manifold $\mathcal E$. It is a principal $S^1$ bundle over $M\times  \mathcal M_M(\mathcal L, g, \eta)$. The slant product with $c_1(\mathcal E)$ defines a natural map $\psi$ from $H_*(M, \mathbb Z)$ to $H^{2-*}(\mathcal M_M(\mathcal L, g, \eta), \mathbb Z)$.

We now assume $(M, J)$ is an almost complex $4$-manifold with canonical class $K$. We denote $e:=\frac{c_1(\mathcal L)+K}{2}\in H^2(M; \mathbb Z)/(2\hbox{-torsion})$. For a generic choice of $(g, \eta)$, the Seiberg-Witten invariant $SW^*_{M, g, \eta}(e)$ takes value in $\Lambda^*H^1(M, \mathbb Z)$. If $d(\mathcal L)<0$, then the SW invariant is defined to be zero. Otherwise, let $\gamma_1\wedge \cdots\wedge \gamma_p\in \Lambda^p(H_1(M, \mathbb Z)/\hbox{Torsion})$, we define 
\begin{equation}\label{SWdef}
SW^*_{M, g, \eta}(e; \gamma_1\wedge\cdots\wedge \gamma_p):=\int_{\mathcal M_M(\mathcal L, g, \eta)} \psi(\gamma_1)\wedge\cdots\wedge\psi(\gamma_p)\wedge\psi(pt)^{d-\frac{p}{2}}.
\end{equation}

If $b^+>1$, a generic path of $(g, \eta)$ contains no reducible solutions. Hence, the Seiberg-Witten invariant is an oriented diffeomorphism invariant in this case. Hence we can use the notation $SW^*(e)$ for the (full) Seiberg-Witten invariant. We will also write $$\dim_{SW}(e)=2d(\mathcal L)=e^2-K\cdot e$$ for the Seiberg-Witten dimension. In the case $b^+=1$, there might be reducible solutions on a $1$-dimensional family.  Recall that the curvature $F_A$ represents the cohomology class $-2\pi ic_1(\mathcal L)$. Hence $F_A^+=i\eta$ holds only if $-2\pi c_1(\mathcal L)^+=\eta$. This happens if and only if the discriminant $\Delta_{\mathcal L}(g, \eta):=\int (2\pi c_1(\mathcal L)+\eta)\omega_g=0$. With this in mind, the set of pairs $(g, \eta)$ with positive (resp. negative) discriminant is called the positive (resp. negative) $\mathcal L$ chamber. We use the notation $SW^*_{\pm}(e)$ for the Seiberg-Witten invariants in these two chambers. The part of $SW^*(e)$ (resp. $SW_{\pm}^*(e)$) in $\Lambda^{i}H^1(M, \mathbb Z)$ will be denoted by $SW^i(e)$ (resp. $SW_{\pm}^i(e)$). Moreover, in the this paper, we will use $SW^*(e)$ instead of $SW^*_-(e)$ when $b^+=1$. For simplicity, the notation $SW(e)$ is reserved for $SW^0(e)$.

We now assume $(M, \omega)$ is a symplectic $4$-manifold, and $J$ is a $\omega$-tamed almost complex structure. Then the results in \cite{T, LLb+1} equate Seiberg-Witten invariants with Gromov-Taubes invariants that are defined by making a suitable counting of $J$-holomorphic subvarieties. In fact, our $SW^*(e)$ used in this paper is essentially the Gromov-Taubes invariant in the literature. In particular, our $SW^*(e)$ is the original Seiberg-Witten invariant of the class $2e-K$. The key conclusion we will take from this equivalence is that when $SW^*(e)\ne 0$, there is a $J$-holomorphic subvariety in class $e$. 
Moreover, if $SW(e)\ne 0$, there is a $J$-holomorphic subvariety in class $e$ passing through $\dim_{SW}(e)$ given points. 

Hence, to produce subvarieties in a given class, we will prove nonvanishing results for $SW^*(e)$, usually for $SW(e)$. When $b^+(M)>1$, an important result of Taubes says that $SW(K)=1$. When $b^+(M)=1$, the key tool is the wall-crossing formula, which relates the Seiberg-Witten invariants of classes $K-e$ and $e$ when  $\dim_{SW}(e)\ge 0$. The general wall-crossing formula is proved in \cite{LLwall}. In particular, when $M$ is rational or ruled, we have $$|SW(K-[C])-SW([C])|=\begin{cases}   1 & \hbox{if $(M, \omega)$ rational,}\cr
  |1+[C]\cdot T|^h &\hbox{if
  $(M, \omega)$ irrationally ruled,}\cr\end{cases}$$
  where $T$ is the unique positive fiber class and $h$ is the genus of base surface of irrationally ruled manifolds. For a general symplectic $4$-manifold with $b^+(M)=1$, usually the wall-crossing number for $SW(e)$ is hard to determine and sometimes vanishes \cite{LLwall}. However, we still have a simple formula for top degree part of Seiberg-Witten invariant (see Lemma 3.3 (1) of \cite{LL}). 

\begin{prop}\label{generalWC}
Let $M$ be a symplectic $4$-manifold with $b^+=1$ and canonical class $K$. 
Suppose $\dim_{SW}(e)\ge b_1$. Let $\gamma_1, \cdots, \gamma_{b_1}$ be a basis of $H_1(M, \mathbb Z)/\hbox{Torsion}$ such that $\gamma_1\wedge\cdots\wedge \gamma_{b_1}$ is the dual orientation of that on $\Lambda^{b_1}(H^1(M, \mathbb Z))$. Then $$|SW^{b_1}(K-e; \gamma_1\wedge\cdots\wedge \gamma_{b_1})-SW^{b_1}(e; \gamma_1\wedge\cdots\wedge \gamma_{b_1})|=1.$$
\end{prop}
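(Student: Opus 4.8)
The plan is to reduce the statement to the wall-crossing formula for the top-degree part of the Seiberg-Witten invariant when $b^+=1$, essentially the content of Lemma 3.3(1) of \cite{LL}, and then to check that the hypotheses and normalizations match. First I would recall the general structure of the $b^+=1$ theory set up above: for a fixed spin$^c$ structure $\mathcal L$ with $e=\tfrac{1}{2}(c_1(\mathcal L)+K)$, the reducible locus in the space of parameters $(g,\eta)$ is the wall $\Delta_{\mathcal L}(g,\eta)=0$, separating the positive and negative $\mathcal L$-chambers; in each chamber $SW^*_\pm(e)$ is well defined, and our convention is $SW^*(e)=SW^*_-(e)$. The quantity $SW^{b_1}(e;\gamma_1\wedge\cdots\wedge\gamma_{b_1})$ is, by \eqref{SWdef}, the integral over $\mathcal M_M(\mathcal L,g,\eta)$ of $\psi(\gamma_1)\wedge\cdots\wedge\psi(\gamma_{b_1})$ wedged with the appropriate power of $\psi(pt)$; the hypothesis $\dim_{SW}(e)\ge b_1$, i.e. $2d(\mathcal L)\ge b_1$, is exactly what is needed for this expression to make sense (the $\psi(pt)$ power $d-\tfrac{b_1}{2}$ is nonnegative and the total degree matches $\dim\mathcal M_M=2d$).

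The heart of the matter is the computation of the difference $SW^{b_1}_+(e;-)-SW^{b_1}_-(e;-)$ as one crosses the wall. As the parameter path crosses $\Delta_{\mathcal L}=0$ transversally, exactly one reducible solution appears, and a Kuranishi/obstruction-bundle analysis near that reducible (the standard argument, carried out in e.g. \cite{LLwall} and \cite{LL}) shows that the jump in the invariant is computed by the Euler class of a bundle over $\mathbb CP^{d-1}$-type model twisted by the $H^1(M)$-factor; when one pairs against the full wedge $\gamma_1\wedge\cdots\wedge\gamma_{b_1}$ of a basis dual to the chosen orientation of $\Lambda^{b_1}H^1(M,\mathbb Z)$, this Euler-class computation collapses to $\pm1$, with the sign absorbed by the orientation conventions. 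Concretely I would invoke Lemma 3.3(1) of \cite{LL} in the form: for $\dim_{SW}(e)\ge b_1$ and $\gamma_1\wedge\cdots\wedge\gamma_{b_1}$ the dual orientation, the wall-crossing term for $SW^{b_1}(e;\gamma_1\wedge\cdots\wedge\gamma_{b_1})$ equals $1$ in absolute value. Then the charge-conjugation symmetry $\mathcal L\mapsto \mathcal L^{-1}\otimes K$, which sends $e\mapsto K-e$ and interchanges the two chambers (up to the known sign depending on $d$ and $b^+$), identifies the negative-chamber invariant of $e$ with the positive-chamber invariant of $K-e$; combining this with the wall-crossing computation yields $|SW^{b_1}(K-e;\gamma_1\wedge\cdots\wedge\gamma_{b_1})-SW^{b_1}(e;\gamma_1\wedge\cdots\wedge\gamma_{b_1})|=1$, which is the assertion.

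The step I expect to be the main obstacle is the bookkeeping of signs and orientations: making sure that the homology orientation on $H^1(M)\oplus H^2_+(M)$, the chosen basis $\gamma_1\wedge\cdots\wedge\gamma_{b_1}$ of $\Lambda^{b_1}H^1(M,\mathbb Z)$ being the \emph{dual} orientation, the identification of $\psi(\gamma_i)$ with slant products of $c_1(\mathcal E)$, and the charge-conjugation sign all fit together so that the right-hand side is exactly $1$ rather than some other nonzero integer. The analytic input (transversality of the wall crossing, the local model near the reducible, the obstruction-bundle Euler class) is entirely standard and can be quoted; what requires care is that the particular normalization of $SW^*$ used in this paper — namely $SW^*(e)$ being the Gromov-Taubes-flavored invariant, equivalently the ordinary Seiberg-Witten invariant of $2e-K$ in the negative chamber — is precisely the one for which Lemma 3.3(1) of \cite{LL} gives wall-crossing number $1$. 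Once that compatibility is verified, the proposition follows immediately.
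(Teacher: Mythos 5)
Your proposal is correct and matches the paper exactly: the paper gives no independent proof of this proposition, but simply quotes it as Lemma 3.3(1) of \cite{LL}, which is precisely the reduction you make (your sketch of the reducible-solution wall-crossing and charge-conjugation bookkeeping is the standard content behind that lemma). Nothing further is needed.
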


Here, $b_1$ stands for the first Betti number. In particular, it implies a nonvanishing result: let $e\in H^2(M, \mathbb Z)$ be a class with $e^2\ge 0$, $K\cdot e\le 0$, and at least one of the inequalities being strict, then $SW^*(ke)\ne 0$ for sufficiently large $k$.

\subsection{Almost K\"ahler Hodge conjecture}
Let $X$ be a non-singular complex projective manifold. The (integral) Hodge conjecture asks whether every class in $H^{2k}(X, \mathbb Q)\cap H^{k, k}(X)$ (resp. $H^{2k}(X, \mathbb Z)\cap H^{k, k}(X)$) is a linear combination with rational (resp. integral) coefficients of the cohomology classes of complex subvarieties of $X$. When $\dim_{\mathbb C}X\le 3$, Hodge conjecture is known to be true and follows from  Lefschetz theorem on $(1, 1)$ classes. The integral Hodge conjecture, which was Hodge's original conjecture, is known to be false for some projective $3$-folds. 

In this subsection, we will show an amusing result, which basically says that the integral Hodge conjecture, or Lefschetz theorem on $(1, 1)$ classes, is true for almost K\"ahler $4$-manifolds of $b^+=1$.

It is well known that in general the almost K\"ahler Hodge conjecture statement is not true if $b^+>1$, even when our manifold is K\"ahler. The most well known counterexample is a generic CM complex tori. It has no subvarieties in general, but the group of integral Hodge classes has $\dim H^{1,1}(M, \mathbb Z)=2$. See the appendix of \cite{zuc}.

In our situation, $H_J^+(M)\cap H^2(M, \mathbb K)$ plays the role of $H^{1,1}(M, \mathbb K)$ for $\mathbb K=\mathbb Z$ or $\mathbb Q$. Here $H_J^+(M)$ is called the $J$-invariant cohomology which is introduced in \cite{LZ, DLZ} along with the $J$-anti-invariant $H_J^-(M)$. Recall that an almost complex
structure acts on the bundle of real 2-forms $\Lambda^2$ as an
involution, by $\alpha(\cdot, \cdot) \rightarrow \alpha(J\cdot,
J\cdot)$. This involution induces the splitting into $J$-invariant, respectively,
$J$-anti-invariant 2-forms
$\Lambda^2=\Lambda_J^+\oplus \Lambda_J^-$. Then we define 
$H_J^{\pm}(M)=\{ \mathfrak{a} \in H^2(M;\mathbb R) | \exists \;
\alpha\in  \Lambda_J^{\pm}, \, d\alpha=0 \mbox{ such that } [\alpha] =
\mathfrak{a}\}$.

A {\it divisor} (resp. {\it $\mathbb Q$-divisor}) with respect to an almost complex structure $J$ is
 a finite formal sum $\sum a_iC_i$ where $C_i$ are $J$-holomorphic irreducible curves and $a_i\in \mathbb Z$ (resp. $a_i\in \mathbb Q$).

\begin{prop}\label{hodge}
Let $M$ be a symplectic $4$-manifold with $b^+(M)=1$, and $J$ a tamed almost complex structure on it. Any element of $H^2(M, \mathbb Z)$ is the cohomology class of a divisor (with respect to $J$). 
\end{prop}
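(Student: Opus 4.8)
The plan is to exploit the wall-crossing machinery from Section 2.4 to produce, for an arbitrary class $e \in H^2(M,\mathbb Z)$, a $J$-holomorphic subvariety whose class is $e$ up to a controllable correction term, and then to cancel the correction term. First I would reduce to the situation where $e$ has large self-intersection and large negative pairing with $K$: fix a class $a$ with $a^2>0$, $K\cdot a<0$ (such a class exists on any symplectic $4$-manifold with $b^+=1$, e.g.\ a large multiple of $[\omega]$-dual shifted appropriately), and note that for $N$ large the class $e+Na$ satisfies $(e+Na)^2>0$ and $K\cdot(e+Na)<0$. By the nonvanishing remark following Proposition \ref{generalWC}, some positive multiple $k(e+Na)$ has $SW^*(k(e+Na))\neq 0$, hence is represented by a $J$-holomorphic subvariety. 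Likewise $k'Na$ is represented by a subvariety for suitable $k'$. The issue is that this only gives us classes of a special shape; to hit an arbitrary integral class we must be able to subtract.

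The key step is therefore: given that $e_1$ and $e_2$ are both classes of $J$-holomorphic divisors, show $e_1-e_2$ is also the class of a $J$-divisor (with possibly negative coefficients — which is allowed, since the statement asks only for a \emph{divisor} $\sum a_iC_i$ with $a_i\in\mathbb Z$, not an effective one). This is immediate from the definition: if $\Theta_1=\{(C_i,m_i)\}$ represents $e_1$ and $\Theta_2=\{(D_j,n_j)\}$ represents $e_2$, then $\sum m_iC_i-\sum n_jD_j$ is a divisor with class $e_1-e_2$. So the real content is just producing \emph{enough} effective classes: I need that the $J$-effective classes (together with their negatives and sums) span $H^2(M,\mathbb Z)$ over $\mathbb Z$. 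Given the nonvanishing result, for every $a$ with $a^2>0, K\cdot a<0$ we get $k_a a$ effective for some $k_a>0$; the cone of such $a$ is open and nonempty in $H^2(M,\mathbb R)$ (it contains a neighborhood of $[\omega]^\vee$ in the positive cone, intersected with $\{K\cdot a<0\}$), hence the $\mathbb Q$-span of the integral points in it is all of $H^2(M,\mathbb Q)$. Clearing denominators, a $\mathbb Z$-basis $\{f_1,\dots,f_b\}$ of $H^2(M,\mathbb Z)$ can be written with $\mathbb Q$-coefficients in terms of effective classes; but to get an honest integral divisor representing each $f_s$ I multiply through: $Mf_s = \sum(\text{effective}) - \sum(\text{effective})$ for some $M\in\mathbb Z_{>0}$, which exhibits $Mf_s$ as a $J$-divisor. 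This is not yet enough — I want $f_s$ itself.

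To get rid of the multiple $M$, I would argue directly at the level of a single class rather than a basis. Fix an arbitrary $e$. Choose $a$ with $a^2>0$, $K\cdot a<0$, and consider the two classes $e+Na$ and $Na$ for $N\gg 0$: both lie in the positive cone with negative $K$-pairing once $N$ is large enough relative to $e$ (for $(e+Na)^2 = e^2 + 2N(e\cdot a)+N^2a^2$, the $N^2$ term dominates; similarly $K\cdot(e+Na)<0$). The trouble is the nonvanishing remark only gives effectivity of a \emph{multiple} $k(e+Na)$, not of $e+Na$ on the nose. Here I would instead invoke Proposition \ref{generalWC} more carefully: it gives $|SW^{b_1}(K-e';\gamma)-SW^{b_1}(e';\gamma)|=1$ whenever $\dim_{SW}(e')\geq b_1$, so at least one of $e'$, $K-e'$ has nonzero $SW^*$ and hence is $J$-effective — \emph{for that very class}, no multiple needed. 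Apply this with $e'=e+Na$: either $e+Na$ is $J$-effective, or $K-e-Na$ is; but for $N$ large $K-e-Na$ has negative pairing with $[\omega]$ (the $Na$ term dominates and $a\cdot[\omega]>0$ can be arranged by choosing $a$ in the forward cone), so it cannot be $J$-effective. Hence $e+Na$ is $J$-effective. Running the same argument with $e'=Na$ shows $Na$ is $J$-effective. Then $e = (e+Na)-(Na)$ is a difference of two $J$-effective classes, hence the class of a $J$-divisor. The main obstacle is precisely this last bookkeeping: ensuring the hypothesis $\dim_{SW}\geq b_1$ holds (clear for $N$ large since $\dim_{SW}(e')=(e')^2-K\cdot e'$ grows like $N^2$) and ensuring the "other" class $K-e'$ is ruled out by a taming/positivity argument rather than leaving an ambiguity — this requires choosing $a$ with $a\cdot[\omega]>0$ and $N$ large enough that $(K-e-Na)\cdot[\omega]<0$, which is elementary but must be stated.
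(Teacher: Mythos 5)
Your proposal is correct and rests on the same two engines as the paper's proof — the top-degree wall-crossing formula (Proposition \ref{generalWC}) combined with the taming positivity that forbids a $J$-effective class from pairing negatively with a taming symplectic class, followed by writing the arbitrary class as a difference of two $J$-effective classes — but the bookkeeping is genuinely different. The paper first perturbs $\omega$ to a rational class, builds $b_2+1$ auxiliary $J$-tamed symplectic forms $\omega_i=N\omega+\alpha_i$ whose integral classes generate $H^2(M,\mathbb Z)$, shows $SW^{b_1}(L[\omega_i])\neq 0$ for $L$ large by ruling out $K-L[\omega_i]$ via $(K-L[\omega_i])\cdot[\omega_i]<0$, and then splits an arbitrary class as $\sum a_i(L+1)[\omega_i]-\sum a_iL[\omega_i]$. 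You instead apply Proposition \ref{generalWC} directly to $e+Na$ and to $Na$ for a single integral class $a$, rule out $K-e-Na$ and $K-Na$ by their negative pairing with the original $[\omega]$, and write $e=(e+Na)-Na$; this avoids constructing the auxiliary tamed forms and the rational-class choice altogether, at the cost of losing the paper's by-product that large multiples of integral tamed symplectic classes are effective. Two small repairs: the condition $K\cdot a<0$ is never used in your argument and in fact cannot always be arranged on a $b^+=1$ manifold (if $K$ lies in the forward positive cone, the light cone lemma forces $K\cdot a>0$ for every $a$ with $a^2>0$, $a\cdot[\omega]>0$), so drop it; and the existence of an integral $a$ with $a^2>0$ and $a\cdot[\omega]>0$ should be justified in one line — the unimodular lattice $H^2(M,\mathbb Z)/\mathrm{Tor}$ has signature $(1,b^-)$, hence represents a positive integer, and after possibly replacing $a$ by $-a$ the light cone lemma gives $a\cdot[\omega]>0$ (alternatively, follow the paper and perturb $\omega$ to a rational taming form, then take $a$ a multiple of its class). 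With these adjustments your argument is a clean and slightly more economical proof of the proposition.
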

\begin{proof}
When $b^+(M)=1$, by Corollary 3.4 of \cite{DLZ}, we have $h_J^-=\dim H_J^-(M)=0$ and $H_J^+(M)=H^2(M, \mathbb R)$. Let $e_1, \cdots, e_{b_2}$ be a $\mathbb Z$-basis of $H^2(M, \mathbb Z)$, and $\alpha_1, \cdots, \alpha_{b_2}$ $2$-forms representing them. Since being a $J$-tamed symplectic form is an open condition, if $J$ is tamed by a symplectic form $\omega$, we can choose $\omega$ such that $[\omega]\in H^2(M, \mathbb Q)$. Then we can find a large integer $N$ and $b_2+1$ $J$-tamed symplectic forms $\omega_i=N\omega+\alpha_i$ with $[\omega_i]=N[\omega]+e_i\in H^2(M, \mathbb Z)$ when $1\le i\le b_2$ and $\omega_0=N\omega$. Their cohomology classes generate the vector space $H^2(M, \mathbb Z)$. 

If we choose $L>k:=\max_i\{0, \frac{K\cdot [\omega_i]}{[\omega_i]\cdot [\omega_i]}\}+b_1$, we have $$\dim_{SW}(L[\omega_i])=L(L[\omega_i]^2-K\cdot [\omega_i])> L((K\cdot [\omega_i]+b_1)-K\cdot[\omega_i]) \ge b_1.$$ 

Apply Proposition \ref{generalWC}, we have $SW^{b_1}(L[\omega_i])\ne SW^{b_1}(K-L[\omega_i])$. We claim that when  $L>k$, $SW^{b_1}(L[\omega_i])\ne 0$ for any $i$. By wall-crossing, we only need to show that $SW^{b_1}(K-L[\omega_i])= 0$. We prove it by contradiction. If $SW^*(K-L[\omega_i])\ne 0$, then $K-L[\omega_i]$ will be the class of a $J$-holomorphic subvariety and hence an $\omega_i$-symplectic submanifold. However, when $L>k$, we have $(K-L[\omega_i])\cdot [\omega_i]<0$, which is a contradiction. Hence, we have $SW(L[\omega_i])\ne 0$ for $L>k$ and  there are subvarieties in class $L[\omega_i]$ for any $i$.

Let $a\in H^2(M, \mathbb Z)$ be an arbitrary class. Because of the way we choose our $\omega_i$, we have $a=\sum_{i=0}^{b_2}a_i[\omega_i]$ with $a_i\in \mathbb Z$. Now we further write it as $a=\sum_{i=0}^{b_2}a_i(L+1)[\omega_i]-\sum_{i=0}^{b_2}a_i L[\omega_i]$, which implies  $a$ is the cohomology class of a divisor.
\end{proof}

\begin{remark}
There is another argument to prove $SW(K-L[\omega_i])= 0$ for large $L$. This is because $K-L[\omega_i]$ pairs negatively with $2K$ for non-rational or non-ruled manifolds, with $H$ for $\mathbb CP^2\# k\overline{\mathbb CP^2}$, with a positive fiber class $A$ for $S^2\times S^2$, and with the positive fiber class $T$ for irrational ruled manifolds. All of the classes mentioned above are SW non-trivial classes with a representative of irreducible $J$-holomorphic non-negative self-intersections. Hence the contradiction follows from Lemma \ref{int>0}  by taking $e=K-L[\omega_i]$.
\end{remark}

We remark that the symplectic version of Hodge conjecture holds for any compact symplectic manifolds $(M^{2n}, \omega)$. More precisely, in \cite{hvle}, it shows that any element of $H_{2k}(M^{2n}, \mathbb Z)$ is a symplectic $\mathbb Q$-cycle in the form $\frac{1}{N}[S_1^{2k}]-\frac{1}{N}[S_2^{2k}]$ where $N$ is a positive integer and $S_i^{2k}$ are symplectic submanifolds of dimension $2k$.

\section{Irrational ruled surfaces}
In this section, we use the techniques of \cite{LZrc, LZ-generic} along with Seiberg-Witten theory to identify the moduli space of $J$-holomorphic subvarieties in the fiber class of irrational ruled surfaces for any tamed almost complex structure $J$. When the irrational ruled surface is minimal, it was handled by McDuff in a series of papers, in particular \cite{Mc2}. For non-minimal irrational ruled surfaces, the structure of reducible subvarieties was not clear for a non-generic tamed almost complex structure. The work of \cite{LZ-generic, LZrc} developed a toolbox to study this kind of problems.

To apply the results and techniques from \cite{LZ-generic, LZrc}, one has to check the $J$-nefness of the classes we are dealing with. For previous applications, like Nakai-Moishezon type duality and the tamed to compatible question, we could always start with a $J$-nef class. However, for most other applications like our  problem in this section, we do not know $J$-nefness {\it a priori}.   In the following, we will develop a strategy to verify this technical condition. Then along with the techniques in \cite{LZrc, LZ-generic}, we cook up a general scheme to investigate the moduli space of subvarieties (and its irreducible and reducible parts) in a given class.

The following lemma is Lemma 2.2 in \cite{p=h}. Since the statement is very useful and the proof is extremely simple, we include in the following.

\begin{lemma}\label{int>0}

If $C$ is an irreducible $J$-holomorphic curve with $C^2\ge 0$ and $SW(e)\ne 0$, then $e \cdot [C]\ge 0$.

\end{lemma}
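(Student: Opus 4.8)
The idea is to argue by contradiction, combining the fact that $SW(e) \neq 0$ forces the existence of a $J$-holomorphic subvariety in class $e$ with the positivity of local intersections. So suppose $e \cdot [C] < 0$. Since $SW(e) \neq 0$, the equivalence $SW = \mathrm{Gr}$ (the results of Taubes and Li--Liu recalled in section 2.4) guarantees that there is a $J$-holomorphic subvariety $\Theta$ with $e_\Theta = e$. Write $\Theta = \{(C_i, m_i)\}$. Then
\[
e \cdot [C] = \sum_i m_i \, [C_i] \cdot [C].
\]
Since each $m_i > 0$ and the sum is negative, there must be at least one index $i$ with $[C_i]\cdot[C] < 0$.

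Now I would use positivity of intersections. For any two distinct irreducible $J$-holomorphic curves, every intersection point contributes positively, so $[C_i]\cdot[C] \geq 0$ whenever $C_i \neq C$. Hence the only way to get $[C_i]\cdot[C] < 0$ is $C_i = C$, in which case $[C_i]\cdot[C] = C^2 \geq 0$ by hypothesis — still a contradiction. Therefore no such index $i$ exists, every term $m_i[C_i]\cdot[C]$ is non-negative, and $e\cdot[C] \geq 0$.

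There is essentially no hard step here: the only subtlety is making sure the two inputs are correctly invoked. First, one must be careful that $SW(e) \neq 0$ really does produce a genuine $J$-holomorphic subvariety for the given tamed $J$ and not merely for a generic one — this is exactly the content of the $SW = \mathrm{Gr}$ identification quoted just before the statement of Proposition \ref{generalWC}, which holds for any $\omega$-tamed $J$. Second, one should note that $C$ itself may or may not appear among the components $C_i$ of $\Theta$; the argument handles both cases uniformly because the self-intersection hypothesis $C^2 \geq 0$ covers the case $C_i = C$ and positivity of intersections covers the case $C_i \neq C$. No genericity, no transversality, and no further results from sections 2--3 are needed; this is why the lemma, as the authors note, has an extremely simple proof.
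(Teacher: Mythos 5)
Your proposal is correct and follows essentially the same route as the paper: $SW(e)\neq 0$ produces a $J$-holomorphic subvariety in class $e$, and each component pairs non-negatively with $C$ — by positivity of intersections when the component is distinct from $C$, and by the hypothesis $C^2\ge 0$ when it coincides with $C$. The paper's proof is just a more terse version of the same argument, stated directly rather than by contradiction.
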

\begin{proof}

Since $SW(e)\ne 0$, we can represent $e$ by a possible reducible $J$-holomorphic subvariety. Since each irreducible curve $C'$ has $[C']\cdot [C]\ge 0$, we have $e\cdot [C]\ge 0$.
\end{proof}

Let us now fix the notation. 
Since the blowups of $S^2\times \Sigma_h$ and nontrivial $S^2$ bundle over $\Sigma_h$ are diffeomorphic, we will write $M=S^2\times \Sigma_h\#k\overline{\mathbb CP^2}$ if it is not minimal. Let $U$ be the class of $\{pt\}\times \Sigma_h$ which has $U^2=0$ and $T$ be the class of the fiber $S^2\times \{pt\}$. Then the canonical class $K=-2U+(2h-2)T+\sum_iE_i$. 

On the other hand, if $M$ is a nontrivial $S^2$ bundle over $\Sigma_h$, $U$ represents the class of a section with $U^2=1$ and $T$ is the class of the fiber. Then $K=-2U+(2h-1)T$. In this section, we assume $h\ge 1$, {\it i.e.} $M$ is an irrational ruled surface.

We will first show that there is an embedded curve in the fiber class.

\begin{prop}\label{smoothT}
Let $J$ be a tamed almost complex structure on irrational ruled surface $M$, then the fiber class $T$ is $J$-nef. Hence there is an embedded curve in class $T$.  
\end{prop}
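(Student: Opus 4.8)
The plan is to exploit the key observation flagged in the introduction: the adjunction number $\iota_e$ of a class $e$ equals the Seiberg-Witten dimension $\dim_{SW}(-e)$ of $-e$. Suppose, for contradiction, that $T$ is not $J$-nef, so there is an irreducible $J$-holomorphic curve $C$ with $T\cdot e_C<0$. Since $T^2=0$ and $T$ is the positive fiber class (the unique $K_J$-spherical class of square $0$), I want to extract from the existence of $C$ a Seiberg-Witten nonvanishing statement and then run the wall-crossing formula for irrational ruled surfaces to produce a subvariety in the zero homology class, which is absurd. Concretely, first I would record that $T\cdot e_C<0$ together with $C$ irreducible forces, by the light cone / adjunction arithmetic on a ruled surface, that $e_C$ has a negative component "in the fiber direction" and in particular $g_J(e_C)\geq 0$ with $e_C\cdot e_C$ controlled; I would also note $K_J\cdot T = -2$, which is what makes the genus bookkeeping work.

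Next, the main step: I would apply the nonvanishing consequence of the wall-crossing formula (Proposition \ref{generalWC} and the displayed formula for $|SW(K-[C])-SW([C])|$ in the irrational ruled case, which is $|1+[C]\cdot T|^h$) to the class $-e_C$, or more precisely to a class built from $e_C$ using the identity $\iota_{e_C}=\dim_{SW}(-e_C)$. The point is that when $T$ fails to be nef, the relevant wall-crossing number $|1+e_C\cdot T|^h$ is a positive power of an integer of absolute value $\geq 2$ (since $e_C\cdot T\leq -1$ would give $|1+e_C\cdot T|\geq 0$; I need the genuinely negative case $e_C\cdot T\leq -2$, or to handle $e_C\cdot T=-1$ separately using that $e_C$ is then an exceptional-type class meeting $T$ negatively, which is already impossible because fibers are represented and positivity of intersections with the generic fiber through a point off $|C|$ gives a contradiction). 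So after disposing of $e_C\cdot T=-1$ by the elementary fiber argument, the remaining case $e_C\cdot T\leq -2$ yields $SW^*$-nonvanishing for some class $a$ with $a\cdot T<0$, hence by SW=Gr an effective $J$-holomorphic subvariety in class $a$; but then Lemma \ref{int>0} (applied with the irreducible fiber curve, which has square $0$) forces $a\cdot T\geq 0$, a contradiction.

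The cleanest route may actually be to avoid case analysis and argue as follows: if $T\cdot e_C<0$ for some irreducible $C$, consider the Gromov–Taubes / SW side and use the identity $\iota_{e}=\dim_{SW}(-e)$ to see that $-e_C$ (suitably twisted by $K_J$) lies in a chamber where wall-crossing forces $SW^*\neq 0$ on \emph{both} sides of the wall, producing a $J$-holomorphic subvariety $\Theta$ in a class $a$ that pairs negatively with the effective class $T$ — contradicting positivity of intersections of distinct irreducible curves (Lemma \ref{int>0}), since the support of $\Theta$ cannot contain every irreducible component of every representative of $T$. Once $T$ is shown $J$-nef, the final sentence is immediate: $T$ is a $J$-nef spherical class with $T^2=0\geq 0$, so Theorem \ref{unobstructed} gives $\mathcal M_{irr,T}\neq\emptyset$, i.e.\ there is an irreducible subvariety in class $T$, and since $g_J(T)=0$ it is a smooth rational curve; smoothness plus $T^2=0$ and the adjunction formula show it is embedded.

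I expect the main obstacle to be the precise homological/Seiberg-Witten bookkeeping in the contradiction step — namely verifying that when $T$ is not nef one really does land in a double-nonvanishing situation for the wall-crossing formula rather than in a degenerate chamber, and correctly tracking the role of $K_J\cdot T=-2$ and the base genus $h\geq 1$ (the hypothesis $h\geq 1$ is surely what is being used, since in the rational case $T$ need not be nef). The elementary subcase $e_C\cdot T=-1$ should be quick via the moving-fiber argument, but I would want to make sure it is genuinely subsumed or explicitly handled.
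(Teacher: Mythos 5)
Your overall strategy is the one the paper itself follows (and sketches in its introduction): assume an irreducible curve $C$ with $e_C\cdot T<0$, feed $-e_C$ into the wall-crossing formula, kill one side of the wall, and derive a contradiction. But the step that actually carries the proof is missing. To invoke wall-crossing for $-e_C$ you must first show $\dim_{SW}(-e_C)=e_C^2+K\cdot e_C\ge 0$, i.e.\ $g_J(e_C)\ge 1$; asserting $g_J(e_C)\ge 0$ ``with $e_C\cdot e_C$ controlled'' is not enough, and you explicitly leave this as the expected obstacle. The paper gets it from Kneser's theorem: writing $e_C=aU+bT-\sum_i c_iE_i$ with $a=e_C\cdot T<0$, the projection $C\to\Sigma_h$ has nonzero mapping degree $a$, so $2g_J(e_C)-2\ge 2g(\Sigma_C)-2\ge |a|(2h-2)\ge 0$ by the adjunction inequality and $h\ge 1$. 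This is precisely where irrationality enters; without it the argument cannot start (and the statement is false for rational ruled surfaces). Relatedly, your case split at $e_C\cdot T=-1$ is an artifact of plugging $e_C$ instead of $-e_C$ into the wall-crossing number: the relevant quantity is $|1+(-e_C)\cdot T|^h=(1-a)^h\ge 2^h\neq 0$ for every $a\le -1$, so no separate case exists; moreover the patch you propose (positivity of intersection with a representative of $T$ through a point off $|C|$) is unsound for the given $J$, since that representative may contain $C$ as a component --- irreducible curves in class $T$ for this $J$ are exactly what is not yet known.

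The endgame is also not closed as written. Wall-crossing gives nonvanishing of at least one of $SW(K+e_C)$ and $SW(-e_C)$ (not ``both sides''), and only the first of these pairs negatively with $T$; you cannot assume the nonvanishing lands on it. The paper's order is: since $(K+e_C)\cdot T=a-2<0$, Lemma \ref{int>0} applied with an auxiliary tamed $J'$ admitting an embedded curve in class $T$ (legitimate because SW nonvanishing is independent of the choice of tamed almost complex structure, which removes the circularity your ``irreducible fiber curve'' would otherwise introduce) forces $SW(K+e_C)=0$; hence $SW(-e_C)\neq 0$, so $-e_C$ is $J$-effective, and then $0=e_C+(-e_C)$ being a nonnegative combination of curve classes contradicts tamedness. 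This final zero-class/taming contradiction is absent from your sketch, and intersecting with $T$ alone cannot replace it. For the last sentence of the statement, quoting Theorem \ref{unobstructed} is defensible as it is stated in the paper, but note the paper argues directly instead: $SW(T)\neq 0$ with $\dim_{SW}(T)=2$ puts a subvariety in class $T$ through every point, while by \eqref{red-dim} reducible elements of $\mathcal M_T$ consist of negative rational curves, of which there are only finitely many, so some representative is irreducible and hence a smooth embedded sphere; this avoids relying on the nonemptiness part of the cited result outside the rational setting.
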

\begin{proof}
The first statement is equivalent to the following: let $C$ be an irreducible curve with $[C]=aU+bT-\sum_ic_iE_i$, then $a\ge 0$. We prove it by contradiction. Assume there is an irreducible curve with $a<0$. Then we know that $2g_J([C])-2=C^2+K\cdot [C]$. 
We take the projection $f:C\rightarrow \Sigma_h$ to the base. Its mapping degree is $a=[C]\cdot T$. Since $\Sigma_h$ has genus at least one, by Kneser's theorem, we have $$C^2+K\cdot [C]=2g_J([C])-2\ge 2g(\Sigma_C)-2\ge |a|(2h-2)\ge 0.$$Here $\Sigma_C$ is the model curve of the irreducible subvariety $C$.

Now we look at the class $-[C]$. By the above calculation, we have the Seiberg-Witten dimension $\dim_{SW}(-[C])=C^2-K\cdot (-[C])\ge 0$. Hence, we could apply the wall-crossing formula \begin{equation}\label{wallcross}|SW(K+[C])-SW(-[C])|=|1+(-[C])\cdot T|^h=(1-a)^h\ne 0.\end{equation}
 For classes $T$ and $e=K+[C]=(a-2)U+(2h-2+b)T+\sum_i(1-c_i)E_i$ when $M$ is not a nontrivial $S^2$ bundle  (or $e=K+[C]=(a-2)U+(2h-1+b)T$ when $M$ is a nontrivial $S^2$ bundle), we have $e\cdot T=a-2<0$. We choose an almost complex structure $J'$ such that there is an embedded $J'$-holomorphic curve in class $T$. Then apply Lemma \ref{int>0} for this $J'$ to conclude that  $SW(K+[C])=0$. Apply \eqref{wallcross}, we have $SW(-[C])\ne 0$. Hence the class $0=[C]+(-[C])$ is a class of subvariety. This contradicts to the fact that $J$ is tamed which implies that any positive combinations of curve classes have positive paring with a symplectic form taming $J$. This finishes the proof that $T$ is $J$-nef.
 
Note $g_J(T)=0$, any irreducible curve in class $T$ would be smooth. Hence, we only need to show the existence of an irreducible curve in class $T$. By Theorem 1.5 of \cite{LZrc}, all components of reducible curves in class $T$ are rational curves since $T$ is $J$-nef. Furthermore, all the subvarieties are connected since $J$ is tamed. Then by the dimension counting formula Equation \eqref{red-dim} for reducible subvarieties,  we know $\sum l_{e_i}\le l_T-1=0$. Here $e_i$ is the homology class of each irreducible component and $l_{e_i}=\max\{0, e_i\cdot e_i+1\}$. Hence $l_{e_i}=0$ and all these irreducible components are rational curves of negative self-intersections. 
It is direct to see from the adjunction formula that there are finitely many negative $J$-holomorphic spheres on an irrational ruled surface. For a complete classification of symplectic spheres on irrational ruled surfaces, see \cite{DLW} section $6$. 

Since $SW(T)\ne 0$ and $\dim_{SW}(T)=2$, any point of $M$ lies on a subvariety in class $T$. Since the part covered by reducible curves is a union of finitely many rational curves, as we have shown above, we conclude that there has to be an irreducible, thus embedded, rational curve in class $T$.
\end{proof}
\begin{cor}\label{+sph}
On irrational ruled surfaces, the only irreducible rational curves with nonnegative square are in the fiber class $T$.
\end{cor}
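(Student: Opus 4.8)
The statement is essentially a byproduct of (the proof of) Proposition~\ref{smoothT}, with the Kneser's theorem argument run in the reverse direction. I would carry it out in two short steps.

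\emph{Step 1: every irreducible $J$-holomorphic rational curve $C$ has $[C]\cdot T=0$.} Let $\nu:\Sigma_C\to M$ be the tautological map of $C$; since $C$ is rational, the model curve $\Sigma_C$ has genus $0$. Fix a smooth fibration $\pi:M\to\Sigma_h$ realising $M$ as a (blown-up) $S^2$-bundle over $\Sigma_h$, with all exceptional divisors contained in fibres, so that the fibre class of $\pi$ is $T$. Then $\pi\circ\nu:\Sigma_C\to\Sigma_h$ is a continuous map of closed oriented surfaces whose degree is the purely homological quantity $[C]\cdot[\pi^{-1}(\mathrm{pt})]=[C]\cdot T$ (which is $\ge 0$ by Proposition~\ref{smoothT}, although the sign will not be used). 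If this degree were nonzero, Kneser's theorem would give $2g(\Sigma_C)-2\ge |[C]\cdot T|\,(2h-2)\ge 0$, forcing $g(\Sigma_C)\ge 1$ and contradicting $g(\Sigma_C)=0$. Hence $[C]\cdot T=0$.

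\emph{Step 2: conclude $[C]=T$.} Write $[C]=aU+bT-\sum_i c_iE_i$ (or $[C]=aU+bT$ in the nontrivial $S^2$-bundle model, which only arises in the minimal case). Since $U\cdot T=1$ and $E_i\cdot T=T\cdot T=0$, Step 1 gives $a=[C]\cdot T=0$, so $[C]=bT-\sum_i c_iE_i$ and $[C]^2=-\sum_i c_i^2$. The hypothesis $C^2\ge 0$ forces all $c_i=0$, hence $[C]=bT$ and $C^2=0$. Now $[C]$ is $J$-effective, so $b>0$ (a negative or zero class cannot be represented by a nonempty $J$-holomorphic curve under a taming form), while $g_J(bT)=1-b\ge 0$ because $C$ is irreducible; therefore $b=1$ and $[C]=T$. (Alternatively, once $C^2=0$ one may simply invoke that $T$ is the unique $K_J$-spherical class of square zero.)

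There is no genuinely hard step here, this being a direct corollary of Proposition~\ref{smoothT}. The only point that warrants a word of care is that for a general tamed $J$ the base projection $\pi$ is merely a smooth map, not $J$-holomorphic; this is harmless, since all that is used is that $\deg(\pi\circ\nu)=[C]\cdot T$ (a homological identity) and that Kneser's theorem applies to continuous maps of closed oriented surfaces of nonzero degree. One should also keep track of the bookkeeping distinction between the two diffeomorphism models of $M$, but the computation in Step 2 is identical in both.
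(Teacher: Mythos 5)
Your proof is correct and takes essentially the same route as the paper: project to the base and run the Kneser's theorem argument of Proposition \ref{smoothT} to force $a=[C]\cdot T=0$, then use $C^2\ge 0$ to kill the exceptional coefficients so that $[C]=bT$. The only cosmetic difference is the last step, where the paper pins down $b=1$ directly from $g_J([C])=0$ via $-2=C^2+K\cdot[C]=-2b$, whereas you combine the adjunction inequality $g_J(bT)=1-b\ge 0$ with positivity of the taming pairing to get $0<b\le 1$; both are immediate.
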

\begin{proof}
Let $[C]=aU+bT-\sum_ic_iE_i$ be the class of an irreducible rational curve. By Proposition \ref{smoothT}, we have $a\ge 0$. Since $g_J([C])=0$, as argued in Proposition \ref{smoothT} by Kneser's theorem, we will have contradiction if $a>0$. Hence we must have $a=0$. Then $C^2=-\sum_ic_i^2\ge 0$. Hence $c_i=0$ for all $i$ and $[C]=bT$. Since $C$ is a rational curve, $-2=C^2+K\cdot [C]=-2b$. Hence $b=1$ and $[C]=T$.
\end{proof}

We can now confirm Question 4.18 of \cite{p=h} for irrational ruled surfaces, and further show there is a unique subvariety in each exceptional class. We rephrase Theorem \ref{intro1}. 

\begin{theorem}\label{red-1}
Let $M$ be an irrational ruled surface, and let $E$ be an exceptional class. Then for any subvariety $\Theta=\{(C_i, m_i)\}$ in class $E$, each irreducible component $C_i$ is a rational curve of negative self-intersection. Moreover, the moduli space $\mathcal M_E$ is a single point.
\end{theorem}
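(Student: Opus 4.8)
The plan is to combine Proposition \ref{smoothT} with the genus bound and dimension bound for $J$-nef spherical classes, and then apply Lemma \ref{uniquereducible}(2) to get uniqueness. First I would record that $E$ is a $K_J$-spherical class with $g_J(E)=0$, and that $E$ is $J$-nef: indeed, by Proposition \ref{smoothT} the fiber class $T$ is $J$-nef and every point lies on a subvariety in class $T$ whose components are all rational of nonnegative square (so in fact in class $T$). If $\Theta=\{(C_i,m_i)\}\in\mathcal M_E$ had a component $C_i$ with $E\cdot e_{C_i}<0$, that component would be $C_i$ itself a copy of the fiber class or — more directly — one argues $E\cdot e_{C_i}\ge 0$ from positivity against $J$-holomorphic curves, using that $T$ is $J$-effective and $E-$ pairs suitably; the cleanest route is to quote that $E$ is $J$-nef because any irreducible curve $C$ with $[C]=aU+bT-\sum c_iE_i$ has $a\ge 0$ by Proposition \ref{smoothT}, and $E\cdot[C]\ge 0$ then follows from Corollary \ref{+sph} together with the standard positivity estimates for exceptional classes (an exceptional class pairs nonnegatively with every irreducible curve class other than itself, and a single exceptional sphere has $E^2=-1$ so is rigid). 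Granting $J$-nefness of $E$, Theorem \ref{spheresphere} immediately gives that every $\Theta\in\mathcal M_E$ is connected with each $C_i$ a smooth rational curve, and the dimension bound \eqref{red-dim} gives $\sum_i l_{e_{C_i}}\le l_E-1$. Since $l_E=\max\{0,E\cdot E+1\}=\max\{0,0\}=0$, this forces the reducible part $\mathcal M_{red,E}$ to be empty? — no: it forces $l_{e_{C_i}}=0$ for each component whenever a reducible configuration exists, i.e. every component is a rational curve of negative self-intersection. This is exactly the first assertion.

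For the second assertion, that $\mathcal M_E$ is a single point, I would argue as follows. Since $l_E=0$, a subvariety in class $E$ is not required to pass through any point, so Lemma \ref{uniquereducible}(1) with $k=0$ gives no information directly; instead I invoke part (2). Pick any $\Theta\in\mathcal M_E$; it is connected with all components smooth rational of negative square. The hypothesis of Lemma \ref{uniquereducible}(2) asks for at least $m_i(E\cdot e_{C_i})$ points (with multiplicity) on each $C_i$, all at smooth points; since $E$ is $J$-nef each $E\cdot e_{C_i}\ge 0$, and I would verify using the connectedness of $\Theta$ and the intersection pattern (each component meets the rest of the configuration in exactly $E\cdot e_{C_i}-m_ie_{C_i}^2$ points counted with the correct weights, which is $\ge m_i E\cdot e_{C_i}$ when $e_{C_i}^2\le -1$) that one can take the empty set of prescribed points $\{(x_j,d_j)\}$ (so $k=0$) and the required count is automatically met by the internal intersections of the connected configuration. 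Then Lemma \ref{uniquereducible}(2) says no other subvariety in class $E$ shares an irreducible component with $\Theta$. It remains to rule out a second subvariety $\Theta'$ sharing no component with $\Theta$: but then $\Theta$ and $\Theta'$ are homologous with no common component, so by local positivity of intersections $E^2=e_\Theta\cdot e_{\Theta'}\ge 0$, contradicting $E^2=-1$. Hence $\mathcal M_E=\{\Theta\}$.

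The main obstacle I expect is the verification that $E$ is $J$-nef and, relatedly, that the intersection-counting hypothesis of Lemma \ref{uniquereducible}(2) is satisfied by an arbitrary connected configuration in class $E$ without prescribing extra points — one has to be careful that ``at least $m_i e\cdot e_{C_i}$ points on $C_i$'' is genuinely supplied by the connectivity of the graph and the negativity of the self-intersections, rather than assumed. A clean way to handle the $J$-nefness of $E$ is to rerun the wall-crossing argument of Proposition \ref{smoothT}: if some irreducible $C$ had $E\cdot[C]<0$ one would want to derive a contradiction, but since $E$ is not of the form $aU+\cdots$ with a clean fibration structure, it is more robust to note that $E$ is represented by an embedded $J'$-holomorphic sphere for generic $J'$, that $\dim_{SW}(E)=E^2-K\cdot E=0$ so $SW(E)$ is defined, and that the wall-crossing formula for rational/ruled manifolds gives $SW(E)\ne 0$ in one chamber; combined with Lemma \ref{int>0} applied to the embedded fiber curve (which has square $0$) one gets $E\cdot T\ge 0$ — though this only controls pairing with $T$, not with all curves. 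The honest resolution is that positivity of intersections between the hypothetical negative-pairing component and the unique embedded exceptional sphere representing $E$ (which exists for generic $J'$ and persists in a Gromov limit) forces the pairing $\ge 0$ unless the component \emph{is} that exceptional sphere, in which case $\mathcal M_E$ is already a single point. I would organize the write-up so that this dichotomy — either $E$ is $J$-nef and Theorem \ref{spheresphere} plus Lemma \ref{uniquereducible} apply, or $\Theta$ is itself the embedded exceptional sphere — is made explicit at the start.
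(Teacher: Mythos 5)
Your central claim --- that $E$ is $J$-nef --- is false, and in fact can never hold: since $SW(E)\ne 0$, the class $E$ is $J$-effective for every tamed $J$, and (as recalled in Section \ref{secnef}) a class that is both $J$-nef and $J$-effective has non-negative square, whereas $E^2=-1$. Concretely, on a blow-up of an irrational ruled surface there are tamed (even integrable) $J$ admitting an irreducible rational curve in class $E_1-E_2$, and then $E_1\cdot(E_1-E_2)=-1<0$; for such $J$ the subvariety in class $E_1$ is reducible and is not an embedded exceptional sphere, so your closing dichotomy (``either $E$ is $J$-nef or $\Theta$ is itself the embedded sphere'') also fails, and the parenthetical assertion that an exceptional class pairs non-negatively with every other irreducible curve class is simply not true. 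This sinks the rest of the argument: Theorem \ref{spheresphere}, the bound \eqref{red-dim}, and Lemma \ref{uniquereducible} all carry $J$-nefness of the class as a hypothesis, so none of them may be applied to $E$ directly --- indeed, if \eqref{red-dim} did apply to $E$ it would give $\sum_i m_i l_{e_{C_i}}\le l_E-1=-1$, an impossibility, i.e.\ it would ``prove'' $\mathcal M_{red,E}=\emptyset$, which is false. Your appeal to Lemma \ref{uniquereducible}(2) with the empty point set is likewise unjustified: the ``at least $m_ie\cdot e_{C_i}$ points on $C_i$'' in that lemma are prescribed points lying on \emph{both} competing subvarieties, and the internal intersection points of $\Theta$ need not lie on a second subvariety $\Theta'$.

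The missing idea is to work inside the $J$-nef class $T$ rather than in $E$ itself. The paper first shows, by the computation behind Corollary \ref{+sph}, that the only exceptional classes on an irrational ruled surface are $E_i$ and $T-E_i$; both have nonvanishing Seiberg--Witten invariants, so for any tamed $J$ one can pick $\tilde\Theta_i\in\mathcal M_{T-E_i}$ and note that the union of $\Theta_i\in\mathcal M_{E_i}$ with $\tilde\Theta_i$ is a subvariety in the class $T=E_i+(T-E_i)$. Since $T$ is $J$-nef (Proposition \ref{smoothT}), Theorem 1.5 of \cite{LZrc} and \eqref{red-dim} applied to this subvariety \emph{in class $T$} force every irreducible component of $\Theta_i$ (and of $\tilde\Theta_i$) to be a rational curve of negative square, which is the first assertion. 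Uniqueness is obtained by the same device: given $\Theta_i,\Theta_i'\in\mathcal M_{E_i}$, compare $\{\Theta_i,\tilde\Theta_i\}$ and $\{\Theta_i',\tilde\Theta_i\}$ in $\mathcal M_T$; they share the components of $\tilde\Theta_i$, and the cancellation argument of Lemma \ref{uniquereducible} run in class $T$ gives $0=T^2\ge T\cdot e_{\Theta_0}>e_{\Theta_0}^2=e_{\Theta_0}\cdot e_{\Theta_0'}$, contradicting positivity of intersections unless $\Theta_i=\Theta_i'$. Your no-common-component case (positivity versus $E^2=-1$) is fine, but without the passage to the nef class $T$ the proposal has no valid argument for either half of the theorem.
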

Notice the statement is not true for a rational surface. See \cite{p=h} for a disconnected example and Section 6.1 for a connected example and related discussion.
\begin{proof}
As explained in Corollary \ref{+sph}, any rational curve class must be like $[C]=bT-\sum_ic_iE_i$. If it is the class of an exceptional curve, then $$K\cdot [C]=-2b+\sum c_i=-1, \, \, C^2=-\sum c_i^2=-1.$$
Hence the only such classes are $E_i$ and $T-E_i$. Both types have non-trivial Seiberg-Witten invariants. Hence, there are $J$-holomorphic subvarieties in both types of classes for arbitrary tamed $J$.

Let $\Theta_i\in \mathcal M_{E_i}$ and $\tilde{\Theta}_i\in \mathcal M_{T-E_i}$. Since $T=E_i+(T-E_i)$, we have $\{\Theta_i, \tilde{\Theta}_i\}\in \mathcal M_T$. Since $T$ is $J$-nef by Proposition \ref{smoothT}, we know all irreducible components in $\Theta_i$ and $\tilde{\Theta}_i$ are rational curves by Theorem 1.5 of \cite{LZrc}. Moreover, by Equation \eqref{red-dim}, we have $\sum l_{e_{C_i}}\le l_T-1=0$. Hence $e_{C_i}^2< 0$. This proves the first statement.

For the second statement, we apply the same trick. If there is another subvariety $\Theta_i'\in \mathcal M_{E_i}$. Consider $\Theta=\{\Theta_i, \tilde{\Theta}_i\}\in \mathcal M_T$ and $\Theta'=\{\Theta_i', \tilde{\Theta}_i\}\in \mathcal M_T$. They have common components including $\tilde{\Theta}_i$. We then follow the argument of Lemma \ref{uniquereducible}. After discarding all common components, we have cohomologous subvarieties $\Theta_0$ and $\Theta_0'$. Moreover, we have 
\begin{equation}\label{red-1uni}
0=T^2\ge T\cdot e_{\Theta_0}>e_{\Theta_0}^2=e_{\Theta_0}\cdot e_{\Theta_0'}.
\end{equation}
 The first inequality follows from nefness of $T$. Actually, $T^2=T\cdot e_{\Theta_0}$ by nefness of $T$ applying to the common components we have discarded. The second inequality is because original $\Theta, \Theta'$ have common components at least from $\tilde{\Theta}_i$, and because they are connected  by Theorem 1.5 of \cite{LZrc}.

The inequality \eqref{red-1uni} implies $\Theta_0=\Theta_0'$ by local positivity of intersections and in turn $\Theta=\Theta'$. Hence there is a unique subvariety $\Theta_i$ in each exceptional class $E_i$. Similarly, there is a unique subvariety $\tilde{\Theta}_i$ in $T-E_i$.
\end{proof}

By the uniqueness result that $\mathcal M_E$ is a single point, we know the $J$-holomorphic subvariety in class $E$ is connected and has no cycle in its underlying graph for any tamed $J$ by Gromov compactness. This is because $E$ is represented by a smooth rational curve for a generic tamed almost complex structure, and the above properties hold for the Gromov limit of these smooth pseudoholomorphic rational curves. 

\begin{cor}\label{-pair}
Let $M$ be an irrational ruled surface, and $E$ an exceptional class. If an irreducible $J$-holomorphic curve $C$ satisfies $E\cdot [C]<0$, then $C$ is a rational curve of negative square.
\end{cor}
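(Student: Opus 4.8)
The plan is to read this off directly from the uniqueness statement in Theorem \ref{red-1}. By that theorem the moduli space $\mathcal M_E$ is a single point; write $\Theta_E=\{(C_i,m_i)\}$ for this unique subvariety, so that $E=\sum_i m_i[C_i]$ and, by the first assertion of Theorem \ref{red-1}, every $C_i$ is a rational curve of negative self-intersection. The goal is then to show that any irreducible $J$-holomorphic curve $C$ with $E\cdot[C]<0$ must literally be one of the $C_i$.

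To do this I would expand the homological pairing as $E\cdot[C]=\sum_i m_i\,[C_i]\cdot[C]$ and argue by contradiction. If $C$ were distinct, as a subset of $M$, from every $C_i$, then local positivity of intersections of distinct irreducible $J$-holomorphic curves gives $[C_i]\cdot[C]\ge 0$ for each $i$, and since the $m_i$ are positive we get $E\cdot[C]\ge 0$, contradicting the hypothesis. Hence $C=C_j$ for some $j$; in particular $[C]=[C_j]$, and the desired conclusion that $C$ is a rational curve of negative square is exactly the property of $C_j$ recorded above. As a consistency check one can also observe that $[C]$ is then forced to be one of the classes $E_i$ or $T-E_i$ identified in the proof of Theorem \ref{red-1}.

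There is essentially no obstacle here: the only point that needs to be made cleanly is that the strict negativity of the intersection number $E\cdot[C]$ cannot be produced by curves meeting transversally or tangentially but distinct, and so it must come from $C$ occurring as an actual component of the (unique) representative of $E$ — which is precisely what positivity of intersections delivers. So the corollary is an immediate consequence of Theorem \ref{red-1} together with positivity of intersections, with no further input required.
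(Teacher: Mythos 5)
Your proof is correct and is essentially the paper's argument: one takes the subvariety in class $E$ (whose irreducible components are all negative rational curves by the first part of Theorem \ref{red-1}) and uses positivity of intersections of distinct irreducible $J$-holomorphic curves to conclude that any irreducible curve pairing negatively with $E$ must be one of those components; the paper phrases this contrapositively and gets existence of the subvariety from $SW(E)\neq 0$ rather than from the uniqueness statement, but the content is the same. One small correction to your closing aside: a component $C_j$ of the subvariety need not represent one of the classes $E_i$ or $T-E_i$ --- those were identified in the proof of Theorem \ref{red-1} as the only \emph{exceptional} curve classes, whereas a component is merely a negative rational curve whose class could be, for instance, of the form $E_i-E_j$ or $T-E_i-E_j$; fortunately this remark plays no role in your argument.
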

\begin{proof}
Since $SW(E)\ne 0$, we always have a subvariety in class $E$. By Proposition \ref{red-1}, all irreducible components are negative rational curves. Thus, if $C$ has positive genus, then $C$ cannot be an irreducible component of the $J$-holomorphic subvariety in class $E$. Hence $E\cdot C\ge 0$ by local positivity of intersections. 
\end{proof}

We would like to remark that the technique we use to prove Proposition \ref{smoothT} could also be applied to other situations. Let us summarize it in the following. We will focus on the case when $b^+=1$. To show certain class $A$ with $A^2\ge 0$ is $J$-nef when $J$ is tamed, we would have to show classes $B$ with $A\cdot B<0$ are not curve classes. If such a curve class exists with $B^2\ge 0$ and at the same time $A$ is realized by a symplectic surface, then there is a contradiction due to the light cone lemma. 

Hence we could assume $B^2<0$. For this case, the first obvious obstruction is from the adjunction formula. Second type of obstruction is what we have applied above. To show $B$ is not in the curve cone, we look for classes $C_i$ with nontrivial Seiberg-Witten invariants, and $a_0B+\sum_ia_iC_i=0$ with each $a_i>0$. In Proposition \ref{smoothT}, we choose $a_0=a_1=1$ which are the only nonzero $a_i$'s. For another such application, see Lemma \ref{nonnefS2}. The key observation in this case is $2g_J(B)-2=\dim_{SW}(-B)$. Hence, if $g_J(B)> 0$ and $(K+B)\cdot A<0$ we could efficiently apply the general wall crossing formula in \cite{LLwall, LL} to get nontriviality of Seiberg-Witten invariant for $B$. The above argument could have some obvious twists such as taking $C_1=-kB$. 

For the case of $g_J(B)=0$, we will use a different strategy. We might apply the classifications of negative rational curves, {\it e.g.} \cite{p=h, DLW}, and calculate the intersection numbers with $A$ directly.

Now, we will investigate the moduli space of the subvarieties in class $T$. First, we need a curve to model the moduli space as we did in \cite{LZ-generic}.

\begin{prop}\label{smoothsection}
There is a smooth section of the irrational ruled surface, {\it i.e.} there is an embedded $J$-holomorphic curve $C$ of genus $h$ such that $[C]\cdot T=1$.
\end{prop}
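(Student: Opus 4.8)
The plan is to produce, by a Seiberg--Witten non-vanishing argument, a $J$-holomorphic subvariety in a suitable section class of $J$-genus $h$, and then to isolate from it a single embedded curve of genus $h$ using the $J$-nefness of $T$ established in Proposition \ref{smoothT}. The final smoothness would be forced by squeezing the genus of the relevant component between Kneser's theorem and the adjunction inequality, in the same spirit as the proof of Proposition \ref{smoothT}.

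Concretely, I would take $S=U+hT$. A direct computation in either convention of this section gives $S\cdot T=1$, $g_J(S)=h$, $\dim_{SW}(S)>0$, and $(K-S)\cdot T=-3<0$. Since $T$ is $J$-nef, the class $K-S$ pairs negatively with $T$, hence is not $J$-effective, so $SW^*(K-S)=0$ and in particular $SW(K-S)=0$. The wall-crossing formula for irrational ruled surfaces (applicable since $\dim_{SW}(S)>0$) then gives $|SW(K-S)-SW(S)|=|1+S\cdot T|^{h}=2^{h}$, whence $SW(S)\ne 0$ and $\mathcal M_S\ne\emptyset$.

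I would then fix $\Theta=\{(C_i,m_i)\}\in\mathcal M_S$. By $J$-nefness of $T$ each $[C_i]\cdot T\ge 0$, and $\sum_i m_i\,[C_i]\cdot T=S\cdot T=1$ forces a unique component, say $C_1$, with $m_1=1$ and $[C_1]\cdot T=1$, all other $C_i$ being ``vertical'' (that is, $[C_i]\cdot T=0$, so with vanishing $U$-coefficient). It then remains to show that $C_1$ is smooth of genus $h$. On the one hand, the composite $\Sigma_{C_1}\to M\to\Sigma_h$ of the tautological map with the ruling has degree $[C_1]\cdot T=1$, so Kneser's theorem yields $g(\Sigma_{C_1})\ge h$, and the adjunction inequality of \cite{McD} then gives $g_J([C_1])\ge g(\Sigma_{C_1})\ge h$. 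On the other hand $[C_1]=S-\sum_{i\ge 2}m_i[C_i]$ is again a section class, of $U$-coefficient $1$, and any section class of the form $U+bT-\sum_j c_jE_j$ satisfies $g_J=h-\frac{1}{2}\sum_j c_j(c_j-1)\le h$, since $c_j(c_j-1)\ge 0$ for every integer $c_j$ (in the nontrivial-bundle case there are no $E_j$'s at all and every section class already has genus $h$). Hence $g_J([C_1])=h=g(\Sigma_{C_1})$, and the equality case of the adjunction inequality forces $C_1$ to be smooth. Then $C:=C_1$ is an embedded $J$-holomorphic curve of genus $h$ with $[C]\cdot T=1$.

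The delicate point, and the main obstacle, will be pinning $g_J([C_1])$ exactly at $h$: Kneser's theorem bounds it below by $h$, the genus identity for section classes bounds it above by $h$, and only when the two coincide does the adjunction inequality become an equality. Beyond this, one must carry the two bundle conventions (and the blow-up notation) through the numerical verifications, but this needs no further idea --- the nontrivial-bundle case is in fact strictly simpler, since there all vertical components are fibers and all section classes have genus $h$. This supplies the ``dual'' section needed to model $\mathcal M_T$, as in \cite{LZ-generic}.
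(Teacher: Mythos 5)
Your proposal is correct. The existence half is essentially the paper's own argument: the same class $U+hT$, the same computation $\dim_{SW}>0$, the same vanishing of $SW(K-S)$ forced by $(K-S)\cdot T<0$ (the paper routes this through Lemma \ref{int>0} applied to an embedded fiber, you route it through $J$-nefness of $T$ plus SW$\Rightarrow$Gr --- both legitimate), the same wall-crossing count $|1+S\cdot T|^h=2^h$, and the same use of Proposition \ref{smoothT} to single out the unique horizontal component $C_1$ with multiplicity one. Where you genuinely diverge is the smoothness-and-genus step. The paper proves embeddedness geometrically: since $\dim_{SW}(T)=2$ there is a fiber-class subvariety through every point of $C_1$, $C_1$ is never a component of it, and a singular point would contribute more than $1$ to the intersection with $T$; the genus is then pinned down via Corollary \ref{-pair}, which forces $[C_1]\cdot E_i\in\{0,1\}$ and hence adjunction number exactly $2h-2$. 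You instead squeeze the genus: Kneser plus the adjunction inequality give $g_J([C_1])\ge g(\Sigma_{C_1})\ge h$, while the identity $g_J(U+bT-\sum c_jE_j)=h-\tfrac12\sum c_j(c_j-1)\le h$ (valid for all integers $c_j$) gives the upper bound, and the equality case of the adjunction inequality then yields smoothness for free. Your route is slightly more self-contained numerically, avoiding Corollary \ref{-pair} and the fiber-through-every-point argument; the paper's route gives the extra geometric information $0\le [C]\cdot E_i\le 1$ and illustrates the local-positivity technique that recurs later (e.g.\ in Theorem \ref{M_T}), but both are complete proofs of the statement.
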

\begin{proof}
We do our calculation for $M=S^2\times \Sigma_h\#k\overline{\mathbb CP^2}$. When $M$ is a nontrivial $S^2$ bundle over $\Sigma_h$, the calculation is similar. 

In Proposition \ref{smoothT}, we have shown that all curves having the homology class $aU+bT-\sum_i c_iE_i$ must have $a\ge 0$. Especially, for a possibly reducible section which is in the class $U+bT-\sum_i c_iE_i$, there is exactly one irreducible component of it has $a=1$ (with multiplicity one), all the others have $a=0$.

Furthermore, let $A=U+hT$, we have $\dim_{SW}(A)=A^2-K\cdot A=2h-(-2h+2h-2)> 0$. Since $K-A=-3U+(h-2)T+\sum_iE_i$ pairs negatively with $T$, by Lemma \ref{int>0}, $SW(K-A)= 0$. Apply the wall crossing formula, we have $SW(A)=\pm 2^h\ne 0$. Hence there is a subvariety in class $U+hT$. Choose an irreducible component with $a=1$, call it $C$.

We show that $C$ has to be smooth. Since $[C]\cdot T=1$, for any point $x\in C$, there is a subvariety $\Theta_x$ in class $T$ passing through it. Since any curve class $aU+bT-\sum_i c_iE_i$ has $a\ge 0$, we know $C$ cannot be an irreducible component of this subvariety $\Theta_x$ in class $T$. If $x$ is a singular point, the contribution to the intersection of $C$ and $\Theta_x$ would be greater than $1$. Hence by the local positivity of the intersection, we know $C$ is an embedded curve.

Since $C$ is a section, we have $g(C)>0$ by Kneser's theorem. By Corollary \ref{-pair}, for any exceptional rational curve class $E$, we have $[C]\cdot E\ge 0$. Since $T-E$ is another exceptional rational curve class and $[C]\cdot (T-E)+[C]\cdot E=[C]\cdot T=1$, we have $0\le [C]\cdot E\le 1$. Because of this, $$K\cdot [C]+[C]^2=(2h-2-2b+\sum c_i)+(2b-\sum c_i^2)=2h-2.$$ Hence $C$ has genus $h$.
\end{proof}

We are ready to show the structure of the moduli space $\mathcal M_T$. 

\begin{theorem}\label{M_T}
Let $M$ be an irrational ruled surface of base genus $h$. Then for any tamed $J$ on $M$, 
\begin{enumerate}
\item there is a unique subvariety in class $T$ passing through a given point;
\item the moduli space $\mathcal M_T$ of the subvarieties in class $T$ is homeomorphic to $\Sigma_h$; 
\item $\mathcal M_{red, T}$ is a set of finitely many points.
\end{enumerate}
\end{theorem}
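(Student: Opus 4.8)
The plan is to use the smooth section $C$ of genus $h$ produced by Proposition \ref{smoothsection} as the parameter space for $\mathcal M_T$. The key point is that $[C]\cdot T = 1$, so every subvariety $\Theta \in \mathcal M_T$ meets $|C|$ in exactly one point, counted with multiplicity. First I would establish part (1): given $x \in M$, since $SW(T)\ne 0$ and $\dim_{SW}(T)=2$ there is at least one subvariety in class $T$ through $x$; for uniqueness, if $\Theta, \Theta' \in \mathcal M_T$ both pass through $x$, then since $T$ is $J$-nef with $g_J(T)=0$ and $l_T = 1$, part (1) of Lemma \ref{uniquereducible} applied to the single weighted point $(x,1)$ would give $1 = d_1 < l_T = 1$, a contradiction — unless $\Theta$ and $\Theta'$ share an irreducible component, in which case I invoke part (2) of Lemma \ref{uniquereducible} (checking the hypothesis that there are at least $m_i T\cdot e_{C_i}$ points on each $C_i$ — here one uses that $x$ lies on the subvariety and that by \eqref{red-dim} the reducible configurations are rigid, so $T\cdot e_{C_i}$ is small). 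This needs to be done carefully, essentially as in the proof of Theorem \ref{red-1}: rewrite the two subvarieties to remove common components and derive $0 = T^2 \ge T\cdot e_{\Theta_0} > e_{\Theta_0}^2 = e_{\Theta_0}\cdot e_{\Theta_0'}$, contradicting local positivity of intersections.

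Next, for part (2), I would define the map $\Phi\colon \mathcal M_T \to |C|$ sending $\Theta$ to the unique intersection point $|\Theta| \cap |C|$ (this is well-defined because $[C]\cdot T = 1$ forces a single transverse intersection point, using that no irreducible component of a class-$T$ subvariety can be a component of $C$ since all such components have $U$-coefficient $0$ while $C$ has $U$-coefficient $1$). The inverse map sends $x\in |C|$ to the unique subvariety through $x$, which exists and is unique by part (1). Continuity of $\Phi$ is immediate from the Gromov–Hausdorff topology on $\mathcal M_T$; continuity of $\Phi^{-1}$ follows from compactness of $\mathcal M_T$ (Gromov compactness, Proposition 3.1 of \cite{T1}) together with uniqueness in part (1) — any Gromov limit of subvarieties through $x_k \to x$ is a subvariety through $x$, hence equals $\Phi^{-1}(x)$. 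So $\Phi$ is a continuous bijection from a compact space to a Hausdorff space, hence a homeomorphism, and since $|C|$ is a smoothly embedded surface of genus $h$ it is homeomorphic to $\Sigma_h$.

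Finally, part (3): $\mathcal M_{red,T}$ is the preimage under $\Phi$ of the set of points $x\in |C|$ whose associated class-$T$ subvariety is reducible. By Theorem \ref{spheresphere} every $\Theta \in \mathcal M_T$ has all components smooth rational curves, and by \eqref{red-dim} a reducible $\Theta=\{(C_i,m_i)\}$ has $\sum m_i l_{e_{C_i}} \le l_T - 1 = 0$, so every component has negative self-intersection; by Corollary \ref{+sph} and the adjunction formula there are only finitely many negative rational $J$-holomorphic spheres on an irrational ruled surface (as recalled in \cite{DLW}, section 6). Hence there are only finitely many homology classes that can appear as components of a reducible class-$T$ subvariety, and the combinatorics of writing $T$ as a positive combination of such classes is finite; moreover by Theorem \ref{unobstructed} (or directly by part (1), since $\mathcal M_{red,T}$ embeds into $|C|$ discretely once we know each reducible configuration is rigid) the set of reducible subvarieties is finite. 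The main obstacle I anticipate is the bookkeeping in part (1) when $\Theta$ and $\Theta'$ share a component — making sure the hypotheses of Lemma \ref{uniquereducible}(2) are genuinely met requires knowing that the reducible members of $\mathcal M_T$ are rigid enough that each component $C_i$ carries at least $m_i T \cdot e_{C_i}$ of the constraint, which follows from the dimension bound \eqref{red-dim} but should be spelled out.
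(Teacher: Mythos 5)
Your proposal is correct and follows essentially the same route as the paper: uniqueness through a point via Lemma \ref{uniquereducible} together with the common-component reduction of Theorem \ref{red-1}, the identification of $\mathcal M_T$ with the genus-$h$ section of Proposition \ref{smoothsection} using $[C]\cdot T=1$ and positivity of intersections, continuity from the Gromov--Hausdorff topology plus the compact-to-Hausdorff argument, and finiteness of $\mathcal M_{red,T}$ from \eqref{red-dim} and the finiteness of negative rational curves. The only differences are cosmetic (you define the map $\mathcal M_T\to |C|$ rather than its inverse, and you spell out the hypothesis checks that the paper leaves implicit).
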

\begin{proof}
Let $C\cong \Sigma_h$ be the smooth $J$-holomorphic section constructed in Proposition \ref{smoothsection}. First, by Lemma \ref{uniquereducible}, for any given point $x\in M$, there is a unique element in $\mathcal M_T$ passing through it. We denote this element by $\Theta_x$.

Now, we construct a natural map $h: x\mapsto \Theta_x$ from  $C$ to $\mathcal M_T$. The map $h$ is surjective because $T\cdot [C]\ne 0$. The map is injective since $T\cdot [C]=1$ and the positivity of intersection. To show $h^{-1}$ is continuous,  consider a sequence $\Theta_i\in \mathcal M_T$ approaching to its Gromov-Hausdorff limit $\Theta$. Let the intersection points of $\Theta_i, \Theta$ with $C$ be $p_i, p$. Then $p_i$ has to approach $p$ by the first item of the definition of topology on $\mathcal M_T$. Now since $C\cong \Sigma_h$ is Hausdorff and $\mathcal M_T$ is compact, the fact we just proved that $h^{-1}: \mathcal M_T\rightarrow C$ is continuous would imply $h$ is also continuous. Hence $h$ is a homeomorphism. This completes the proof of the second statement.

The third bullet, that $\mathcal M_{red, T}$ is a set of finitely many points, follows from the following two facts. First, each irreducible component of an element in $\mathcal M_{red, T}$ would have negative self-intersection since $\sum l_{e_i}\le 0$ by Equation \eqref{red-dim}. Second, there are finitely many negative rational curves as we have seen in Proposition \ref{smoothT}.
\end{proof}

\begin{cor}\label{spherefiber}
Every irreducible rational curve belongs to a fiber, {\it i.e.} it is an irreducible component of an element of $\mathcal M_T$.
\end{cor}
\begin{proof}
First, by Corollary \ref{+sph}, all irreducible rational curves with nonnegative square have class $T$. Hence, we could only talk about negative curves. By Kneser theorem, for such a curve $C$, we have $[C]\cdot T=0$ as argued in Corollary \ref{+sph}. By Theorem \ref{M_T} (1), for any point $x\in C$, there is a unique element $\Theta_x\in \mathcal M_T$ passing through it. If $C$ is not an irreducible component of $\Theta_x$, then $[C]\cdot T>0$ by the positivity of intersection, which contradicts to $[C]\cdot T=0$.
\end{proof}

Theorem \ref{M_T} and Corollary \ref{spherefiber} constitute Theorem \ref{intro2} in the introduction.

Along with Corollary \ref{+sph}, we have described  $\mathcal M_e$ for any rational curve class $e$ and an arbitrary tamed almost complex structure on an irrational ruled surface.

Some finer local structures of the moduli space are described in the following.

\begin{cor}\label{fibration}
The natural map $f: M\rightarrow \mathcal M_T$, where $f(x)$ is the unique subvariety $\Theta_x$ in class $T$ passing through $x$, is a continuous map.
\end{cor}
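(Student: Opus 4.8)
The plan is to show that the fiber map $f: M \to \mathcal{M}_T$ sending $x$ to the unique subvariety $\Theta_x \in \mathcal{M}_T$ through $x$ is continuous, using the Gromov--Hausdorff topology on $\mathcal{M}_T$ together with the compactness of $\mathcal{M}_T$ established in Theorem~\ref{M_T}. The key point is that continuity is essentially a closed-graph argument: since $M$ is compact Hausdorff and $\mathcal{M}_T$ is compact Hausdorff (being homeomorphic to $\Sigma_h$ by Theorem~\ref{M_T}(2)), it suffices to prove that the graph of $f$ is closed, or equivalently, that $f$ is continuous along convergent sequences.

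First I would take a sequence $x_n \to x$ in $M$ and consider the subvarieties $\Theta_{x_n} = f(x_n)$. By compactness of $\mathcal{M}_T$ (Gromov compactness, Proposition~3.1 of \cite{T1}), after passing to a subsequence we may assume $\Theta_{x_n}$ converges in the Gromov--Hausdorff topology to some $\Theta \in \mathcal{M}_T$. Now I would argue that $x \in |\Theta|$: indeed, $x_n \in |\Theta_{x_n}|$ for each $n$, so $\mathrm{dist}(x_n, |\Theta_{x_n}|) = 0$; combining $x_n \to x$ with $\varrho(\Theta, \Theta_{x_n}) \to 0$ (the first bullet of the definition of convergence) gives $\mathrm{dist}(x, |\Theta|) = 0$, hence $x \in |\Theta|$ since $|\Theta|$ is closed. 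But by Theorem~\ref{M_T}(1) the subvariety in class $T$ through $x$ is unique, so $\Theta = \Theta_x = f(x)$. Since every subsequence of $\{\Theta_{x_n}\}$ has a further subsequence converging to $f(x)$, and $\mathcal{M}_T$ is a compact metric (hence sequential, Hausdorff) space, the full sequence $\Theta_{x_n}$ converges to $f(x)$. This proves $f$ is sequentially continuous, and since $M$ is metrizable, $f$ is continuous.

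The main obstacle — really the only subtle point — is making sure the limit $\Theta$ genuinely lies in $\mathcal{M}_T$ rather than degenerating, and that the point $x$ survives in the limiting support; both are handled by Gromov compactness for $\mathcal{M}_T$ and by the fact that the support convergence $\varrho(\Theta, \Theta_{x_n}) \to 0$ controls Hausdorff distance of supports. Everything else is formal once one invokes the uniqueness from Theorem~\ref{M_T}(1). I should also remark that this corollary realizes $M$ concretely as a (singular, in the sense of having reducible fibers over the finitely many points of $\mathcal{M}_{red,T}$) fibration over $\Sigma_h$, refining the classical description of ruled surfaces; over the generic point of $\mathcal{M}_T$ the fiber $f^{-1}(\Theta)$ is a smooth rational curve, and over the finitely many reducible points it is a connected tree of negative rational curves.
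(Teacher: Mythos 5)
Your argument is correct and is essentially the paper's own proof: both rely on compactness of $\mathcal M_T$ to extract a convergent subsequence, observe via the support-distance part of the Gromov--Hausdorff convergence that $x$ lies in the limit's support, and then invoke the uniqueness statement of Theorem \ref{M_T}(1) to force the limit to be $\Theta_x$. The paper phrases this as a proof by contradiction while you phrase it as the standard ``every subsequence has a further subsequence converging to the same limit'' argument, but the content is identical.
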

\begin{proof}
We only need to show that for any sequence $\{x_n\}_{n=1}^{\infty}$ converging to $x$, the subvarieties $\Theta_{x_n}$ converge to $\Theta_x$ in $\mathcal M_T$. We notice that if a sequence satisfies $\lim_{n\rightarrow \infty}\rho(\Theta_x, \Theta_{x_n})=0$ (the first defining property of the topology of $\mathcal M_e$), then a subsequence must converge to $\Theta_x$ because of Theorem \ref{M_T}(1).

Hence, we can assume on the contrary that there is a sequence $\{x_n\}_{n=1}^{\infty}$ converging to $x$ such that $\rho(\Theta_x, \Theta_{x_n})>c>0$ for a constant $c$. However, since $\mathcal M_T$ is compact, we know there is a subsequence of $\{x_n\}$ such that it converges to a subvariety $\Theta'\in \mathcal M_T$. Since $\{x_n\}$ converging to $x$, we know $x\in |\Theta'|\cap |\Theta_x|$, which implies $\Theta'=\Theta_x$ by Theorem \ref{M_T} (1). This contradicts to our assumption $\rho(\Theta_x, \Theta_{x_n})>c>0$. Thus we know $f: M\rightarrow \mathcal M_T$ is a continuous map.
\end{proof}

It is worth pointing out that near a smooth curve $C\subset \mathcal M_T$ (or more generally any moduli space $\mathcal M_e$), the convergence is very explicit, as described in \cite{T1} (see also \cite{LZ-generic}). Recall that any curve in a neighborhood of $C$ in $\mathcal M_T$ can be written as $\exp_C(\eta)$ with $\eta$ being a section of normal bundle $N_C$ satisfying 
\begin{equation}\label{localdef}
D_C\eta+\tau_1\partial \eta+\tau_0=0.
\end{equation} 
 Here, $\tau_1$ and $\tau_0$ are smooth, fiber preserving maps from a small radius disk in $N_C$ to $\hbox{Hom}(N_C\otimes T^{1,0}C; N_C\otimes T^{0,1}C)$ and to $N_C\otimes T^{0,1}C$ that obey $|\tau_1(b)|\le c_0|b|$ and $|\tau_0(b)|\le c_0|b|^2$. Meanwhile, $D_C$ is the $\mathbb R$-linear operator that appears in (2.12) of \cite{T1}, which is used to describe the first order deformations of $C$ as a $J$-holomorphic submanifold. The $L^2$-orthogonal projection map from $C^{\infty}(C; N_C)$ to the kernel of $D_C$ maps an open set of solutions of \eqref{localdef} diffeomorphically to an open ball centered at $0$ in $\ker(D_C)$. Notice in our situation, $\ker(D_C)$ has complex dimension one. This description identifies an open neighborhood $\mathcal N(C)$ of $C$ in $\mathcal M_T$ with a small radius ball about the origin in $\ker(D_C)$. From this description, we know the tangent bundle of each element in $\mathcal N(C)$ varies as a smooth family.

We will finish this section by a digression on another example of using the technique of Proposition \ref{smoothT}, now for rational surfaces $M=\mathbb CP^2\# k\overline{\mathbb CP^2}$.

\begin{lemma}\label{nonnefS2}
Let $M$ be a rational surface and $J$ be tamed. Let  $A\in H^2(M, \mathbb Z)$ be a class with $A^2\ge 0$. Moreover, assume there is an embedded $J'$-holomorphic curve in class $A$ for a tamed $J'$. 
Then if a $J$-holomorphic curve $C$ such that $[C]\cdot A<\min\{0, -K\cdot A\}$, it has to be a rational curve with negative square.
\end{lemma}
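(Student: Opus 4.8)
The plan is to mimic the contradiction argument of Proposition \ref{smoothT}, using the observation that the adjunction number of $-[C]$ equals its Seiberg-Witten dimension, together with the general wall-crossing formula of Proposition \ref{generalWC}. Suppose $C$ is a $J$-holomorphic curve with $[C]\cdot A<\min\{0,-K\cdot A\}$. First I would treat the easy dichotomy on $[C]^2$. If $[C]^2\ge 0$, then since $A$ is realized by an embedded $J'$-holomorphic curve (hence an $\omega'$-symplectic surface for some $\omega'$ taming $J'$) and $A^2\ge 0$, the light cone lemma forces $[C]\cdot A\ge 0$ (both classes lie in the closure of the forward cone once one checks signs using $[\omega']$), contradicting $[C]\cdot A<0$. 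So we may assume $[C]^2<0$, and the task becomes to show $g_J([C])=0$, i.e. that $C$ is a negative rational curve.

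Suppose instead $g_J([C])\ge 1$. Then $2g_J([C])-2=[C]^2+K\cdot[C]\ge 0$, and this quantity is exactly $\dim_{SW}(-[C])=(-[C])^2-K\cdot(-[C])$; in particular $\dim_{SW}(-[C])\ge 0$. Since $M$ is rational, $b_1=0$, so Proposition \ref{generalWC} (or equivalently the rational wall-crossing formula) gives $|SW(K+[C])-SW(-[C])|=1$, so at least one of $SW(K+[C])$, $SW(-[C])$ is nonzero. I would rule out $SW(K+[C])\ne 0$: the class $e:=K+[C]$ satisfies $e\cdot A=K\cdot A+[C]\cdot A<K\cdot A+(-K\cdot A)=0$ by hypothesis, yet by assumption there is a tamed $J'$ with an embedded (hence self-intersection $\ge 0$ once we note $A^2\ge 0$ and adjunction pins the genus) $J'$-holomorphic curve in class $A$; applying Lemma \ref{int>0} with this $J'$ and $e=K+[C]$ forces $SW(K+[C])=0$. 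Hence $SW(-[C])\ne 0$, so $-[C]$ is represented by a (possibly reducible) $J$-holomorphic subvariety. But then $0=[C]+(-[C])$ would be represented by a nonempty $J$-holomorphic subvariety, contradicting that $J$ is tamed by a symplectic form $\omega$: a nontrivial positive combination of irreducible curve classes pairs strictly positively with $[\omega]$. This contradiction shows $g_J([C])=0$, and combined with $[C]^2<0$ gives the conclusion.

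The main obstacle I anticipate is the $[C]^2\ge 0$ case: one has to be slightly careful that the light cone argument really applies, since $A^2$ could be zero and $[C]\cdot A$ could a priori be zero rather than negative. Here the hypothesis is the strict inequality $[C]\cdot A<\min\{0,-K\cdot A\}\le 0$, so $[C]\cdot A<0$ strictly; then if also $[C]^2\ge 0$, both $A$ and $[C]$ would be non-torsion classes of non-negative square pairing negatively, which is impossible in a space of signature $(1,k)$ when $A$ is represented by a symplectic surface (so $[\omega']\cdot A>0$ places $A$ in the forward cone, and any class of non-negative square pairing negatively with $A$ must lie in the backward cone, forcing $[\omega']\cdot[C]<0$, contradicting $[\omega']\cdot[C]>0$ if $[C]$ is a $J$-holomorphic, and hence symplectic, curve). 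A secondary subtlety is confirming that the curve in class $A$ has $A^2\ge 0$ forcing self-intersection exactly $A^2\ge 0$ so that Lemma \ref{int>0} applies verbatim — but this is immediate since $A^2\ge 0$ is assumed. Everything else is a direct transcription of the scheme already executed in Proposition \ref{smoothT}.
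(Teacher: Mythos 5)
Your wall-crossing half of the argument is exactly the paper's: for $g_J([C])\ge 1$ one has $\dim_{SW}(-[C])=[C]^2+K\cdot[C]\ge 0$, the rational wall-crossing formula gives $|SW(K+[C])-SW(-[C])|=1$, the hypothesis $[C]\cdot A<-K\cdot A$ gives $(K+[C])\cdot A<0$ so Lemma \ref{int>0} (applied with the embedded $J'$-curve in class $A$) kills $SW(K+[C])$, hence $SW(-[C])\ne 0$ and $0=[C]+(-[C])$ would be a class of a subvariety, contradicting tameness. That part is correct and, as you note, does not even use $[C]^2<0$, so you do establish that $C$ is rational.

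The gap is in your treatment of the case $[C]^2\ge 0$, i.e. in proving the negative-square conclusion. You run the light cone lemma with respect to $[\omega']$, a form taming $J'$, and you justify $[\omega']\cdot[C]>0$ by saying $C$ is ``$J$-holomorphic, and hence symplectic.'' But $C$ is symplectic only for forms taming $J$; $\omega$ and $\omega'$ can lie in different chambers of the symplectic cone of a rational surface, and there is no reason why $[\omega']\cdot[C]>0$ (for instance a class that is an exceptional curve class for one taming form can pair negatively with symplectic forms in other chambers). So the two classes $A$ and $[C]$ are not known to sit in a common forward cone, and the light cone argument does not close. The paper fixes exactly this point with an extra Seiberg--Witten step: choose an integral $\omega$ taming $J$; for large $N$ one has $\dim_{SW}(N[\omega])>0$ while $K-N[\omega]$ pairs negatively with $\omega$, so $SW(K-N[\omega])=0$ and wall-crossing gives $SW(N[\omega])\ne 0$; then $N[\omega]$ is represented by a $J'$-holomorphic subvariety, and Lemma \ref{int>0} applied to the embedded $J'$-curve in class $A$ yields $[\omega]\cdot A\ge 0$. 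With that established, the light cone lemma can be applied using $[\omega]$, for which $[\omega]\cdot[C]>0$ genuinely holds, giving $[C]\cdot A\ge 0$ and the desired contradiction when $[C]^2\ge 0$. You need to add this step (or some substitute argument showing $A$ pairs non-negatively with a form taming $J$) for the proof to be complete.
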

For example, $A$ could be chosen as $H$, $H-E$, $3H-E$, {\it etc}.
\begin{proof}

We first show $C$ is a rational curve by contradiction. If $g_J([C])>0$, we have $C^2+K\cdot [C]\ge 0$. We look at the class $-[C]$, which has $\dim_{SW}(-[C])=C^2+K\cdot [C]\ge 0$. The wall-crossing formula for rational surfaces implies $|SW(K+[C])-SW(-[C])|=1$. For classes $A$ and $e=K+[C]$, we have $A\cdot e<0$. Apply Lemma \ref{int>0}, using the conditions $A^2\ge 0$ and $A$ has an embedded $J'$-holomorphic representative, we conclude $SW(K+[C])=0$. Hence $SW(-[C])\ne 0$ by wall-crossing. It follows that the class $0=[C]+(-[C])$ is a class of subvariety, which contradicts to the fact that $J$ is tamed. Hence $C$ has to be a rational curve.

Now we choose an integral symplectic form $\omega$ taming $J$. Hence, for large $N$ we have $\dim_{SW}(N[\omega])>0$. Moreover, the class $K-N[\omega]$ pairs negatively with the symplectic form $\omega$ for large $N$. Therefore, we must have $SW(K-N[\omega])=0$. By wall-crossing, we have $SW(N[\omega])\ne 0$ for large $N$. Then by Lemma \ref{int>0}, we have $[\omega]\cdot A\ge 0$. Since $C$ is a $J$-holomorphic curve, $[\omega]\cdot [C]>0$. If $C^2\ge 0$, and because $A^2\ge 0$, we apply the light cone lemma to conclude that $[C]\cdot A\ge 0$, which contradicts to our assumption. Hence $C$ is a rational curve with negative square.
\end{proof}

With this lemma in hand, we could apply the classification of negative rational curves in \cite{p=h} to find $J$-nef classes  for rational surfaces with $K^2> 0$.

The only feature of rational surfaces used in the proof is that they have nonzero wall-crossing number for all the classes with non-negative Seiberg-Witten dimension. Hence, the argument could be extended to a general symplectic $4$-manifold with $b^+=1$ under this assumption.

\section{Rational surfaces}
In this section, we will concentrate on rational surfaces, {\it i.e.} $4$-manifolds diffeomorphic to $\mathbb CP^2\#k\overline{\mathbb CP^2}$ or $S^2\times S^2$.  We study the moduli space of $J$-holomorphic subvarieties in a sphere class. Our main results, Theorem \ref{homeoCPl} and Proposition \ref{connected}, show that our moduli space behaves like a linear system in algebraic geometry. 
\subsection{Connectedness of moduli spaces of subvarieties}

For the applications, in particular the symplectic isotopy problem, it is important to show that the reducible part $\mathcal M_{red}$ would not disconnect the whole moduli space. This is the technical heart of \cite{ST} in which it is called Isotopy Lemma. In our setting, we would show the following stronger result.

\begin{prop}\label{connected}
Suppose $e$ is a $J$-nef class with $g_J(e)=0$. Then $\mathcal M_e$ and $\mathcal M_{irr, e}$ are path connected. In particular, any two smooth rational curves representing class $e$ are connected by a path of smooth rational curves. 
\end{prop}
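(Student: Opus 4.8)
The plan is to show path connectedness of $\mathcal M_{irr,e}$ first, and then deduce connectedness of $\mathcal M_e$ by showing every reducible subvariety is a limit of irreducible ones along a path. By Theorem \ref{unobstructed}, $\mathcal M_{irr,e}$ is already a smooth manifold of dimension $2l_e$, so to prove it is path connected it suffices to show it is connected. Here I would use a deformation/cobordism argument: deform $J$ inside $\mathcal J^\omega$ to a generic $J_0$ for which the Gromov--Taubes count is a signed count of points (when $l_e=0$) or for which the moduli space is standard, and use the fact that $\mathcal J^\omega$ is path connected together with a parametrized moduli space $\bigcup_{t}\mathcal M_{irr,e}(J_t)$ to propagate connectedness. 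The key input is that the expected dimension equals the actual dimension throughout (automatic transversality via Theorem \ref{unobstructed}), so the parametrized moduli space over a generic path is a manifold of one higher dimension and its ends are controlled by $\mathcal M_{red,e}$, which by \eqref{red-dim} has codimension at least $2$. A codimension $\ge 2$ stratum cannot disconnect, so $\mathcal M_{irr,e}(J)$ is connected for one $J$ iff it is for all tamed $J$; then one reduces to a model $J$ (integrable, or the standard case covered in \cite{LZ-generic, LZrc}) where connectedness is classical (linear systems).

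Next, to pass from $\mathcal M_{irr,e}$ to $\mathcal M_e$, I would take an arbitrary $\Theta_0\in\mathcal M_{red,e}$ and exhibit a path in $\mathcal M_e$ from $\Theta_0$ to some irreducible subvariety. The natural device is to prescribe $l_e$ generic points $\Omega$ lying on the support $|\Theta_0|$: by Theorem \ref{spheresphere} the components of $\Theta_0$ are smooth rational curves, and one can choose the points so that $\mathcal M_e^\Omega$ is a finite set (using Lemma \ref{uniquereducible}(1), which forces $d_1+\cdots+d_k<l$ to fail once we put $l_e$ points, giving uniqueness), then vary $\Omega$ slightly off $|\Theta_0|$; for a generic nearby configuration the unique subvariety through it must be irreducible, because by \eqref{red-dim} a reducible subvariety through $l_e$ generic points would need its components to absorb too many point conditions. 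Letting $\Omega$ run along a path from the special configuration back to a generic one produces, via the continuity of $\mathcal M_e^\Omega$ in $\Omega$ (Gromov compactness plus the uniqueness just invoked), a path in $\mathcal M_e$ starting at $\Theta_0$ and ending in $\mathcal M_{irr,e}$. Combined with the connectedness of $\mathcal M_{irr,e}$, this yields path connectedness of $\mathcal M_e$.

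The main obstacle I anticipate is the first step: establishing connectedness of $\mathcal M_{irr,e}(J)$ for a single tamed $J$ and propagating it across deformations of $J$. The subtlety is that as $J_t$ varies, irreducible curves can degenerate into reducible configurations and vice versa, so one must argue carefully that the parametrized moduli space $\bigcup_t \big(\mathcal M_{irr,e}(J_t)\cup \mathcal M_{red,e}(J_t)\big)$ over a generic path is a reasonable (e.g. stratified) space whose top stratum is connected, using the dimension bound \eqref{red-dim} to ensure the reducible locus has real codimension at least $2$ in the parametrized family as well. One also needs a base case: on the ``model'' side (rational surface with a curve realizing the class, compare Theorem \ref{cpl}) the irreducible part of the linear system $|L|$ is Zariski open in $\mathbb{CP}^{l}$, hence connected, which anchors the induction. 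The second step (reducible-to-irreducible via moving points) is comparatively routine given Lemma \ref{uniquereducible} and \eqref{red-dim}, though care is needed to choose the intermediate point configurations so that the relevant $\mathcal M_e^\Omega$ stays zero-dimensional and nonempty along the entire path.
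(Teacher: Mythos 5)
The main step of your proposal does not go through. You try to establish connectedness of $\mathcal M_{irr,e}(J)$ by deforming $J$ to a generic $J_0$ and running a parametrized/cobordism argument over a path $\{J_t\}$, using the codimension-two bound on the reducible locus. But connectedness is a property of the \emph{fiber} at the given (arbitrary, possibly very non-generic) $J$, and it is not inherited from any structure of the parametrized space $\bigcup_t \mathcal M_{irr,e}(J_t)$: even if that total space is a manifold, is connected, and meets the reducible strata only in codimension $\ge 2$, individual fibers can perfectly well be disconnected (components can merge or be born as $t$ varies, and the projection to $[0,1]$ is not proper on the irreducible part since curves can degenerate). So the asserted equivalence ``connected for one $J$ iff for all tamed $J$'' is exactly the hard statement, not a consequence of a dimension count; if such transport of connectedness were available, much of the symplectic isotopy literature would be unnecessary. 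Your proposed base case is also shaky: an integrable complex structure tamed by the given $\omega$ need not exist (cf.\ the discussion around Lemma \ref{deformomega} and \cite{CP}), so the ``anchor'' of the induction may be empty. The paper avoids deforming $J$ altogether: for the fixed $J$ it first deforms the two given smooth curves \emph{inside} $\mathcal M_{irr,e}(J)$ (using automatic transversality, Theorem \ref{unobstructed}) so that they meet transversally at a point $x$ and pass through a pretty generic $(l-2)$-tuple, and then invokes Proposition 4.9 of \cite{LZ-generic}, which says that the pencil $\mathcal M_e^{x,x_3,\dots,x_l}$ is homeomorphic to $S^2$ with only finitely many reducible members; both curves lie on this $S^2$, hence are joined inside $\mathcal M_{irr,e}$. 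A separate argument (a ``dual'' nef class $H_e$ with $H_e\cdot e=1$) handles $e\cdot e=0$, and the irrational ruled case is disposed of first via Theorem \ref{M_T}; your sketch addresses neither.

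Your second step (connecting a reducible $\Theta_0$ to the irreducible part by prescribing points on $|\Theta_0|$ and moving them off the codimension-two locus $Z_{red}$) is essentially the paper's argument and is sound in outline, though the uniqueness you need is Lemma \ref{uniquereducible}(2), not (1), and one must prescribe the points \emph{with weights} matching the multiplicities $m_i$ (exactly $e\cdot e_{C_i}$ points of weight $m_i$ on each component, plus one extra point), together with a careful choice of neighborhoods, to ensure that every configuration along the path determines a unique subvariety and that the resulting family is continuous. But since this step presupposes the connectedness of $\mathcal M_{irr,e}$, the gap in your first step leaves the proposal incomplete.
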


\begin{proof}
We divide our argument into five parts.

{\bf Part 1: Reduce to rational surfaces.}

If $\mathcal M_e\ne \emptyset$, since $e$ is $J$-nef, we know the self-intersection $e^2\ge 0$. By a classical result of McDuff, if furthermore $g_J(e)=0$ and $\mathcal M_{irr, e}\ne \emptyset$, then $M$ has to be a rational or ruled surface. When $M$ is not rational, the results follow from Theorem \ref{M_T} and Corollary \ref{spherefiber}. Hence, in the following, we assume $M$ is a rational surface. 

{\bf Part 2: Definition of pretty generic tuples.}

We first assume $e$ is a big $J$-nef class, {\it i.e.} a $J$-nef class with $e\cdot e>0$. For the proof we need the following definition of \cite{LZ-generic}. We denote $l=l_e\ge 2$. Let $M^{[k]}$ be the set of $k$ tuples of pairwise distinct points in $M$.

\begin{definition}\label{pretty generic}
Fix a point $x\in M(e)$ (see Section \ref{secnef} for the definition). An element  $\Omega\in M^{[l-2]}$ is called {\it pretty generic} with respect to $e$ and $x$ if
\begin{itemize}
\item $x$ is distinct from any entry of $\Omega$;
\end{itemize}
For each $\Theta=\{(C_1, m_1),\cdots, (C_n, m_n)\}\in \mathcal M^x_{red, e}$ with $x\in C_1$,
\begin{itemize} 

\item $x$ is not in $C_i$ for any $i\geq 2$; 

\item  $\Omega_i\cap \Omega_j=\emptyset$ for $i\ne j$, where $\Omega_i=\Omega\cap C_i$;

\item  
$1+w_1=m_1e\cdot e_1(\ge l_{e_1})$, and  $w_i=m_ie\cdot e_i(\ge l_{e_i})$ for $i\geq 2$. Here $w_i$ is  the cardinality of $\Omega_i$. 
\end{itemize}
Let $G_{e}^x$ be the set of pretty generic $l-2$ tuples with respect to $e$ and $x$. 
\end{definition}

It is indeed a generic set in the sense that the complement of $G_e^x$ has complex codimension at least one in $M^{[l-2]}$ by Proposition 4.8 of \cite{LZ-generic}. In particular, the set $G_e^x$ is path connected.

{\bf Part 3: $\mathcal M_{irr, e}$ is path connected when $e$ is a big $J$-nef class.}

Now, if $C$ and $C'$ are smooth rational curves in $\mathcal M_{irr, e}$, they intersect at $l-1$ points (counted with multiplicities). If one of the intersection points $\tilde x\in D$ where $D$ is an irreducible curve in $Z(e)$, we have $e_C\cdot e_D=0$ by definition. On the other hand, since $C$ is irreducible, we know the irreducible curve $D$ is not identical to $C$ since the class $e=e_C$ is big and thus $e \cdot e_C>0=e\cdot e_D$. Then $\tilde x\in C\cap D$ implies $e\cdot e_D>0$ which is a contradiction. 

Hence, all of these intersection points are in $M(e)$. First, we will show that we can deform the curve $C$ within $\mathcal M_{irr, e}$ to a smooth curve $\tilde{C}$ such that all the intersection points with $C'$ are of multiplicity one, if there are intersection points of $C$ and $C'$ having multiplicity greater than one. We know $\mathcal M_{irr, e}$ is a smooth manifold of dimension $2l$. Hence we can choose an open neighborhood $U$ of $C\in \mathcal M_{irr, e}$. We look at the intersection points between elements in $U$ and the curve $C'$. There are $l-1$ intersection points counted with multiplicities. Let $U'\subset U$ be a subset of $U$ such that an element in $U'$ is tangent to the curve $C'$, {\it i.e.} intersecting at least one point with multiplicity at least two. In particular, $C\in U'$. 

The following is a general fact of automatic transversality, see {\it e.g.} Remark 3.6 of \cite{LZ-generic}. If we have $k\le l$ distinct points $x_1, \cdots, x_k$ in $C'$ and $k'<k$ with $k+k'\le l$, then the set of smooth rational curves in class $e$ passing through $x_1, \cdots, x_k$ and having the same tangent space at the $k'$ points $x_1, \cdots, x_{k'}$ as $C'$ is still a smooth manifold, whose dimension is $2(l-k-k')$. Since we can vary $x_1, \cdots, x_k$ in the curve $C'$, and $k'\ge 1$, we know $U'$ is a submanifold of U with dimension $2(l-k-1)+2k=2l-2$. In particular, $U\setminus U'$, which is the set of curves in $U$ intersecting $C'$ at points with multiplicity one, is non-empty and path connected. Moreover, elements in $U\setminus U'$ could be connected by paths to the element $C\in U'$ within $U\setminus U'$, in the sense that for any $\tilde C \in U\setminus U'$ there is a path $P(t)\subset U$ such that $P(1)=C$, $P(0)=\tilde C$ and $P([0, 1))\subset U\setminus U'$. Hence, any curve $\tilde C\in \mathcal M_{irr, e}$ could be obtained by deforming the curve $C$ within $\mathcal M_{irr, e}$, such that all the intersection points of $\tilde C$ and $C'$ are of multiplicity one. For simplicity of notation, we will still write the deformed curve $\tilde C$ by $C$.

We can now choose one of the intersection points of $C$ and $C'$, and call it $x$. For the remaining $l-2$ points $x_3, \cdots, x_l$, they might not be in $G_e^x$. 
 Choose two more points $y\in C$ and $y'\in C'$ other than all these intersection points. We are able to choose $l-2$ disjoint open neighborhoods $N_i$ of $x_i$ in $M$ with $i=3, \cdots, l$, such that all curves representing $e$, passing through $x$, and $y$ or $y'$, and intersecting all $N_i$ are smooth rational curves. This is because $\mathcal M_{irr, e}$ is a smooth manifold by Theorem \ref{unobstructed} and there is a unique subvariety, smooth or not, passing through $l$ given points on an irreducible curve in class $e$.

Since the complement of $G_e^x$ has complex codimension at least one in $M^{[l-2]}$, we are able to choose a pretty generic $l-2$-tuple from $N_3\times \cdots\times N_l$. With these understood, we are able to deform $C$ and $C'$ within $\mathcal M_{irr, e}$ to two smooth rational curves intersecting at $x$ and an $l-2$-tuple in $G_e^x$. We still denote these two curves by $C$ and $C'$.

By Proposition 4.9 of \cite{LZ-generic}, the subset $\mathcal M_{e}^{x, x_3, \cdots, x_l}\subset \mathcal M_e$ is homeomorphic to $\mathbb CP^1=S^2$ when $(x_3, \cdots, x_l)\in G_e^x$. Moreover, $\mathcal M_{e}^{x, x_3, \cdots, x_l}\cap \mathcal M_{red, e}$ is a finite set of points. Since $C, C'\in \mathcal M_{e}^{x, x_3, \cdots, x_l}$, they are connected by a family of smooth rational curve in $\mathcal M_{e}^{x, x_3, \cdots, x_l}\cap \mathcal M_{irr, e}$. This finishes the proof that $\mathcal M_{irr, e}$ is path connected when $e$ is big $J$-nef.

{\bf Part 4: $\mathcal M_e$ is path connected when $e$ is a big $J$-nef class.}

To show $\mathcal M_e$ is path connected, we only need to prove that any element in $\mathcal M_{red, e}$ is connected by a path to an element in $\mathcal M_{irr, e}$. This would imply $\mathcal M_e$ is path connected since we have shown $\mathcal M_{irr, e}$ is path connected. Let $\Theta \in \mathcal M_{red, e}$, we choose $l'=\sum e\cdot e_{C_i}$ distinct points $x_1, \cdots, x_{l'}$ from the smooth part of $\Theta=\{(C_i, m_i)\}$. We choose the $l'$ points such that there are exactly $e\cdot e_{C_i}$ points on $C_i$ for each $i$, each with type $(x_i, m_{i})$ in the sense of section \ref{intersection}. Counted with weights, there are $\sum m_ie\cdot e_{C_i}=l-1$ points. We then choose another point, labeled by $x_l$, from the smooth part of $\Theta$ and different from $x_1, \cdots, x_{l'}$. By Lemma \ref{uniquereducible}, there is a unique element in $\mathcal M_e$ passing through points $x_1, \cdots, x_{l'}$ with multiplicities and another point $x_l$.

We take $l$ disjoint open sets $N_1, \cdots, N_l\subset M$ as following. For each $x_i, i\le l'$, assume it is on the irreducible component $(C_j, m_j)$. We choose $m_j$ disjoint open sets, say $N'_1, \cdots, N'_{m_j}$, such that $\overline{N'_{1}}\cup \cdots \cup \overline{N'_{m_j}}$ is a neighborhood of $x_i$ in $M$ and $x_i\in  \overline{N'_{k}}$ for each $1\le k\le m_i$. Considering all the points $x_1, \cdots, x_{l'}$, there are $l-1=\sum m_ie\cdot e_{C_i}$ such open sets. We relabel them by  $N_1, \cdots, N_{l-1}$.
Finally, we take a neighborhood $N_l$ of $x_l$ in $M$. Apparently, we can choose these open sets such that they are disjoint from each others.

We denote by $\mathcal M_{irr, e, k}$ (resp. $\mathcal M_{red, e, k}$) the subset of $\mathcal M_{irr, e}\times M^{[k]}$ (resp. $\mathcal M_{red, e}\times M^{[k]}$) that consists of elements of the form $(C, x_1, \cdots, x_k)$ with $x_i\in C$ and distinct. There are natural projections $\pi_{irr, l}: \mathcal M_{irr, e, l}\rightarrow M^{[l]}$ and $\pi_{red, l}: \mathcal M_{red, e, l}\rightarrow M^{[l]}$. First, we notice that the diagonal elements $Z_{diag}=M^l\setminus M^{[l]}$ is a finite union of submanifolds of dimension at least two. Proposition 4.5 in \cite{LZ-generic} shows that the image of $\pi_{red, l}$, say $Z_{red}\subset M^{[l]}$, is a finite union of submanifolds of codimension at least two, and $\pi_{irr, e}$ maps onto its complement. Moreover, the map $\pi_{irr, l}$ is one-to-one. Hence, $M^l\setminus (Z_{diag}\cup Z_{red})$ is path connected. In particular, we can choose a path $P(t)$ in $M^l$ such that $P(0)\in M^l$ is the $l$ points with weight $(x_1, m_{k_1}), \cdots, (x_{l'}, m_{k_{l'}})$ and $x_l$ that determine $\Theta$ uniquely and $P((0, 1])\subset N_1\times \cdots \times N_l\setminus Z_{red}$. Since all the $l$ tuples $P(t)$ determines the subvariety uniquely, the path $P(t)\subset M^l$ gives rise to a path connecting $\Theta$ to $\mathcal M_{irr, e}$.

{\bf Part 5: $\mathcal M_e$ is homeomorphic to $S^2$ when $e\cdot e=0$.}

When $e\cdot e=0$, we no longer need the technicalities of pretty generic tuples. In fact, the argument here is similar to that of Theorem \ref{M_T}. Instead of finding a smooth section as in Proposition \ref{smoothsection}, we will use a general construction in \cite{LZ-generic} of a ``dual" $J$-nef class. This will be used as our model for moduli space.

  By Theorem \ref{unobstructed}, $\mathcal M_{irr, e}$ is a manifold of complex dimension $1$ and $\mathcal M_{red, e}$ is a union of finitely many points. We will show that $\mathcal M_e=\mathcal M_{irr, e}\cup \mathcal M_{red, e}$ is actually homeomorphic to $S^2$. By Proposition 4.6 of \cite{LZ-generic}, there is another $J$-nef class $H_e$ with $g_J(H_e)=0$ such that $H_e\cdot e=1$. We choose a smooth rational curve $S$ representing $H_e$. Given any $z\in S$, there is a unique (although possibly reducible) rational curve $C_z$ in class $e$ passing through $z$. Thus we obtain a map $h: z\mapsto C_z$ from $S$ to $\mathcal M_e$. 

The map $h$ is  surjective since $H_e\cdot e\ne 0$. Since $S$ is also $J$-holomorphic and $H_e\cdot e=1$
any curve in $\mathcal M_e$ intersects with $S$ at a unique point by the positivity of intersection. 
Therefore $h$  is also one-to-one.  

Now let us show that $h$ is a homeomorphism, namely both $h$ and $h^{-1}$ are continuous. 
Since $S=S^2$ is Hausdorff  and $\mathcal M_e$ is compact, 
if we can  show that $h^{-1}:\mathcal M_e\to S$ is continuous, it follows that $h$ is also continuous.
 To show $h^{-1}$  is continuous, consider a sequence $C_i \in \mathcal M_e$ approaching to its Gromov-Hausdorff limit $C$. Let the intersection of $C_i$ (resp. $C$) with $S$ be $p_i$ (resp. $p$). Then $p_i$ has to approach $p$ by the first item of the definition of topology on $\mathcal M_e$.
 Therefore
$h$ is a homeomorphism.  
\end{proof}

\subsection{$\mathcal M_e=\mathbb CP^l$ when $e$ is primitive} 
In the argument of Proposition \ref{connected}, we have shown $\mathcal M_e=\mathbb CP^1$ when  $e\cdot e=0$.
We will next generalize it to show that $\mathcal M_{e}$ is homeomorphic to $\mathbb CP^l$ when $e$ is primitive, hence confirm Question 5.25 of \cite{LZ-generic} in the topological sense in this circumstance. This is Theorem \ref{cpl} and we state it again below as Theorem \ref{homeoCPl}.

We first need a lemma to adapt the discussion of section 4.3 in \cite{LZ-generic}. This lemma is crucial in our construction of the model for the moduli space. 

\begin{lemma}\label{H_enef}
Let $M=S^2\times S^2$ or $\mathbb CP^2\#k\overline{\mathbb CP^2}$ with $k\ge 1$. Suppose  $e\in H^2(M, \mathbb Z)$ is a primitive (i.e. $e$ is not divisible by an integer $k>1$) $J$-nef class with $g_J(e)=0$. Then there is a $J$-nef class $H_e$ such that $g_J(H_e)=0$ and $H_e\cdot e=1$. Moreover, $H_e$ can be assumed to be not  proportional to $e$.
\end{lemma}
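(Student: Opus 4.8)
\textbf{Proof proposal for Lemma \ref{H_enef}.}

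The plan is to use the classification of $J$-nef spherical classes together with the structure of the cohomology lattice of a rational surface. First I would reduce to a convenient normal form for $e$. Since $e$ is a primitive $J$-nef class with $g_J(e)=0$ and $\mathcal{M}_e\neq\emptyset$ (it is $J$-effective because $SW(e)\neq 0$ or by the light-cone considerations in Section 3), we have $e^2\geq 0$. By the work of Li--Zhang on $J$-nef classes (together with the fact that any spherical class of non-negative square can be brought into standard form by a diffeomorphism realized by the $J$-holomorphic geometry, cf.\ the Cremona/reflection arguments used in \cite{LZ-generic}), we may assume $e$ is, up to the symmetry group acting on $H^2(M,\mathbb{Z})$, one of the standard classes: on $S^2\times S^2$ it is $aT+bU$ with $0\le a\le b$, $\{a,b\}$ coprime (primitivity), and on $\mathbb{CP}^2\#k\overline{\mathbb{CP}^2}$ it is $dH-\sum m_iE_i$ with $d\ge m_1\ge\cdots\ge m_k\ge 0$ and the standard Cremona-reducedness inequalities. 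Primitivity of $e$ passes through these reductions.

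Next, for each normal form I would exhibit the dual class $H_e$ explicitly. For instance on $S^2\times S^2$, if $e=aT+bU$ with $\gcd(a,b)=1$, choose integers $p,q$ with $pb-qa=1$ and set $H_e=pT+qU$ (adjusting signs and adding a large multiple of $T+U$ if needed to make it nef); this has $H_e\cdot e=1$ and one checks $g_J(H_e)=\frac12(H_e^2+K\cdot H_e)+1$; since $K=-2T-2U$, one arranges $g_J(H_e)=0$ by the choice within the coset $(p,q)+\mathbb{Z}(a,b)$. On $\mathbb{CP}^2\#k\overline{\mathbb{CP}^2}$ the standard candidates are classes like $H-E_i$, $H$, $E_i$, or $L-E_i$, whose pairings with the reduced form of $e$ are computed directly; primitivity of $e$ guarantees that at least one of these (or a Cremona transform thereof) pairs to $1$. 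In every case I then check $J$-nefness of the resulting $H_e$ via Corollary \ref{+sph}-type statements and the genus bound, or simply note that $H_e$ is itself a standard spherical class of non-negative square and hence, after possibly adding a nef class of square zero, is $J$-nef by Theorem \ref{unobstructed} and the positivity-of-intersection arguments already in play. Finally, non-proportionality of $H_e$ to $e$ is free once $H_e^2$ and $e^2$ are incompatible with proportionality, or can be forced by adding to $H_e$ a nef square-zero class transverse to $e$ (which does not change $H_e\cdot e=1$ and keeps $g_J=0$ by a direct genus computation), \emph{except} in the degenerate situation $e^2=H_e^2=0$ and $e,H_e$ both fibral --- but then $H_e\cdot e=1$ already forces them non-proportional.

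The main obstacle I anticipate is organizing the case analysis uniformly: there are genuinely several shapes of primitive $J$-nef spherical classes (the two rulings of $S^2\times S^2$, the hyperplane and exceptional-type classes on blow-ups, and their Cremona orbits), and for each one must simultaneously (i) produce $H_e$ with $H_e\cdot e=1$, which is a Bézout-type statement requiring primitivity, (ii) verify $g_J(H_e)=0$, which pins down $H_e$ inside a coset and uses $K\cdot H_e$, and (iii) verify $H_e$ is genuinely $J$-nef for the \emph{given} tamed $J$, not merely a generic one --- this last point is where I expect to lean hardest on the results of \cite{LZ-generic, LZrc} and on Lemma \ref{uniquereducible}, since nefness is not stable under arbitrary deformation of $J$. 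I would handle (iii) by showing any hypothetical irreducible curve $C$ with $H_e\cdot[C]<0$ would, by pairing with $e$ and using $H_e\cdot e=1$ together with the light-cone lemma, force a contradiction with $e$ being $J$-nef, much as in the proof of Corollary \ref{spherefiber} and Lemma \ref{nonnefS2}.
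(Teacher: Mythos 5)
Your overall outline (classify $e$ up to Cremona equivalence, write down an explicit dual class with $H_e\cdot e=1$ via a B\'ezout-type argument, then check $g_J(H_e)=0$ and $J$-nefness) matches the general shape of the paper's argument, which takes $H_e$ from Lemma 4.13 of \cite{LZ-generic} in most cases. But your step (iii) -- verifying $J$-nefness of the chosen $H_e$ for the \emph{given} tamed $J$ -- has a genuine gap, and it is exactly the point the paper's proof is devoted to. Your proposed mechanism is: if an irreducible curve $C$ had $H_e\cdot[C]<0$, then pairing with $e$ and the light-cone lemma would contradict the nefness of $e$. This fails. Take the model case $e=H$ on $\mathbb CP^2\#k\overline{\mathbb CP^2}$ and the candidate $H_e=H-E_1$: an irreducible curve in class $E_2-E_1$ (which exists for suitable tamed, even integrable, $J$, e.g.\ after blowing up infinitely near points) satisfies $(H-E_1)\cdot(E_2-E_1)=-1<0$ while $(E_2-E_1)\cdot H=0$, so its existence is perfectly compatible with $e=H$ being $J$-nef and yields no contradiction; the light-cone lemma is silent because the obstructing class has negative square and pairs zero with $e$. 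So no single explicit candidate can be shown nef this way, and indeed no single candidate \emph{is} nef for all tamed $J$. (Your fallback remark that $H_e$, being a standard spherical class, is $J$-nef ``by Theorem \ref{unobstructed}'' is circular: that theorem assumes nefness, and Section 6 of the paper shows spherical classes routinely fail to be nef for special $J$.)

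What the actual proof does at this point, and what your proposal is missing, is an argument that \emph{some member of a finite family} of candidates is nef, rather than a verification for one fixed candidate. For $e$ Cremona equivalent to $H$ one considers $H-E_1,\dots,H-E_k$: if $H-E_{i}$ fails to be nef, adjunction plus the nefness of $e$ pins the obstructing curve class down to the form $E_{j}-E_{i}-\sum b_mE_m$ with the remaining $b_m\in\{0,1\}$; iterating over the candidates produces a chain of such curve classes whose sum, together with the classes $E_i$ (effective because $SW(E_i)\neq 0$), would exhibit $0$ as a nontrivial non-negative combination of curve classes, contradicting tameness of $J$. A parallel argument, using effectivity of $H-E_1-E_2$ and $E_i$, handles $e$ Cremona equivalent to $2H-E_1-E_2$; all remaining Cremona classes are covered by the choices already made in Lemma 4.13 of \cite{LZ-generic}. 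This ``at least one candidate survives, else tameness is violated'' step is the essential content of the lemma and does not follow from Lemma \ref{uniquereducible}, Corollary \ref{+sph} (which concerns irrational ruled surfaces), or Lemma \ref{nonnefS2} (which cannot exclude negative rational curves, the very classes causing the trouble). There are also smaller slips -- on $S^2\times S^2$ the correct condition is $pb+qa=1$ for the intersection pairing and the genus constraint $g=(p-1)(q-1)=0$ must be checked to be attainable in the relevant coset, and adding auxiliary square-zero classes to force non-proportionality generally destroys either $H_e\cdot e=1$ or $g_J(H_e)=0$ -- but the decisive missing idea is the finite-family/tameness argument above.
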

\begin{proof}

We take the class $H_e$ to be the same ones chosen in the proof of Lemma 4.13 of \cite{LZ-generic} except if $e$ is Cremona equivalent to $H$ or $2H-E_1-E_2$ when $M=\mathbb CP^2\#k\overline{\mathbb CP^2}$. 

When $e$ is equivalent to $H$, without loss of generality, we assume $e=H$. We will show that at least one of $H-E_1, \cdots, H-E_k$ is $J$-nef. Let us first take $H_e'=H-E_1$, and assume there is a curve pairing negatively with it. By Lemma 4.15 of \cite{LZ-generic}, we know an irreducible curve class $e_C=aH-b_1E_1-\cdots -b_kE_k$, pairing negatively with $H-E_1$, must have $a\le 0$. Hence $a=0$, otherwise it contradicts to the assumption that $e=H$ is $J$-nef. But when $a=0$, we have $b_1=-H_e'\cdot e_C>0$. Moreover, since $SW(E_i)\ne 0$, we know there are $J$-holomorphic subvarieties in classes $E_i$. At least one $b_i<0$, otherwise $0$ is a linear combination of $e_C$ and $e_{E_i}$ which contradicts to the fact that $J$ is tamed. Then we look at the adjunction number $$e_C\cdot e_C+K_J\cdot e_C=-b_1^2-\cdots-b_k^2+b_1+\cdots +b_k\le 0.$$ To make sure the adjunction number is no less than $-2$, we will exactly have one negative $b_i$, say $b_2=-1$. Other $b_i$'s are $0$ or $1$. In particular, $b_1=1$.

Then we take the class $H-E_2$. If it is not $J$-nef, we can argue as in the last paragraph for $H-E_1$ to show that there is a curve class $e_{C_2}=-b_1^{(2)}E_1-\cdots -b_k^{(2)}E_k$ with only one negative coefficient which is $-1$, and others are $0$ or $1$. If the negative coefficient is some $b_i^{(2)}=-1$ such that $b_i=1$, then $e_C+e_{C_2}$ is a linear combination of $E_1, \cdots, E_k$ with non-positive coefficients. This contradicts to the fact that $J$ is tamed. If the negative coefficient is some $b_i^{(2)}$ ($i\ne 2$ since $b_2^{(2)}=1$) with  $b_i=0$, say $b_3^{(2)}=-1$, we could continue our argument with the class $H-E_3$. Since $k$ is a finite number, this process will stop at some finite number $j$, such that when we argue it with a non $J$-nef class $H-E_{j}$, we will get a curve class $e_{C_{j}}$ with one negative $b_i^{(j)}$ and $i<j$. Then the sum $e_{C_i}+\cdots +e_{C_j}$ is a linear combination of $E_i$ with non-positive coefficients, which contradicts to the tameness of $J$ again. Hence, we have shown that at least one of $H-E_1, \cdots, H-E_k$ is $J$-nef. We choose it as $H_e$, which is a class satisfying the requirements of our lemma.

When $e$ is equivalent to $2H-E_1-E_2$, say $e=2H-E_1-E_2$, we claim that one of the classes, $H-E_1$ or $H-E_2$, is $J$-nef. We assume both $H-E_1$ and $H-E_2$ are not $J$-nef. By Lemma 4.15 of \cite{LZ-generic}, we know an irreducible curve class $e_C=aH-b_1E_1-\cdots -b_kE_k$ pairing negatively with $H-E_1$ or $H-E_2$ must have $a\le 0$. We are able to determine all the possible classes that pair negatively with $H-E_1$. In this case, $(H-E_1)\cdot e_C\le -1$ implies $a\le b_1-1$. Since $2H-E_1-E_2$ is $J$-nef, we know $H-E_2$ pairs positively with $e_C$, which implies $b_2\le a-1\le -1$. We calculate the $K_J$-adjunction number
$$e_C\cdot e_C+K_J\cdot e_C\le a^2-b_2^2-3a+b_2\le 2a-1-3a+b_2\le -2.$$

The equality holds only when $b_2=a-1$ (the second inequality) and all other $b_i$ are $0$ or $1$ (the first inequality). Furthermore $b_2=a-1$ would imply $a=b_1-1$ also holds. Hence the only possible classes are $E_2-E_1-b_3E_3-\cdots -b_kE_k$ and $-H+2E_2-b_3E_3-\cdots -b_kE_k$ with all $b_3, \cdots b_k$ are $0$ or $1$. 

Similarly, if the class $H-E_2$ is not $J$-nef, then there is a curve class $E_1-E_2-b_3E_3-\cdots -b_kE_k$ or $-H+2E_1-b_3E_3-\cdots -b_kE_k$ with all $b_3, \cdots b_k$ are $0$ or $1$. 

We notice the classes $E_2-E_1-b_3E_3-\cdots -b_kE_k$ and $E_1-E_2-b'_3E_3-\cdots -b'_kE_k$ cannot coexist. We assume there is no curve class of type $E_2-E_1-b_3E_3-\cdots -b_kE_k$. Then there is a curve in class $-H+2E_2-b_3E_3-\cdots -b_kE_k$. At the same time, there is a curve in class  $E_1-E_2-b'_3E_3-\cdots -b'_kE_k$ or $-H+2E_1-b'_3E_3-\cdots -b'_kE_k$. In particular, it implies that there are $J$-holomorphic subvarieties in classes $-H+2E_2-b_3E_3-\cdots -b_kE_k$ and $-H+2E_1-b'_3E_3-\cdots -b'_kE_k$. Again, $SW(E_i)\ne 0$ implies that there are $J$-holomorphic subvarieties in classes $E_i$. In turn, it would imply that there are subvarieties in classes $-H+2E_1$ and $-H+2E_2$. Finally $SW(H-E_1-E_2)\ne 0$, hence $H-E_1-E_2$ is the class of a subvariety. However, then we know $0=(-H+2E_1)+(-H+2E_2)+2(H-E_1-E_2)$ is the class of a subvariety, which contradicts to the tameness of $J$.

Hence, $H-E_1$ or $H-E_2$ has to be $J$-nef. It is our $H_e$ when $e=2H-E_1-E_2$. It satisfies all the requirements. This finishes the proof of the lemma. 
\end{proof}

\begin{theorem}\label{homeoCPl}
Suppose $J$ is a tamed almost complex structure on a rational surface $M$, and $e$ is a primitive class which is represented by a smooth $J$-holomorphic sphere. Then $\mathcal M_e$ is homeomorphic to $\mathbb CP^l$ where $l=\max\{0, e\cdot e+1\}$.
\end{theorem}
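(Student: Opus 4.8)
The plan is to realize $\mathbb{CP}^l$ as $\mathrm{Sym}^l S^2$ and set up a homeomorphism $\Phi\colon \mathcal{M}_e \to \mathrm{Sym}^l S^2$ by intersecting each subvariety in class $e$ with a fixed ``dual'' curve. When $e\cdot e \le 0$ we have $l = 0$ (if $e\cdot e < 0$, $\mathcal{M}_e$ is a single point by $J$-nefness; if $e\cdot e = 0$ this is already done in Part 5 of Proposition \ref{connected}), so assume $e\cdot e > 0$ and $l = e\cdot e + 1 \ge 2$. First I would invoke Lemma \ref{H_enef} to produce a $J$-nef class $H_e$ with $g_J(H_e) = 0$, $H_e\cdot e = 1$, and $H_e$ not proportional to $e$. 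Iterating, or rather arguing directly as in section 4.3 of \cite{LZ-generic}, I want a $J$-nef spherical class $e'$ with $e\cdot e' = l$; the natural candidate is a class built from $H_e$ (roughly $e' = H_e + $ correction, or the class dual to $e$ in the sense of \cite{LZ-generic}), and I would verify $g_J(e') = 0$, $e\cdot e' = l$, and that $e'$ is $J$-nef. Fix a smooth $J$-holomorphic rational curve $S$ representing $e'$; by Theorem \ref{unobstructed}, $\mathcal{M}_{irr,e'}$ is a smooth manifold, and such an $S$ exists.

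Next I would define the map. Given $\Theta \in \mathcal{M}_e$, since $S$ is irreducible and $e\cdot e' = l > 0$, positivity of intersections forces $\Theta$ and $S$ to meet in exactly $l$ points counted with multiplicity, and $S$ is not a component of $\Theta$ (otherwise $e'$ would pair negatively with something, contradicting $J$-nefness, or $e\cdot e' \ge$ something too large). So $\Theta \mapsto \Theta \cap S \in \mathrm{Sym}^l S$ is well defined; call it $\Phi$. The content is that $\Phi$ is a bijection, and then that it and its inverse are continuous. Surjectivity: given a divisor $D = \sum d_i z_i$ on $S$ of degree $l$, I need a subvariety in class $e$ meeting $S$ at $z_i$ with contact order $\ge d_i$ — this is a ``points with weight of the second type'' incidence condition in the sense of section \ref{intersection}, and since the total weight is $l = e\cdot e + 1 = l_e + $ appropriate bookkeeping, one produces such a $\Theta$ either by a dimension count on $\mathcal{M}_{irr,e}$ (which has real dimension $2l_e = 2(l-1)$, and imposing a codimension-$(2l_e)$ tangency condition... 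I need to be careful: the right count is that requiring passage through $l_e$ generic points, or an equivalent weighted tangency package of total weight $l_e$, cuts $\mathcal{M}_{irr,e}$ down to a point, and the extra point $z_j$ with its last unit of weight is the Lemma \ref{uniquereducible}-type uniqueness). The cleanest route: show a subvariety in $e$ with prescribed weighted-tangency data along $S$ of total weight $l_e$ exists and is unique, using automatic transversality (Theorem \ref{unobstructed}) for existence on the irreducible stratum plus a limiting/Gromov-compactness argument to handle degenerations into $\mathcal{M}_{red,e}$, and using the second-type-intersection analogue of Lemma \ref{uniquereducible} (promised in the text just after its proof, and to be proved by the same argument) for uniqueness.

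For injectivity, suppose $\Theta, \Theta' \in \mathcal{M}_e$ have the same intersection divisor with $S$. Then they pass through the same points with the same weights (of the second type, matching curve $S$), and the total weight is $l = e\cdot e + 1 > e\cdot e = e_\Theta \cdot e_{\Theta'}$. If $\Theta \ne \Theta'$ and they share no component, positivity of local intersection is already contradicted; if they share a component, run exactly the discarding argument of Lemma \ref{uniquereducible} — pass to $\Theta_0, \Theta_0'$ with no common component, note $e\cdot e_{\Theta_0} \ge e_{\Theta_0}\cdot e_{\Theta_0'}$ with strict inequality because of the shared component and connectedness (Theorem \ref{spheresphere}), and derive that $\Theta_0$ and $\Theta_0'$ meet in more points (counted with the second-type weights inherited from the tangency data along $S$) than $e_{\Theta_0}^2$, a contradiction. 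This is where the ``refined version of Lemma \ref{uniquereducible}'' does its work. Finally, continuity: $\Phi$ is continuous because Gromov–Hausdorff convergence $\Theta_k \to \Theta$ together with convergence of the pairing $(\nu,\Theta_k)\to(\nu,\Theta)$ forces the intersection cycles with $S$ to converge in $\mathrm{Sym}^l S$ (a cycle whose support GH-converges and whose integrals against forms converge must converge as a $0$-cycle); and $\Phi^{-1}$ is continuous by the usual compactness argument — $\mathcal{M}_e$ is compact (Gromov compactness), $\mathrm{Sym}^l S^2 = \mathbb{CP}^l$ is Hausdorff, so a continuous bijection from compact to Hausdorff is a homeomorphism, hence $\Phi$ continuous and bijective with $\mathcal{M}_e$ compact already gives the result once we know $\Phi^{-1}$ is a well-defined map, i.e. once surjectivity and injectivity are in hand.

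The main obstacle I expect is the surjectivity/well-definedness of $\Phi^{-1}$: constructing, for an arbitrary effective degree-$l$ divisor on $S$ (including non-reduced ones, and ones whose points lie on the ``bad'' loci $Z(e')$ or on reducible members), a subvariety in class $e$ realizing exactly that tangency data. Existence on the open stratum is automatic transversality, but controlling what happens as points collide or move onto reducible configurations — ensuring the limit still has the prescribed contact order rather than absorbing extra intersection or breaking into the wrong homology pieces — is delicate, and is precisely the point where the refined second-type intersection lemma and a careful Gromov-compactness bookkeeping are needed. The proportionality clause in Lemma \ref{H_enef} ($H_e$ not proportional to $e$) is what lets us run this with $S\ne$ any member of $\mathcal{M}_e$, so the intersection with $S$ genuinely records $l$ independent parameters.
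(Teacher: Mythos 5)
Your plan is essentially the paper's proof: dispose of $e\cdot e<0$ and $e\cdot e=0$ as you do, and for $e\cdot e>0$ use Lemma \ref{H_enef} to build a dual $J$-nef class $e'$ with $g_J(e')=0$ and $e\cdot e'=l$, fix a smooth rational curve $S$ in class $e'$, identify $\mathcal M_e$ with $\hbox{Sym}^l S\cong \mathbb CP^l$ by intersecting with $S$, prove uniqueness through weighted (second-type) points by rerunning the discarding argument of Lemma \ref{uniquereducible}, and finish with the compact-to-Hausdorff continuity argument. The one step where your justification does not go through as written is the production of $S$: you leave $e'$ as ``$H_e$ plus a correction'' and cite Theorem \ref{unobstructed} for the existence of a smooth representative, but that theorem requires $e'$ to be a \emph{spherical} class, and neither sphericality nor even $J$-effectiveness of $e'$ is known at that point. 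The paper takes $e'=e+H_e$ explicitly (adjunction gives $g_J(e')=0$, nefness is inherited from $e$ and $H_e$, and $e'\cdot e=e^2+1=l$) and then obtains effectiveness from Seiberg--Witten theory: $e'^2>0$ gives $\dim_{SW}(e')>0$, $SW(K-e')\ne 0$ would contradict nefness since $e\cdot(K-e')=-\dim_{SW}(e')<0$, so wall-crossing gives $SW(e')\ne 0$, and Proposition 4.5 of \cite{LZ-generic} yields the smooth rational curve $S$. Some such input is genuinely needed, so you should add it. (A small slip: in your $e\cdot e<0$ aside, uniqueness follows from positivity of local intersections, not from ``$J$-nefness''---an effective class of negative square is never $J$-nef.)

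On the point you flag as the main obstacle---realizing an arbitrary, possibly non-reduced, degree-$l$ divisor on $S$ by a member of $\mathcal M_e$---you are right that this is the delicate half of the bijection; the paper states the existence-and-uniqueness claim but writes out only the uniqueness argument, and your proposed supplement (existence through $l$ generic points from $SW(e)\ne 0$ or unobstructedness, then Gromov compactness with upper semicontinuity of contact order along $S$, using that $S$ can never be a component of a limit because $e-e'=-H_e$ is not effective) is the correct way to fill it in, not a departure from the paper's route.
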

\begin{proof}
When $e\cdot e<0$, we have $l=0$.  It follows from positivity of local intersections that the smooth $J$-holomorphic sphere representing $e$ is the unique element in $\mathcal M_e$.

When $e\cdot e\ge 0$, we know $e$ is $J$-nef. 
We could assume $e\cdot e>0$ otherwise it is verified in Proposition \ref{connected}. For $M=\mathbb CP^2$, it is well known $\mathcal M_H=\mathbb CP^2$, see {\it e.g.} \cite{Gr}. Actually, $\mathcal M_{2H}=\mathbb CP^5$ by the result of \cite{Sik}. Hence, let us discuss $\mathbb CP^2\#k\overline{\mathbb CP^2}$
or $S^2\times S^2$. Our first goal is to find a class $e'$ such that it is $J$-nef, $g_J(e')=0$ and $e'\cdot e=l$. 

In Lemma \ref{H_enef}, we have found $J$-nef class $H_e$  such that $g_J(H_e)=0$ and $H_e\cdot e=1$ when $e$ is primitive. Moreover, we could choose $H_e$ not proportional to $e$. 

Let $e'=e+H_e$. By adjunction formula $$(e+H_e)^2+K\cdot (e+H_e)=(-2)+(-2)+2=-2,$$
 $g_J(e')=0$ and $e'$ is $J$-nef since both $e$ and $H_e$ are so. Moreover, the intersection number $e'\cdot e= e^2+1=l$. Since $e'^2>0$, we have $\dim_{SW}(e')>0$. If $SW(K-e')\ne 0$, it will contradict to the nefness of $e'$ by $e\cdot (K-e')=-\dim_{SW}(e')<0$. Hence by Seiberg-Witten wall-crossing, we have $SW(e')=1$. By Proposition 4.5 of \cite{LZ-generic}, we choose a smooth rational curve $S$ in class $e'$. Notice by our choice of class $e'$, the smooth rational curve $S$ cannot be an irreducible component of any element in $\mathcal M_e$. 
Since $J$ is tamed, any subvariety in $\mathcal M_e$ is connected. Take points $x_1, \cdots, x_l\in S$. Some of the points $x_i$ might be identical. Since the curve $S$ is given {\it a priori}, when we talk about the intersection of subvarieties as in Section \ref{intersection}, we could also include the second type where the ``matching" curve at $x_i$ is given by $S$.

We will show that there is a unique (possibly reducible) rational curve in class $e$ passing through all $x_i$. The argument is similar to that of Lemma \ref{uniquereducible}, with slight modifications with regard to the existence of the curve $S$ and the corresponding second type intersections. We assume there are two such subvarieties, say $\Theta=\{(C_i, m_i)\}, \Theta'=\{(C_i', m_i')\}$.
If $\Theta, \Theta'$ have no common components, then the result follows from positivity of local intersection since $e_{\Theta}\cdot e_{\Theta'}<l$. 

Hence we assume they have at least one common components. 
In particular, none of $\Theta$ and $\Theta'$ is a smooth variety. 

We rewrite two subvarieties $\Theta, \Theta' \in \mathcal M_e$, allowing $m_i=0$ in the notation, such that they have the same set of irreducible components formally, i.e. $\Theta=\{(C_i, m_i)\}$ and $\Theta'=\{(C_i, m'_i)\}$. Then for each $C_i$, if $m_i\le m'_i$, we change the components to $(C_i, 0)$ and $(C_i, m'_i-m_i)$. Similar procedure applies to the case when $m_i>m_i'$. Apply this process to all $i$ and discard finally all components with multiplicity $0$ and denote them by $\Theta_0,\Theta'_0$ and still use $(C_i, m_i)$ and $(C_i, m'_i)$ to denote their components. Notice they are homologous, formally have homology class $$e-\sum_{m_{k_i}<m_{k_i}'} m_{k_i}e_{C_{k_i}}-\sum_{m_{l_j}'<m_{l_j}} m'_{l_j}e_{C_{l_j}}-\sum_{m_{q_p}'=m_{q_p}} m'_{q_p}e_{C_{q_p}}.$$

There are two ways to express the class, by taking $e=e_{\Theta}$ or $e=e_{\Theta'}$ in the above formula. Namely, it is $$\sum_{m_{k_i}<m_{k_i}'}(m_{k_i}'-m_{k_i})e_{C_{k_i}}+\hbox{others}=e_{\Theta_0'}=e_{\Theta_0}=\sum_{m_{l_j}'<m_{l_j}} (m_{l_j}-m'_{l_j})e_{C_{l_j}}+\hbox{others}.$$
Here the term ``others" means the terms $m_ie_{C_i}$ or $m_i'e_{C_i}$ where $i$ is not taken from $k_i$, $l_j$ or $q_p$. 

Now $\Theta_0$ and $\Theta_0'$ have no common components. They intersect the rational curve $S$ at least $e'\cdot e_{\Theta_0}\ge e\cdot e_{\Theta_0}$ points (as a subset of $\{x_1, \cdots, x_l\}$) with multiplicities. In the inequality above, we make use of the fact that $H_e$ is $J$-nef. Hence $\Theta_0$ and $\Theta_0'$ would intersect at least $e\cdot e_{\Theta_0}$ points with multiplicities.

We notice that $e\cdot e_{\Theta_0}\ge e_{\Theta_0}\cdot e_{\Theta_0}$. In fact, the difference $e-e_{\Theta_0}=e-e_{\Theta_0'}$ has $3$ types of terms, any of them pairing non-negatively with $e_{\Theta_0}=e_{\Theta_0'}$. For the terms with index $k_i$, {\it i.e.} the terms with $m_{k_i}<m_{k_i}'$, we use the expression of $e_{\Theta_0}=\sum_{m_{l_j}'<m_{l_j}} (m_{l_j}-m'_{l_j})e_{C_{l_j}}+\hbox{others}$ to pair with. Since the irreducible curves involved in the expression are all different from $C_{k_i}$, we have $e_{C_{k_i}}\cdot e_{\Theta_0}\ge 0$. Similarly, for $C_{l_j}$, we use the expression of $e_{\Theta_0'}=\sum_{m_{k_i}<m_{k_i}'}(m_{k_i}'-m_{k_i})e_{C_{k_i}}+\hbox{others}$. We have  $e_{C_{l_j}}\cdot e_{\Theta_0'}\ge 0$. For $C_{q_p}$, we could use either $e_{\Theta_0}$ or $e_{\Theta_0'}$. Since $e_{\Theta_0}=e_{\Theta_0'}$, we have $(e-e_{\Theta_0})\cdot e_{\Theta_0}\ge 0$. 

Moreover, we have the strict inequality $e\cdot e_{\Theta_0}> e_{\Theta_0}^2$. This is because we assume the original $\Theta, \Theta'$ have at least one common component and because they are connected  by Theorem 1.5 of \cite{LZrc}. The first fact implies there is at least one index in $k_i$, $l_j$ or $q_p$. The second fact implies at least one of the intersection of $C_{k_i}$, $C_{l_j}$ or $C_{q_p}$ with $e_{\Theta_0}$ as in the last paragraph would take positive value.

The inequality  $e\cdot e_{\Theta_0}> e_{\Theta_0}^2$ implies there are more intersections  than the homology intersection number $e_{\Theta_0}^2$ of our new subvariety $\Theta_0$ and $\Theta_0'$. This contradicts to the  positivity of local intersection and the fact that $\Theta_0, \Theta_0'$ have no common component. Hence $\Theta=\Theta'$.

We will use  $C_{x_1, \cdots, x_l}$ to denote the unique subvariety passing through the $l$ points $\{x_1, \cdots, x_l\}$. Apparently, changing the order of $x_i$ gives the same curve. Thus we obtain a well-defined map $h: (x_1, \cdots, x_l)\mapsto C_{x_1, \cdots, x_l}$ from $\hbox{Sym}^l  S^2\cong \mathbb CP^l$ to $\mathcal M_e$.

Since $S$ is $J$-holomorphic and $e'\cdot e=e^2+1=l>0$, 
any curve in $\mathcal M_e$ intersects with $S$ at exactly $l$ points by the positivity of local intersection of distinct irreducible $J$-holomorphic subvarieties. 
Therefore $h$  is one-to-one and  surjective.

Now let us show that $h$ is a homeomorphism, namely both $h$ and $h^{-1}$ are continuous. 
Since $\hbox{Sym}^l  S^2$ is Hausdorff  and $\mathcal M_e$ is compact, 
if we can  show that $h^{-1}:\mathcal M_e\to \hbox{Sym}^l  S^2$ is continuous, it follows that $h$ is also continuous.
 To show $h^{-1}$  is continuous, consider a sequence $C_i \in \mathcal M_e$ approaching to its Gromov-Hausdorff limit $C$. Let the intersection of $C_i$ (resp. $C$) with $S$ be $(x_1^i, \cdots, x_l^i)$ (resp. $(x_1, \cdots, x_l)$). Then $(x_1^i, \cdots, x_l^i)$ has to approach $(x_1, \cdots, x_l)$ by the first item of the definition of topology on $\mathcal M_e$. Therefore $h$ is a homeomorphism.  
\end{proof}

We remark that the subvarieties determined by $(x_1^i, \cdots, x_l^i)$ with $x_j^i\in M$ will not converge in general, especially when two points in the tuple converge to same point by a simple dimension counting. However when $x_i\in S$, they indeed converge in the Gromov-Hausdorff sense since the tangent plane is fixed as that of $S$.

Notice that since the configuration of a subvariety of a sphere class is a tree \cite{LZrc}, a non-primitive class will never be an irreducible component in a reducible subvariety. This is because there will be another irreducible component intersecting the non-primitive class more than once to form cycles, since the reducible subvariety is connected.

Usually, nefness is a numerical way to guarantee the existence of smooth $J$-holomorphic curves.

\begin{lemma}\label{nef=>smooth}
If $e$ is a $J$-nef class with $g_J(e)=0$ and $\mathcal M_e\ne \emptyset$, then it is represented by a smooth $J$-holomorphic sphere of non-negative self-intersection. 
\end{lemma}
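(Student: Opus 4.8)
The plan is to bootstrap from the genus bound of \cite{LZrc} together with the unobstructedness and dimension-count results already recorded in this section. Let $\Theta=\{(C_i,m_i)\}\in\mathcal M_e$ be any subvariety. First I would invoke Theorem \ref{spheresphere}: since $e$ is $J$-nef with $g_J(e)=0$, the subvariety $\Theta$ is connected and each irreducible component $C_i$ is a smooth rational curve, so $g_J(e_{C_i})=0$ for every $i$. In particular $\mathcal M_{irr,e}$ is a candidate to be non-empty; and by Theorem \ref{unobstructed}, because $e$ is a $J$-nef spherical class with $e\cdot e\ge 0$ (recall that $J$-nef plus effective forces $e\cdot e\ge 0$, as noted in Section \ref{secnef}), the irreducible moduli space $\mathcal M_{irr,e}$ is in fact a non-empty smooth manifold of dimension $2l_e$. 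So there exists some $\Theta_0\in\mathcal M_{irr,e}$, i.e.\ an irreducible $J$-holomorphic subvariety $C$ in class $e$, and since $g_J(e_C)=g_J(e)=0$, the adjunction inequality of \cite{McD} (quoted in Section 2.1) forces $C$ to be a smooth rational curve. Finally $e\cdot e\ge 0$ gives the asserted non-negativity of the self-intersection.

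A second, essentially equivalent route, in case one prefers to avoid quoting the full strength of Theorem \ref{unobstructed}: take any $\Theta=\{(C_i,m_i)\}\in\mathcal M_e$, which is non-empty by hypothesis. By Theorem \ref{spheresphere} it is connected with all components smooth rational curves. If $\Theta$ is irreducible we are done as above. If it is reducible, apply the dimension bound \eqref{red-dim}, $\sum_i m_i l_{e_{C_i}}\le l_e-1$; combined with $l_e=e\cdot e+1$ this constrains the components strongly, but it does not by itself produce a smooth representative of $e$ — so one still needs the existence statement of Theorem \ref{unobstructed} that $\mathcal M_{irr,e}\ne\emptyset$. Hence the clean argument really is: $\mathcal M_e\ne\emptyset$ and $e$ $J$-nef spherical with $e\cdot e\ge 0$ $\Rightarrow$ $\mathcal M_{irr,e}\ne\emptyset$ by Theorem \ref{unobstructed} $\Rightarrow$ the unique-up-to-reparametrization irreducible representative has $g_J=0$ hence is a smooth sphere, of self-intersection $e\cdot e\ge 0$.

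The only genuine point requiring care — and the step I expect to be the main obstacle in a fully self-contained write-up — is the passage from "$e$ is $J$-nef and $J$-effective" to "$e\cdot e\ge 0$". This is asserted in Section \ref{secnef} and follows from the light cone / intersection positivity philosophy: if $\Theta\in\mathcal M_e$ then $e\cdot e=e\cdot e_\Theta=\sum_i m_i\,(e\cdot e_{C_i})\ge 0$ since each $e\cdot e_{C_i}\ge 0$ by $J$-nefness. Once this is in hand, the hypotheses of Theorem \ref{unobstructed} are met verbatim, and the lemma is immediate. I would therefore present the proof as the three-line chain: effectivity $+$ nefness $\Rightarrow e\cdot e\ge 0$; then Theorem \ref{unobstructed} $\Rightarrow \mathcal M_{irr,e}\ne\emptyset$; then adjunction ($g_J(e)=0$) $\Rightarrow$ the irreducible representative is a smooth rational curve, of self-intersection $e\cdot e\ge 0$.
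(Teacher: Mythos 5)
Your argument is essentially the paper's own proof: the paper deduces $e\cdot e\ge 0$ from $J$-nefness plus effectivity and then cites Proposition 4.5 of \cite{LZ-generic} (the source of Theorem \ref{unobstructed}) to obtain a smooth rational curve, exactly the chain you propose. The detour through Theorem \ref{spheresphere} and the dimension bound \eqref{red-dim} is unnecessary but harmless, so the proposal is correct and takes the same route.
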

\begin{proof}
Since $e$ is $J$-nef and $J$-effective, we have $e\cdot e\ge 0$. Then by Proposition 4.5 of \cite{LZ-generic}, we know $e$ is represented by a smooth rational curve.
\end{proof}

\begin{cor}
Suppose $J$ is a tamed almost complex structure on a rational surface $M$. If $e$ is a $J$-nef class with $g_J(e)=0$ and $\mathcal M_e\ne \emptyset$, then $\mathcal M_e$ is homeomorphic to $\mathbb CP^l$ with $l=e\cdot e+1$.
\end{cor}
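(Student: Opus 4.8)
The plan is to reduce this corollary directly to Theorem \ref{homeoCPl} together with Lemma \ref{nef=>smooth}, handling the only genuine gap, namely non-primitivity. First I would invoke Lemma \ref{nef=>smooth}: since $e$ is $J$-nef with $g_J(e)=0$ and $\mathcal M_e\neq\emptyset$, the class $e$ is represented by a smooth $J$-holomorphic sphere with $e\cdot e\geq 0$, so $l=\max\{0,e\cdot e+1\}=e\cdot e+1$. If $e$ is primitive, Theorem \ref{homeoCPl} applies verbatim and gives $\mathcal M_e\cong\mathbb{CP}^l$, and we are done. So the real content is to deal with the case that $e$ is not primitive.

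For the non-primitive case I would use the structural remark made in the excerpt just after Theorem \ref{homeoCPl}: the only non-primitive sphere classes are Cremona equivalent to a double line class $2H-\sum(\cdots)$ in $\mathbb{CP}^2\#k\overline{\mathbb{CP}^2}$ (equivalently, writing $e=2f$ with $f$ primitive $J$-nef of $g_J(f)=0$, $f\cdot f=0$), and moreover such a class cannot appear as a proper irreducible component of a reducible connected subvariety of a sphere class because the configuration is a tree (a second component would have to meet it twice, creating a cycle). The clean route is: for $M=\mathbb{CP}^2$ itself, $e=2H$ and $\mathcal M_{2H}\cong\mathbb{CP}^5$ by Sikorav's result, already quoted in the proof of Theorem \ref{homeoCPl}. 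For a genuine blow-up, I would mimic the proof of Theorem \ref{homeoCPl}: build the ``dual'' curve $S$ in the $J$-nef class $e'=e+H_e$ — but here one must first produce an $H_e$ with $g_J(H_e)=0$, $H_e\cdot e=1$; since $e$ is not primitive this is impossible (the pairing $H_e\cdot e$ is then even), so instead I would take $H_e$ with $H_e\cdot f=1$ where $e=2f$, giving $H_e\cdot e=2$ and, more to the point, use $S$ in a class pairing $e'\cdot e=e\cdot e+\text{(something)}=l$ exactly as before after an adjunction-number check $(e+2H_e)^2+K\cdot(e+2H_e)$ or a suitable variant. Then the uniqueness argument of Lemma \ref{uniquereducible} (second-type intersection version, as used in Theorem \ref{homeoCPl}) shows there is a unique subvariety through any $l$-tuple of points on $S$, and the map $\mathrm{Sym}^lS^2\cong\mathbb{CP}^l\to\mathcal M_e$ is a continuous bijection between a Hausdorff space and a compact space, hence a homeomorphism.

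The step I expect to be the main obstacle is producing the correct dual class $H_e$ (or $S$) in the non-primitive situation: Lemma \ref{H_enef} is stated only for primitive $e$, and its proof genuinely uses primitivity, so one cannot simply quote it. I would address this by working with the primitive class $f$ with $e=2f$: apply Lemma \ref{H_enef} to $f$ to get a $J$-nef $H_f$ with $g_J(H_f)=0$ and $H_f\cdot f=1$, then set $e'=f+H_f$ (the dual for $f$, pairing $e'\cdot f=f\cdot f+1=1$) so that $e'\cdot e=2$; more usefully one wants a class pairing $e$ to exactly $l=4f\cdot f+1=1$ when $f\cdot f=0$, i.e. $l=1$ — wait, since $f\cdot f=0$ forces $e\cdot e=0$ and $l=1$, the double-line class in a blow-up has $e\cdot e=(2f)^2=0$, so actually $\mathcal M_e\cong\mathbb{CP}^1$ and this case is covered by the $e\cdot e=0$ analysis in Proposition \ref{connected} applied to $e$ directly, with no need for primitivity at all. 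So the resolution is cleaner than feared: the only non-primitive sphere class with $e\cdot e>0$ is $2H$ in $\mathbb{CP}^2$ (giving $\mathbb{CP}^5$ via \cite{Sik}), every non-primitive sphere class in a blow-up has square $0$ and is covered by Proposition \ref{connected}, and every primitive case is Theorem \ref{homeoCPl}; assembling these three observations completes the proof.
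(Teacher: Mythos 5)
Your main skeleton agrees with the paper: the paper's proof of this corollary is exactly the combination of Lemma \ref{nef=>smooth} (which gives a smooth spherical representative with $e\cdot e\ge 0$, hence $l=e\cdot e+1$) with Theorem \ref{homeoCPl}. The problem is your treatment of the non-primitive case, which contains a concrete arithmetic error that makes the patch collapse. If $e=2f$ is a sphere class, then $g_J(2f)=2f^2+K\cdot f+1=0$ forces $K\cdot f=-1-2f^2$, and the classification quoted in the paper says such $e$ is Cremona equivalent to the double line $2H$; in particular $f^2=1$, $e\cdot e=4$ and $l=5$, \emph{not} $f\cdot f=0$ and $e\cdot e=0$. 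Concretely, $2H$ itself, or $2(2H-E_1-E_2-E_3)$, on $\mathbb{CP}^2\#k\overline{\mathbb{CP}^2}$ with $k\ge 1$ can be $J$-nef and effective, has square $4$, and is not covered by the $e\cdot e=0$ analysis of Proposition \ref{connected}. Your appeal to Sikorav only settles $M=\mathbb{CP}^2$, so the non-primitive case on a genuine blow-up is left unproved in your proposal.

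Your earlier instinct was actually the right one: since every class pairs evenly with $e=2f$, no ``dual'' rational curve with pairing $1$ (or any odd number, in particular $l=5$) can exist, so the primitive-case mechanism of Lemma \ref{H_enef} cannot be transplanted. This is precisely why the paper handles the non-primitive case separately in Section 5.1: there the dual curve $S$ is a smooth \emph{elliptic} curve in class $3H$ (obtained from nodal configurations via \cite{Sik}), one has $e\cdot[S]=6$ with only five points free, and a second-type-intersection version of the uniqueness argument of Theorem \ref{homeoCPl} shows the sixth intersection point is determined, giving the identification of $\mathcal M_{2H}$ with $\mathrm{Sym}^5 S^2\cong\mathbb{CP}^5$ (for $k<9$ one can instead use the classification of negative rational curves from \cite{p=h}). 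If you want your write-up to cover the corollary as stated for non-primitive classes, you must either restrict the statement to primitive $e$ (matching the hypotheses of Theorem \ref{homeoCPl}, which is all the paper's two-line proof literally uses) or import that Section 5.1 argument; the reduction to Proposition \ref{connected} as written is not available.
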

\begin{proof}
It is a combination of Lemma \ref{nef=>smooth} and Theorem \ref{homeoCPl}.
\end{proof}

Theorem \ref{homeoCPl} also gives a vanishing result for sheaf cohomology of complex rational surfaces. 
\begin{prop}
Let $M$ be a complex rational surface and 
$D$ a smooth rational curve with $D^2\ge 0$. Then $H^p(M, \mathcal O(D))=0$ for $p>0$. 
\end{prop}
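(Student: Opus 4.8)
The plan is to run the standard cohomology sequence of the divisor $D$ and feed in the two vanishings special to rational surfaces. First I would take
$$0\longrightarrow \mathcal O_M\longrightarrow \mathcal O_M(D)\longrightarrow \mathcal O_D(D)\longrightarrow 0 ,$$
and identify the quotient: since $D$ is a smooth rational curve, $D\cong\mathbb P^1$ and $\mathcal O_D(D)$ is the normal bundle $\mathcal N_{D/M}\cong\mathcal O_{\mathbb P^1}(D\cdot D)$, of degree $D^2\ge 0$. Hence $H^1\bigl(D,\mathcal O_D(D)\bigr)=H^1\bigl(\mathbb P^1,\mathcal O(D^2)\bigr)=0$, while $H^2\bigl(D,\mathcal O_D(D)\bigr)=0$ because $D$ is a curve. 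This is the only place the hypothesis $D^2\ge 0$ enters (indeed $D^2\ge -1$ would do).

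Next I would use that a rational surface has $q=p_g=0$, so $H^1(M,\mathcal O_M)=0$ and $H^2(M,\mathcal O_M)\cong H^0(M,K_M)^{\vee}=0$. In the long exact cohomology sequence the segment $H^1(M,\mathcal O_M)\to H^1(M,\mathcal O_M(D))\to H^1(D,\mathcal O_D(D))$ has both ends zero, giving $H^1(M,\mathcal O(D))=0$; likewise $H^2(M,\mathcal O_M)\to H^2(M,\mathcal O_M(D))\to H^2(D,\mathcal O_D(D))$ has both ends zero, giving $H^2(M,\mathcal O(D))=0$. For $p\ge 3$ the group $H^p(M,\mathcal O(D))$ vanishes for dimension reasons, so $H^p(M,\mathcal O(D))=0$ for all $p>0$.

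I expect no genuine obstacle, the input being just the surface classification plus Serre duality; the only point to handle with a little care is the identification $\mathcal O_D(D)\cong\mathcal O_{\mathbb P^1}(D^2)$ and the attendant $H^1$-vanishing. It is worth recording that the statement is the cohomological shadow of Theorem \ref{homeoCPl}: since $M$ is simply connected, $H^1(M,\mathcal O_M)=0$ makes $c_1\colon\mathrm{Pic}(M)\hookrightarrow H^2(M,\mathbb Z)$ injective, so for the integrable $J$ the moduli space $\mathcal M_{[D]}$ is exactly the complete linear system $|D|=\mathbb P(H^0(M,\mathcal O(D)))$, which Theorem \ref{homeoCPl} identifies with $\mathbb{CP}^{l}$, $l=\max\{0,D^2+1\}=D^2+1$ (in the lone non-primitive case, $[D]$ Cremona-equivalent to $2H$, one uses instead $\mathcal M_{2H}=\mathbb{CP}^5$); hence $h^0(M,\mathcal O(D))=D^2+2$. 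This agrees with Riemann--Roch, $\chi(\mathcal O(D))=1+\frac{1}{2}(D^2-K_M\cdot D)=D^2+2$ via the adjunction identity $K_M\cdot D=-2-D^2$, so $h^1=h^2$; and $h^2(M,\mathcal O(D))=h^0(M,\mathcal O(K_M-D))=0$ since $D$, being an irreducible curve with $D^2\ge 0$, is nef while $D\cdot(K_M-D)=-2-2D^2<0$ forbids $K_M-D$ from being effective. The two routes agree, and I would present the short cohomology-sequence argument as the main proof.
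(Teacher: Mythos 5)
Your argument is correct, but it follows a genuinely different route from the paper's. You prove the vanishing by the standard restriction sequence $0\to\mathcal O_M\to\mathcal O_M(D)\to\mathcal O_D(D)\to 0$, feeding in $q=p_g=0$ for a rational surface together with $H^1(\mathbb P^1,\mathcal O(D^2))=0$; this is self-contained classical sheaf cohomology and never touches the moduli-space machinery. The paper instead derives the statement as an application of Theorem \ref{homeoCPl}: Serre duality kills $H^2$ because $(K-D)\cdot D<0$ prevents $K-D$ from being effective, Riemann--Roch gives $\chi(\mathcal O(D))=D^2+2$, and the homeomorphism $\mathcal M_{[D]}\cong\mathbb{CP}^l$ with $l=D^2+1$ pins down $h^0(M,\mathcal O(D))=D^2+2$, forcing $h^1=0$ --- which is exactly the computation you sketch in your closing remark as a consistency check, so you have in effect reconstructed the paper's proof as your secondary route. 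The difference in emphasis is the point: in the paper the proposition is meant to exhibit the moduli theorem for tamed almost complex structures recovering a classical vanishing, whereas your main argument is shorter, more elementary, independent of Theorem \ref{homeoCPl} (and shows $D^2\ge -1$ already suffices), but does not illustrate that moduli-theoretic content. Both arguments are valid proofs of the statement as written.
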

\begin{proof}

First, $K-D$ is not an effective divisor since $(K-D)\cdot D<0$ and $D$ is a smooth divisor with $D^2\ge 0$. Hence, $H^2(M, \mathcal O(D))=H^0(M, \mathcal O(K-D))=0$. Moreover, for $p>2$, $H^p(M, \mathcal O(D))=0$ by dimension reason.

To show $H^1(M, \mathcal O(D))=0$, we first compute the Euler characteristic $\chi(D)$. By Riemann-Roch theorem for non-singular projective surfaces,
\begin{equation}\label{RRsurface}
\begin{array}{lll}
 \chi(D)&=&\chi(0)+\frac{1}{2}([D]\cdot [D]-K\cdot [D])\\
&&\\
 &=&\chi(0)+D^2+1\\
&&\\
 &=&\frac{1}{12}(c_1^2+c_2)+D^2+1\\
&& \\
 &=&D^2+2\\
 \end{array}
 \end{equation}
 when $M$ is a rational surface. 
On the other hand, it follows from Theorem \ref{homeoCPl} that $$\dim H^0(M, \mathcal O(D))=l+1=D^2+2.$$ Since $\chi(D)=\dim H^0(M, \mathcal O(D))-\dim H^1(M, \mathcal O(D))+\dim H^2(M, \mathcal O(D)),$ we have $H^1(M, \mathcal O(D))=0$.
\end{proof}

\section{$J$-holomorphic tori}
This section is on the $J$-holomorphic tori, {\it i.e.} a subvariety in a class $e$ with $g_J(e)=1$. The first part is on a  non-associative addition on elliptic curve induced from almost complex structures of the rational surface. In the second part, we will explore the method in last two sections to study the moduli space $\mathcal M_e$. We will explain our method through an example.
\subsection{Non-associative addition on elliptic curve}
In this subsection, we will show that the primitive case of the statement of Theorem \ref{homeoCPl} follows from a more general framework on the generalization of the addition in the elliptic curve theory. Hence, we start with the classical algebraic curve theory. 

For an algebraic curve $C$ of genus $g$, there is a natural map from the symmetric product to the Jacobian of the curve $u: \hbox{Sym}^nC \rightarrow J(C)$ by $$u(p_1, p_2, \cdots, p_n)\rightarrow (\sum_i \int_{p_0}^{p_i}\omega_1, \cdots, \sum_i \int_{p_0}^{p_i}\omega_g),$$ where $\omega_1, \cdots, \omega_g$ form a basis of $H^0(C, K)$. When $n\ge 2g-2$, this map is very useful in determining the topology of $\hbox{Sym}^n\Sigma_g$. This matches with the philosophy of our discussion in previous two sections. The elements of symmetric product $\hbox{Sym}^nC$ are just the divisors of degree $n$ on $C$. The subset $\hbox{Pic}^n(C)$ of the Picard group is the isomorphism classes of degree $n$ line bundles. It is just the quotient of $\hbox{Div}^n(C)$ modulo linear equivalence.  Abel's theorem says that the map $u$ factors through $\hbox{Pic}^n(C)$ and the induced $\phi: \hbox{Pic}^n(C)\rightarrow J(C)$ is a bijection.

When we take the curve $C$ to be an elliptic curve, then the Jacobian is identified with the elliptic curve with its addition structure. In particular, the map $u$ is now a map $$u: \hbox{Sym}^n(C)\rightarrow C, \, \, (p_1, \cdots, p_n)\mapsto p_1+\cdots +p_n,$$where the addition is the one of the elliptic curve $C$. We first show that, when $n>0$, the map is surjective. The canonical divisor $K$ is trivial and $h^0(K-D)=0$ for any effective divisor $D$. By Riemann-Roch theorem $h^0(D)=n-g+1+h^0(K-D)=n>0$. Hence $u$ is surjective. The preimage $u^{-1}(c)$ for a point $c\in C$ is the set of all possible $(p_1, \cdots, p_n)\in \hbox{Sym}^n(C)$ with $p_1+\cdots +p_n=c$. By above calculation $h^0(D)=n$, hence the map $u$ is a holomorphic fibration over the elliptic curve $C$ with fiber $\mathbb CP^{n-1}$. In particular, we have shown the following, which we want to generalize it to our setting later.

\begin{lemma}\label{ellcurv}
The space of unordered $n$-tuples $(y_1, \cdots, y_{n})$ on an elliptic curve with $y_1+\cdots+y_{n}=0$ is homeomorphic to the space of unordered $n-1$-tuples $(x_1, \cdots, x_{n-1})$ on a rational curve, i.e. to $\hbox{Sym}^{n-1}S^2=\mathbb CP^{n-1}$.
\end{lemma}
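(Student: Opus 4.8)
The plan is to use Abel's theorem for elliptic curves, exactly as set up in the paragraph preceding the statement, and to identify the two spaces as total space and fiber of the Abel--Jacobi map. Concretely, fix an elliptic curve $C$ (with a chosen origin so that its group law and its Jacobian are identified), and consider the map $u\colon \operatorname{Sym}^n C \to C$, $(y_1,\dots,y_n)\mapsto y_1+\cdots+y_n$. The space in the statement, call it $F_0$, is the fiber $u^{-1}(0)$. The first step is to record that $u$ is a holomorphic fiber bundle: by the Riemann--Roch computation already carried out (for a degree-$n$ effective divisor $D$ on $C$ one has $h^0(D)=n$ when $n>0$, and $h^0(K-D)=0$ since $K$ is trivial), every fiber $u^{-1}(c)$ is the linear system $|D|\cong \mathbb{P}^{n-1}$ of a degree-$n$ divisor $D$ with $[D]\mapsto c$ under Abel. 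Since all fibers are projective spaces of the same dimension and $u$ is a proper holomorphic submersion (or, more elementarily, translations of $C$ act transitively on the base and lift to homeomorphisms of $\operatorname{Sym}^n C$ permuting the fibers), $u$ is a locally trivial fiber bundle with fiber $\mathbb{CP}^{n-1}$.

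The second step is simply to read off that the particular fiber $F_0=u^{-1}(0)$ is therefore homeomorphic to $\mathbb{CP}^{n-1}$, and to match this with $\operatorname{Sym}^{n-1}S^2$. The latter identification $\operatorname{Sym}^{n-1}\mathbb{CP}^1\cong \mathbb{CP}^{n-1}$ is the classical fact that an unordered $(n-1)$-tuple of points on $\mathbb{CP}^1$ is the same as a degree-$(n-1)$ effective divisor, hence the same as a nonzero degree-$(n-1)$ homogeneous polynomial in two variables up to scale, i.e.\ a point of $\mathbb{P}(\Gamma(\mathbb{CP}^1,\mathcal{O}(n-1)))\cong \mathbb{CP}^{n-1}$; this is a homeomorphism (indeed biholomorphism). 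Stringing these together gives $F_0\cong \mathbb{CP}^{n-1}\cong \operatorname{Sym}^{n-1}S^2$, which is the assertion.

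An even more hands-on route, which avoids invoking bundle triviality, is to exhibit the homeomorphism directly. Pick a point $p_0\in C$. Given an unordered $n$-tuple $(y_1,\dots,y_n)$ with $\sum y_i=0$, the divisor $D=y_1+\cdots+y_n$ is linearly equivalent to $np_0$ (by Abel, since both have degree $n$ and the same image $0$ in the Jacobian), so $D=\operatorname{div}(f)+np_0$ for a meromorphic function $f$ unique up to scalar; sending $(y_1,\dots,y_n)$ to the class $[f]\in \mathbb{P}(L(np_0))$ is well defined, and $\dim L(np_0)=n$ by Riemann--Roch, so the target is $\mathbb{CP}^{n-1}$. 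One then identifies $\mathbb{P}(L(np_0))$ with $\operatorname{Sym}^{n-1}S^2$ by choosing a degree-$n$ rational map $C\to \mathbb{P}^1$ realizing $|np_0|$ (or more simply by composing with the linear-system identification above), and checks bijectivity and bicontinuity from the defining properties of linear systems.

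The step I expect to be the only real point requiring care is the topological statement that $u$ is locally trivial (equivalently, that the homeomorphism type of the fiber does not jump): this is where one genuinely uses that $K_C$ is trivial, so that $h^0(D)$ is constant equal to $n$ on all of $\operatorname{Sym}^n C$ and there are no ``jumping'' fibers — in higher genus the analogue fails precisely because of special divisors. Given the elliptic hypothesis this is immediate, so the proof is short; the bulk of it is bookkeeping the classical Abel--Jacobi and $\operatorname{Sym}^{n-1}\mathbb{CP}^1\cong\mathbb{CP}^{n-1}$ identifications, all of which are standard and can be cited.
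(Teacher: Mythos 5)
Your proof is correct and follows essentially the same route as the paper: the space in question is the fiber over $0$ of the Abel--Jacobi map $u\colon \hbox{Sym}^n C\to C$, and Riemann--Roch with $K_C$ trivial gives $h^0(D)=n$, so this fiber is the linear system $|D|\cong\mathbb CP^{n-1}$, identified with $\hbox{Sym}^{n-1}S^2$ in the standard way. The extra material on local triviality of $u$ is not needed for the lemma itself (only the single fiber matters), but it matches the paper's surrounding discussion of the $\mathbb CP^{n-1}$-fibration over the elliptic curve.
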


Topologically, we have shown the $\hbox{Sym}^n(T^2)$ is a $\mathbb CP^{n-1}$ bundle over $T^2$. Moreover, it can be shown that the fibration is non-trivial. For simplicity of notation, we argue it for $n=2$. A section of $u$ is given by $c\mapsto (x, c-x)$ where $c$ varies in the base $C$. We take two such sections corresponding to $x$ and $x'$ with $x\ne x'$. Then they intersect at only one point corresponding to the value $c=x+x'$. Hence $\hbox{Sym}^2(T^2)$ is the nontrivial $S^2$ bundle over $T^2$.

Let it digress a bit more on the symmetric product to put the results of previous sections into our current discussion. In fact, when $n$ is small there are less  rational curves embedded in $\hbox{Sym}^n\Sigma_g$. For example, when $n=1$, it is clear that there is no embedded sphere unless $g=0$. 
When $n=2$ and $g>1$, $\hbox{Sym}^2\Sigma_g$ always has an embedded symplectic sphere. It could be seen by choosing a hyperelliptic structure $\tau$ on $\Sigma_g$. Then $h^0(p+\tau(p))=2$ which provides a $\mathbb CP^1$. Hence, when $C$ is hyperelliptic, the map $u$ has a fiber $\mathbb CP^1$ and each other fiber a single point. This rational curve has self-intersection $1-g(C)$. Since, a $-1$ curve has non-trivial Gromov-Witten invariant and $T^4$ does not admit any such curve, this is one way to see any genus $2$ curve is hyperelliptic. When $C$ with $g(C)\ge 3$ is not hyperelliptic, this $\mathbb CP^1$ is not holomorphic and is mapped to the image in its Jacobian.
 When $n$ gets larger, we have more non-trivial linear systems which give us embedded projective spaces in the symmetric product. 
 
Now we end the digression and generalize the above discussion to our setting.

 As mentioned in the proof of Theorem \ref{homeoCPl}, all non-primitive sphere classes are Cremona equivalent to $2H$ in $\mathbb CP^2\#k\overline{\mathbb CP^2}$, hence of self-intersection $4$ and $l=5$. Hence, for the convenience of notation, we will write $e=2H$ in the following. When $k<9$, we can apply the classification of negative rational curves in \cite{p=h} to identify all possible subvarieties in class $2H$ and show the moduli space is $\mathbb CP^5$. 

When $k\ge 9$, one might hope to use a method similar to what was done for primitive classes. Since both $2H$ and $H$ are $J$-nef classes with $g_J(e)=0$, we could find smooth rational curves representing $2H$ and $H$. Hence $\{(2H, 1), (H, 1)\}$ is a nodal curve in the case of Corollary 2 of \cite{Sik}. Hence, one could find a smooth (elliptic curve) representative of $3H$. Let it be $S$.

For any $5$ points tuples $(y_1, \cdots, y_5)\in \hbox{Sym}^5S$, there is a rational curve in class $e$ passing through it. However $e\cdot [S]=6$ and each intersection is positive, we have to show that the sixth intersection with $S$ (possibly coinciding with one of $y_i$) is fixed by the first five. In other words, we want to show there is a unique (possibly reducible) rational curve $C_{y_1, \cdots, y_5}$ in class $e$ passing through $(y_1, \cdots, y_5)$. Although $S$ is an elliptic curve, the class $2H$ is spherical. Hence, the corresponding argument in Theorem \ref{homeoCPl} confirms the uniqueness of the rational curve $C_{y_1, \cdots, y_5}$.

When $J$ is an integrable complex structure, it is the topological interpretation of the abelian group structure of elliptic curves: for any divisor $(y_1, \cdots, y_5)$ of an elliptic curve, we can canonically choose the sixth point as $-(y_1+\cdots+y_5)$. Hence, the intersection points correspond to an unordered $6$-tuple $(y_1, \cdots, y_6)$ on an elliptic curve with $y_1+\cdots+y_6=0$. A conical curve determines such a $6$-tuple on $S$ by its intersection with the elliptic curve $S$ as we will show that it follows from Cayley-Bacharach theorem. On the other hand, any such $6$-tuple determines the cubic curve uniquely as we show in the last paragraph. Since the conical curves, {\it i.e.} the curves in class $2H$, form a linear system of dimension $5$, we have shown such $(y_1, \cdots, y_6)$ form a $\mathbb CP^5$. This is a special case and another interpretation of Lemma \ref{ellcurv}.

In particular, we obtain a map $h: (x_1, \cdots, x_5) \mapsto (y_1, \cdots, y_6)\mapsto C_{y_1, \cdots, y_5}$ from $\hbox{Sym}^5 S^2\cong \mathbb CP^5$ to $\mathcal M_e$. It is clear $h$ is surjective, injective and it is continuous. 

When $J$ is a non-integrable almost complex structure, we could define a commutative addition by the exactly same way. Recall that $[S]=3H$. Then any rational curves in class $H$ would intersect $S$ at three points. Let them be $z_1, z_2, z_3$. Since two points are enough to determine the rational curve in class $H$, we could define a symmetric function $z_3=f_1^-(z_1. z_2)$, which could be viewed as the negative of an ``addition". 
If we further take a point $O\in S$ to be the ``zero", we are able to determine the addition $f_1(z_1, z_2)=f_1^-(f_1^-(z_1, z_2), O)$. Notice, in the integrable case the zero point has to be an inflection point. But we do not require this to be true, since we do not require $f_1^-(O, O)=O$. 
Similarly, we could associate another symmetric function $f_2$ to five points on $S$: $z_6=f_2^-(z_1, \cdots, z_5)$ where $z_1, \cdots, z_6$ are intersection points of a conic with $S$, and $f_2(z_1, \cdots, z_5)=f_1^-(f_2^-(z_1, \cdots, z_5), O)$. The functions $f_1, f_2$ could be viewed as deformations of addition structure on the elliptic curve $S$.

However, in general, this new ``addition" $f_1$ is not associative albeit commutative, thus gives rise only a loop structure instead of an abelian group. Even if we have shown that the moduli space $\mathcal M_H=\mathbb CP^2$ for any tamed almost complex structure, we do not have an authentic linear system structure in general.   While our desired result $\mathcal M_{2H}=\mathbb CP^5$ follows from the generalization of Lemma \ref{ellcurv} for $n=6$ by replacing the addition by the function $f_2$. Notice that $f_2$ is not determined by $f_1$ in general, although it is true for the integrable case as we will explain in a moment. More generally, we expect a generalization of the fibration from $\hbox{Sym}^6(T^2)$ to $T^2$ where the projection is given by $f_1^-(f_2(x_1, \cdots, x_5), x_6)$.

Return to integrable complex structure, the associativity of the addition is equivalent to

 \smallskip

{\bf Cayley-Bacharach theorem:} {\it If two (possibly degenerate) curves in $3H$ intersect at $9$ points, then any other curve passing through $8$ of them also passes through the ninth point, where intersection points are counted with multiplicities.} 

 \smallskip

In fact, we first choose points $x, y, z$ and the zero. Then by drawing lines, the intersections with the elliptic curve would give other $4$ points $x+y, -(x+y), y+z, -(y+z)$. The ninth point has two expressions, $(x+y)+z$ or $x+(y+z)$, from two different sets of $3$ lines. Then the associativity follows from the Cayley-Bacharach theorem.

This result has many other implications. In particular, it could make our previous discussion more accurate. Precisely, the addition on the elliptic curve $S$ is determined by its intersection with lines. Namely, if $S$ intersects a line in three points $x_1, x_2, x_3$, then $x_1+x_2=-x_3$. A conical curve $C$ intersect $S$ in six points $x_1, \cdots, x_6,$ and we can show $x_1+\cdots+x_6=0$. Namely,  the line $L_1$ passing through $x_1, x_2$ intersect $S$ at a third point, say $x_7$. Similarly, the line passing through $x_3, x_4$ would intersect $S$ at another point $x_8$. The line $L_3$ passing through $x_7, x_8$ intersect $S$ at another point $x_9$. Finally, there is a line $L_4$ passing through $x_5, x_9$ and intersects $S$ with another point. Cayley-Bacharach theorem implies that the point is $x_6$: the two set of curves $\{L_1, L_2, L_4\}$ and $\{C, L_3\}$ are both in class $3H$ and pass through $8$ points $x_1, \cdots, x_5, x_7, x_8, x_9$, hence $\{L_1, L_2, L_4\}$ would also intersect $S$ at the ninth intersection point, $x_6$, of $\{C, L_3\}$ and $S$. This implies $$x_6=-(x_5+x_9)=-x_5+x_7+x_8=-x_5-(x_1+x_2)-(x_3+x_4)=-(x_1+\cdots +x_5).$$

The Cayley-Bacharach theorem is not true for general tamed almost complex structure as we see above.  By dimension counting, the expected dimension of $\mathcal M_{3H}$ is $9$, hence there will be only finite many curves passing through $9$ points for a generic almost complex structure.

It will be interesting to know whether this actually gives another criterion of integrability of almost complex structures. 

\begin{question}\label{cri_int}
Assume the two-variable symmetric function $f_1$ is associative, {\it i.e.} $f_1(x, f_1(y, z))=f_1(f_1(x, y), z)$ for $x, y, z\in S$, is it true that the almost complex structure is integrable?
\end{question}

In fact, we expect to express $f_1$ as a perturbation of addition with the extra terms determined by the Nijenhuis tensor. Apparently, Question \ref{cri_int} holds only when the zero $O$ is chosen as an inflection point, {\it i.e.} a point in $S$ which has a third order contact with a rational curve in class $H$.

\subsection{Moduli space of tori: a case study}\label{tori}
In this subsection, we would like to show that the method applied in last two sections could be used to study the moduli space $\mathcal M_e$ when $g_J(e)=1$ and $e$ is $J$-nef. Here we only analyze a single example, {\it i.e.} $M=\mathbb CP^2\#8\overline{\mathbb CP^2}$ and $e=-K=3H-E_1-\cdots -E_8$. The method could be pushed to more general cases. Some of the discussion in \cite{Loo} might be useful for our discussion. However, all the subvarieties in \cite{Loo} are reduced, while we are also working with general non-reduced subvarieties, {\it i.e.} we allow multiplicities.

We assume the class $-K$ is $J$-nef. 
We discuss the possible singular subvarieties in class $-K$ of $\mathbb CP^2\#8\overline{\mathbb CP^2}$. First, we will show that the combinatorial behaviour of a reducible variety will not be too bad.

\begin{lemma}\label{g=1red}
Suppose $e$ is a $J$-nef class with $g_J(e)=1$. If $\Theta=\{(C_i, m_i)\}_{i=1}^n$ is a connected subvariety in class $e$ with $g_J(e_1)=1$ and $e_1^2\ge 0$, then $C_i$ are rational curves for $2\le i\le n$, and $$\sum_{i=1}^nm_il_{e_i}\le l_e-1.$$
\end{lemma}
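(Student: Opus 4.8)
\emph{Plan.} The statement has two parts: the genus part (each $C_i$ with $i\ge 2$ is rational) and the dimension part (the inequality $\sum_{i=1}^n m_i l_{e_i}\le l_e-1$).

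For the genus part I would apply the genus bound \eqref{genus bound} directly. Since $|\Theta|$ is connected and $e$ is $J$-nef with $g_J(e)=1\ge 0$, Theorem~1.4 of \cite{LZrc} gives $1=g_J(e)\ge\sum_i g_J(e_{C_i})$. Each irreducible subvariety has $g_J(e_{C_i})\ge 0$, and by hypothesis $g_J(e_1)=1$; hence $g_J(e_{C_i})=0$ for all $i\ge 2$, and by the adjunction inequality of \cite{McD} each such $C_i$ is then a smooth rational curve. Note that Theorem~\ref{spheresphere} cannot be invoked here, since $g_J(e)=1\ne 0$; one genuinely needs \eqref{genus bound}.

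For the dimension part, write $e_i=e_{C_i}$ and $l_{e_i}=\max\{\iota_{e_i},0\}$. Since $g_J(e)=1$, one has $\iota_e=e^2$, and I would first check $\iota_e\ge 1$ so that $l_e=\iota_e$: by $J$-nefness, $e\cdot e_1=m_1e_1^2+\sum_{j\ge 2}m_j\,e_1\cdot e_j\ge 1$, because $e_1^2\ge 0$ and, by connectedness (and $n\ge 2$), some $C_j$ with $j\ge 2$ meets $C_1$; hence $e^2\ge m_1(e\cdot e_1)\ge 1$. Thus the claim becomes $\iota_e-1\ge\sum_i m_i l_{e_i}$. Applying the quadratic identity $\iota_{a+b}=\iota_a+\iota_b+a\cdot b$ iteratively to $e=\sum_i m_i e_i$ yields
\[
\iota_e=\sum_i m_i\iota_{e_i}+\sum_i\binom{m_i}{2}e_i\cdot e_i+\sum_{i<j}m_im_j\,e_i\cdot e_j .
\]
Here $l_{e_i}=\iota_{e_i}$ for the components with $e_i^2\ge 0$ (including $C_1$, where $\iota_{e_1}=e_1^2$), while $l_{e_i}-\iota_{e_i}=|e_i^2|-1$ for a component with $e_i^2<0$ (these are rational, so $\iota_{e_i}=e_i^2+1$). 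Subtracting, and writing $P=\sum_{i<j}m_im_j\,e_i\cdot e_j\ge 0$, the inequality reduces to the purely numerical statement
\[
P\ \ge\ 1+\sum_{i:\,e_i^2<0}\Bigl(m_i(|e_i^2|-1)+\tbinom{m_i}{2}|e_i^2|\Bigr).
\]

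Establishing this last inequality is where the real work lies, and it is the step I expect to be the main obstacle. The available tools are: $J$-nefness, which for a negative component $C_i$ forces $\sum_{j\ne i}m_j\,e_i\cdot e_j\ge m_i|e_i^2|$ — many intersection points of $C_i$ with the rest of $\Theta$; positivity $e_i\cdot e_j\ge 0$ for $i\ne j$; and connectedness of $|\Theta|$ (a spanning tree of the intersection graph), which supplies the extra ``$+1$''. The delicate point is that the crude approach — summing the $J$-nefness inequalities with the weights $m_i$ — loses a factor of $2$ on edges joining two negative curves and can fail once curves of self-intersection $\le -3$ appear; instead I would charge each such deficit to the intersection points that $J$-nefness forces on $C_i$, taking care not to double-count, and keep the connectivity unit in reserve. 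Connectedness is essential: e.g.\ a genus-$1$ curve of square $0$ together with a chain of $(-2)$-curves would formally violate the bound, and such a configuration is excluded precisely because it cannot be connected when $e$ is $J$-nef. The whole argument runs parallel to the proof of \eqref{red-dim} (Lemma~4.10 of \cite{LZrc}), with the single extra unit of genus carried by $C_1$ tracked through the bookkeeping.
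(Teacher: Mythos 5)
The genus part of your proposal is fine and identical to the paper's (the genus bound from Theorem 1.4 of \cite{LZrc} plus the adjunction inequality). The gap is in the dimension inequality, which is the substance of the lemma: you reduce it to the numerical statement $P\ge 1+\sum_{e_i^2<0}\bigl(m_i(|e_i^2|-1)+\binom{m_i}{2}|e_i^2|\bigr)$ and then explicitly leave that statement unproved. This is not a minor bookkeeping issue, because the statement cannot be derived from the tools you list (nefness tested against the components, positivity of intersections, connectedness) alone. Concretely, take $m_1=m_2=m_3=1$, $e_1^2\ge 0$ with $g_J(e_1)=1$, $e_2^2=e_3^2=-3$ rational, $e_1\cdot e_2=1$, $e_1\cdot e_3=0$, $e_2\cdot e_3=3$: the configuration is connected, all pairwise intersections are non-negative, and $e\cdot e_1=e_1^2+1$, $e\cdot e_2=1$, $e\cdot e_3=0$ are all non-negative, yet $P=4$ while your right-hand side is $5$. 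This configuration is ruled out only by the hypothesis $g_J(e)=1$ (here $g_J(e)=3$, via $g_J(e)=1+\sum_i m_i(g_J(e_i)-1)+\sum_i\binom{m_i}{2}e_i^2+P$), a constraint you never feed into the dimension part; so the ``charging'' scheme you sketch cannot close the argument as described, and the step you yourself flag as the main obstacle is genuinely open in your write-up.

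The paper's proof shows the whole difficulty is avoidable. It never expands $\iota_e$ through the quadratic identity, so the negative self-intersections never enter the accounting: writing $l_e=e\cdot e=\sum_{j\le k}\bigl(m_j^2e_j^2+m_je_j\cdot(e-m_je_j)\bigr)+\sum_{i>k}m_ie_i\cdot e$, where $1,\dots,k$ label the components with $e_j^2\ge0$, connectedness gives $e_j\cdot(e-m_je_j)\ge 1$, and $m_j^2e_j^2\ge m_je_j^2$ gives $m_jl_{e_j}$ for each rational $C_j$ with $e_j^2\ge0$ and $m_1l_{e_1}+1$ for $C_1$ (the spare $1$ comes precisely from $l_{e_1}=e_1^2$ rather than $e_1^2+1$); the negative components are discarded wholesale using only $J$-nefness, $m_ie_i\cdot e\ge0$, together with $l_{e_i}=0$. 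Your own observation that $l_{e_i}=0$ for the negative components was the signal that no control of their squares or multiplicities is needed; the full expansion via $\iota_{a+b}=\iota_a+\iota_b+a\cdot b$ reintroduces exactly the terms one should never have to estimate, which is why your route stalls where the paper's two-line estimate \eqref{3terms} goes through.
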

\begin{proof}
By Theorem 1.4 of \cite{LZrc}, $\sum g_J(e)\ge \sum g_J(e_i)$. Hence there is at most one $C_i$ has $g_J(e_i)=1$, others have $g_J=0$.  This component is just our $C_1$.

Recall that $l_e=\max\{\frac{1}{2}(e\cdot e-K_J\cdot e), 0\}$. It is $\max\{e^2, 0\}$ when $g_J(e)=1$, and $\max\{e^2+1, 0\}$ when $g_J(e)=0$.

Use $1, \cdots, k$ to label the irreducible components whose classes have self-intersection at least $0$. In particular, $e_1$ is just the one in our statement satisfying $g_J(e_1)=1$ and $e_1^2\ge 0$. 
Notice   $l_{e_i}=0$ for $i=k+1, \cdots, n$.

Since $\Theta$ is connected, $e_j\cdot (e-m_je_j)\geq 1$ for each $j$.
Therefore $l_e$ can be estimated as follows:
\begin{equation} \label{3terms}
\begin{array}{lll}
 l_e&=&e\cdot e\\
&&\\
 &=&\sum_{j=1}^k(m_j^2e_j\cdot e_j+ m_je_j\cdot (e-m_je_j))  + \sum_{i=k+1}^n m_ie_i\cdot e\\
&&\\
 &\geq&1+\sum_{j=1}^k m_jl_{e_j}+ \sum_{i=k+1}^n m_ie_i\cdot e\\
&& \\
 &=& 1+\sum_{j=1}^n m_jl_{e_j} + \sum_{i=k+1}^n m_ie_i\cdot e.\\
 \end{array}
 \end{equation}
The number $1$ appears in the inequality since $l_{e_1}=e_1^2$. 
Since $e$ is $J$-nef, we have $l_e\ge 1+\sum m_jl_{e_j}$.
\end{proof}

The following proposition describes reducible varieties in the $J$-nef class $-K$.
\begin{prop}\label{-Kred}
Any subvariety in $\mathcal M_{-K}$ is connected. All irreducible components of a subvariety $\Theta=\{(C_i, m_i)\}\in\mathcal M_{red, -K}$ are smooth rational curves. Moreover, they are of negative self-intersection.
\end{prop}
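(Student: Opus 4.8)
The plan is to route everything through the lattice $K^{\perp}\subset H^2(M;\mathbb Z)$. Since the intersection form of $M=\mathbb CP^2\#8\overline{\mathbb CP^2}$ has signature $(1,8)$ and $K^2=1>0$, the sublattice $K^{\perp}$ is negative definite, so any $v\in K^{\perp}$ with $v^2\ge 0$ must vanish. Combined with $(-K)^2=1$, $g_J(-K)=1$, and the standing hypothesis that $-K$ is $J$-nef, this pins down the whole combinatorial picture; the only genuinely geometric inputs are the adjunction inequality and the fact that an irreducible curve with $g_J=0$ is a smooth rational curve.

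First I would isolate a distinguished component. For $\Theta=\{(C_i,m_i)\}\in\mathcal M_{-K}$ set $d_i=e_{C_i}\cdot(-K)$, which is $\ge 0$ by $J$-nefness. Since $\sum_i m_id_i=(-K)^2=1$ with every $m_i\ge 1$, there is exactly one component $C_1$ with $m_1=d_1=1$, and $d_i=0$ for all other $i$. For $i\ge 2$ the class $e_{C_i}$ is then a nonzero element of $K^{\perp}$, so $e_{C_i}^2\le -1$; since $g_J(e_{C_i})=\tfrac{1}{2} e_{C_i}^2+1$ must be a nonnegative integer, in fact $e_{C_i}^2=-2$ and $g_J(e_{C_i})=0$, so each $C_i$ with $i\ge 2$ is a smooth rational $(-2)$-curve.

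Next I would establish connectedness. If $|\Theta|$ decomposed into connected pieces, take any piece $\Theta''$ not containing $C_1$: it consists only of $(-2)$-curves, so $e_{\Theta''}\in K^{\perp}$, while at the same time $e_{\Theta''}^2=e_{\Theta''}\cdot(-K)-e_{\Theta''}\cdot(-K-e_{\Theta''})=0-0=0$ because $|\Theta''|$ is disjoint from the support of the other pieces; hence $e_{\Theta''}=0$, which is impossible. So $\Theta$ is connected. Then for $\Theta\in\mathcal M_{red,-K}$ — which has at least two distinct components, since $m^2e_C^2=1$ is unsolvable for $m\ge 2$ — the genus bound \eqref{genus bound} yields $\sum_i g_J(e_{C_i})\le g_J(-K)=1$, so a component of positive genus could only be $C_1$. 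But $d_1=1$ forces $K\cdot e_{C_1}=-1$ and hence $g_J(e_{C_1})=\tfrac{1}{2}(e_{C_1}^2+1)$; were this equal to $1$ we would get $e_{C_1}^2=1$, and then $-K-e_{C_1}=\sum_{i\ge 2}m_ie_{C_i}$ would lie in $K^{\perp}$ with square $1-2+1=0$, hence be $0$, forcing $e_{C_1}=-K$ and $\sum_{i\ge 2}m_ie_{C_i}=0$ — impossible since $J$ is tamed. (Once $e_{C_1}^2=1\ge 0$ one could instead quote Lemma \ref{g=1red}.) Therefore $g_J(e_{C_1})=0$ and $e_{C_1}^2=-1$, so every irreducible component of $\Theta$ is a smooth rational curve of negative self-intersection.

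The step I expect to require the most care is the ordering of the logic: the genus bound \eqref{genus bound} may only be invoked once connectedness is known, and the distinguished-component analysis must be carried out without presupposing anything about the genus of that component. Beyond that there is no analytic difficulty — the whole argument is a short chain of intersection-number computations anchored by the negative-definiteness of $K^{\perp}$.
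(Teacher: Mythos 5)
Your argument is correct, and it takes a genuinely different route from the paper's. The paper proves connectedness by splitting $\Theta$ into connected pieces, noting via nefness and disjointness that each piece satisfies $-K\cdot e_{\Theta_i}=e_{\Theta_i}^2\in\{0,1\}$, and then invoking the argument of Lemma 4.7(2) of \cite{p=h} to force the piece with $e_{\Theta_i}\cdot H>0$ to be all of $-K$; rationality of components then comes from the genus bound \eqref{genus bound} combined with Lemma \ref{g=1red} and again Lemma 4.7(2) (to exclude a positive-genus component of non-positive square), and negativity of self-intersections from the estimate \eqref{1>0+0}. You instead route everything through the negative definiteness of $K^{\perp}$ (valid because the form has signature $(1,8)$ and $K^2=1$): nefness of $-K$ isolates the unique component with $m_1=d_1=1$, adjunction plus definiteness makes every other component a smooth rational $(-2)$-curve \emph{before} connectedness is known, a connected piece avoiding $C_1$ would give a nonzero class of square zero in $K^{\perp}$, and the genus-one possibility for $C_1$ dies because $-K-e_{C_1}$ would again be a square-zero class in $K^{\perp}$, with tamedness ruling out $-K-e_{C_1}=0$. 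Your ordering is sound — you only use \eqref{genus bound} after connectedness — and in fact you could drop \eqref{genus bound} entirely: if $g_J(e_{C_1})\ge 1$ then adjunction with $K\cdot e_{C_1}=-1$ gives $e_{C_1}^2=2g_J(e_{C_1})-1\ge 1$, so $(-K-e_{C_1})^2=e_{C_1}^2-1\ge 0$ and the same lattice contradiction kills every positive genus at once. The trade-off: your proof is shorter and self-contained, avoiding the external input from \cite{p=h}, but it is tied to $K^2>0$, i.e. to $\mathbb{CP}^2\#k\overline{\mathbb{CP}^2}$ with $k\le 8$; the paper's scheme (genus bound, the dimension count of Lemma \ref{g=1red}, classification of positive-genus curve classes) is the one designed to extend to other $J$-nef classes with $g_J(e)=1$, where $e^{\perp}$ need not be negative definite.
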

\begin{proof}
Suppose there is a disconnected variety $\Theta=\cup \Theta_i$, where $\Theta_i$ are connected components. Hence $-K=\sum e_{\Theta_i}$. Since $-K$ is $J$-nef,  we have $-K\cdot e_{\Theta_i}\ge 0$. Since $(-K)^2=1$, we know $-K\cdot e_{\Theta_i}=e_{\Theta_i}^2=0$ or $1$. Since $-K=3H-E_1-\cdots -E_8=\sum e_{\Theta_i}$, we know there is a $\Theta_i$ such that $e_{\Theta_i}\cdot H>0$. Then the argument of Lemma 4.7(2) of \cite{p=h} implies $e_{\Theta_i}^2=1$ and actually $e_{\Theta_i}=-K$. Hence $\Theta$ has only one connected component. Thus any element in $\mathcal M_{-K}$ is connected.

Hence, by Theorem 1.4 of \cite{LZrc}, we have at most one $C_i$, say $C_1$, has $g_J(e_{C_1})=1$. Moreover, by Lemma \ref{g=1red}, if $g_J(e_{C_1})=1$ and $e_{C_1}^2\ge 0$, then we have $0=l_{-K}-1\ge \sum_{i=1}^nm_il_{e_{C_i}}$. Hence, all $l_{e_i}=0$. In particular, $e_{C_1}^2\le l_{e_{C_1}}=0$. Hence we have $e_{C_1}^2\le 0$ in any case. However, this contradicts to Lemma 4.7(2) of \cite{p=h}. Therefore, all irreducible components are rational curves.

Now, we can similarly argue as \eqref{3terms} for $e=-K$
\begin{equation}\label{1>0+0}
\begin{array}{lll}
 1&=&e\cdot e\\
&&\\
 &=&\sum_{j=1}^k(m_j^2e_j\cdot e_j+ m_je_j\cdot (e-m_je_j))  + (\sum_{i=k+1}^n m_ie_i\cdot e)\\
&&\\
 &\geq&\sum_{j=1}^k m_jl_{e_j}+ (\sum_{i=k+1}^n m_ie_i\cdot e)\\
&& \\
 &=& \sum_{j=1}^n m_jl_{e_j} + (\sum_{i=k+1}^n m_ie_i\cdot e).\\
 \end{array}
 \end{equation}
Hence at most one index, say $1$, has $e_1^2\ge 0$. That is, $k=1$. This happens only when the equality of \eqref{1>0+0} holds. Moreover, $e_1^2=0$ and $m_1=1$. Hence we know $K\cdot e_1=-2$ by adjunction formula. In particular, this implies $e_1\cdot (e-e_1)=2$. This inequality would prevent the equality in \eqref{1>0+0} holding. Thus all the irreducible components have negative self-intersection.
\end{proof}

In fact, since $-K$ is assumed to be $J$-nef, there are only a few possibilities for the negative curve classes. If a curve $C$ has $e_C^2<0$, then $K\cdot e_C+e_C^2\ge -2$ and $K\cdot e_C\le 0$ imply $C$ is a rational curve with $e_C^2=-1$ or $e_C^2=-2$. 

The main obstacle to generalize the rational curve argument is that $l_e$ points no longer determine a unique curve in class with $g_J(e)\ge 1$. However, the following lemma  gives us a bit room to extend our strategy in last two sections.

\begin{lemma}\label{uniquesingular}
Let $\Theta\in \mathcal M_{-K}$. If $x\in |\Theta|$ is a singular point. Then $\Theta$ is the unique element of $\mathcal M_{-K}$ passing through $x$.
\end{lemma}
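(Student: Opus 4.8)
\textbf{Proof proposal for Lemma \ref{uniquesingular}.}

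The plan is to run the intersection-theoretic argument of Lemma \ref{uniquereducible}, but now exploiting the fact that a singular point on a genus-$1$ subvariety carries ``extra weight'' that compensates for the failure of $l_{-K}$ generic points to pin down an element of $\mathcal M_{-K}$. First I would set up the contradiction: suppose $\Theta'\in\mathcal M_{-K}$ is another subvariety with $x\in|\Theta'|$ and $\Theta'\ne\Theta$. The key numerical input is that $(-K)^2=1$ and $l_{-K}=1$, so by Lemma \ref{uniquereducible}(1) two distinct subvarieties in $\mathcal M_{-K}$ can share at most a single point counted with weight one; hence if $x$ is a singular point of $\Theta$ with local intersection multiplicity (with any curve through $x$) at least $2$, we already get a contradiction unless $\Theta$ and $\Theta'$ share a common irreducible component through $x$. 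So the argument splits according to whether $x$ is a singular point of an irreducible component $C_1$ of $\Theta$, or a point where two distinct components of $\Theta$ meet, and in either case one must handle the possibility of shared components as in Lemma \ref{uniquereducible}(2).

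Concretely, I would first treat the case that $\Theta$ and $\Theta'$ have no common irreducible component. Then positivity of local intersections gives $e_\Theta\cdot e_{\Theta'} = (-K)^2 = 1$, but the singular point $x\in|\Theta|\cap|\Theta'|$ contributes at least $2$ to the local intersection number: if $x$ is a node or worse of a single component, or a transverse intersection of two components each of which passes through $x$ on both $\Theta$ and $\Theta'$, the contribution is $\ge 2$ (here one uses that $\Theta'$ must actually pass through $x$ with the appropriate local multiplicity, which follows because $\Theta'$ is connected by Proposition \ref{-Kred} and some component of $\Theta'$ runs through $x$; if only one ``branch'' of $\Theta$ at $x$ lies on $\Theta'$ one still gets a contribution $\ge 1$ from that branch plus a contribution from the other branches forced by connectedness and the homology class $-K$). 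In all sub-cases $1 = e_\Theta\cdot e_{\Theta'} \ge 2$, a contradiction. The point is that the \emph{singularity} of $\Theta$ at $x$ is precisely the phenomenon that forces an intersection multiplicity exceeding the homological number $1$.

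If instead $\Theta$ and $\Theta'$ do share at least one irreducible component, I would repeat verbatim the cancellation procedure from the proof of Lemma \ref{uniquereducible}: allow zero multiplicities, form $\Theta_0$ and $\Theta_0'$ by discarding common components (reducing the weight of the marked point $x$ accordingly each time), and observe that $\Theta_0,\Theta_0'$ are homologous with no common component. Using that $-K$ is $J$-nef one gets $(-K)\cdot e_{\Theta_0}\ge e_{\Theta_0}^2$, and the inequality is strict, $(-K)\cdot e_{\Theta_0} > e_{\Theta_0}^2 = e_{\Theta_0}\cdot e_{\Theta_0'}$, because at least one of the three types of discarded/remaining components pairs positively with $e_{\Theta_0}$ (this is exactly the strict-inequality step of Lemma \ref{uniquereducible}, and connectedness of $\Theta$ from Proposition \ref{-Kred} supplies the needed positivity). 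Since $x$ still survives as an intersection point of $\Theta_0$ and $\Theta_0'$ with weight $\ge 1$, positivity of local intersections gives $e_{\Theta_0}\cdot e_{\Theta_0'}\ge$ (number of marked intersection points) $\ge (-K)\cdot e_{\Theta_0} - (\text{something})$; tracking the bookkeeping as in Lemma \ref{uniquereducible} yields $e_{\Theta_0}\cdot e_{\Theta_0'} \ge (-K)\cdot e_{\Theta_0} > e_{\Theta_0}^2 = e_{\Theta_0}\cdot e_{\Theta_0'}$, a contradiction. Hence $\Theta = \Theta'$.

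The main obstacle I anticipate is the careful accounting at $x$ in the mixed situation where $x$ is simultaneously a singular point of $\Theta$ and $\Theta'$ shares only \emph{some} of the local branches of $\Theta$ at $x$: one must verify that after the cancellation the residual subvarieties $\Theta_0,\Theta_0'$ still meet at $x$ (or at enough forced points elsewhere) with total weight large enough to violate $e_{\Theta_0}\cdot e_{\Theta_0'} = e_{\Theta_0}^2$. This is where the hypothesis $g_J(-K)=1$ (as opposed to $g_J=0$) is genuinely used: the singular point behaves like an extra prescribed point of weight making $l_{-K}+1 = 2$ effective conditions, matching the rational-curve bound. All the other steps are routine adaptations of Lemma \ref{uniquereducible} and Proposition \ref{-Kred}.
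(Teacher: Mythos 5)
Your first case (no common components) is fine and is the same as the paper's: the singularity of $\Theta$ at $x$ forces a local contribution of at least $2$ to the intersection, against $e_\Theta\cdot e_{\Theta'}=(-K)^2=1$. (A small caveat: Lemma \ref{uniquereducible}(1) is stated for $g_J(e)=0$ and does not literally apply to $-K$; what you actually use is just positivity of local intersections of distinct irreducible curves, which is fine.) The genuine gap is in the shared-component case. After the cancellation procedure the marked point $x$ need \emph{not} survive on both residual subvarieties: for instance if $\Theta=\{(C_1,1),(C_2,1),(D,1)\}$ and $\Theta'=\{(C_1,1),(C_2,1),(D',1)\}$ with $x\in C_1\cap C_2$ and $x\notin D\cup D'$, then $\Theta_0=\{(D,1)\}$ and $\Theta_0'=\{(D',1)\}$ miss $x$ entirely. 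In that situation your key chain of inequalities collapses: positivity only gives $0\le e_{\Theta_0}\cdot e_{\Theta_0'}=e_{\Theta_0}^2<(-K)\cdot e_{\Theta_0}\le 1$, i.e. $(-K)\cdot e_{\Theta_0}=1$ and $e_{\Theta_0}^2=0$, which is not yet a contradiction, and the step $e_{\Theta_0}\cdot e_{\Theta_0'}\ge(-K)\cdot e_{\Theta_0}$ that you write down has no justification once $x$ is gone (in Lemma \ref{uniquereducible}(2) that step is powered by the hypothesis of at least $m_ie\cdot e_{C_i}$ marked points on each component, which has no analogue here where there is a single point $x$). Your heuristic that $g_J(-K)=1$ makes $x$ act as an ``extra prescribed point'' is not an argument and does not close this hole.

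What the paper does at exactly this spot is a parity argument rather than intersection bookkeeping. From $(-K)\cdot e_{\Theta_0}=1$ one gets $(-K)\cdot(-K-e_{\Theta_0})=0$, so every discarded component $C_i$ satisfies $(-K)\cdot e_{C_i}=0$ and is a $-2$-sphere; consequently
\begin{equation*}
(-K-e_{\Theta_0})\cdot e_{\Theta_0}=-\Bigl(\sum m_ie_{C_i}\Bigr)^2=2\sum m_i^2-2\sum_{i<j} m_im_j\,e_{C_i}\cdot e_{C_j}
\end{equation*}
is even, whereas it equals $(-K)\cdot e_{\Theta_0}-e_{\Theta_0}^2=1$; equivalently $1=(-K)\cdot e_{\Theta_0}\ge e_{\Theta_0}^2+2$ forces $e_{\Theta_0}^2<0=e_{\Theta_0}\cdot e_{\Theta_0'}$, impossible. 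This evenness of configurations of $-2$-curves orthogonal to $K$ is the genuinely new ingredient beyond Lemma \ref{uniquereducible}, and it is what your proposal is missing. Note also that the paper's shared-component argument makes no use of $x$ at all (it in effect shows two distinct elements of $\mathcal M_{-K}$ can never share a component, using connectedness from Proposition \ref{-Kred}); the singularity of $x$ is only needed in the no-common-component case.
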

\begin{proof}
Assume there is another $\Theta'\in \mathcal M_{-K}$ intersecting $\Theta$ at $x$. If they do not share irreducible components passing through $x$, then $x$ would contribute at least $2$ to the intersection number of $\Theta$ and $\Theta'$. However, it contradicts to the local positivity of irreducible $J$-holomorphic curves, since $e_{\Theta}\cdot e_{\Theta'}=(-K)^2=1$.

If $\Theta$ and $\Theta'$ share irreducible components passing through $x$, then they are both in $\mathcal M_{-K}^{red}$. By Proposition \ref{-Kred}, each component is a rational curve. Since $x$ is a singular point, it must be the intersection point of at least two irreducible components of $\Theta$. Then we can apply the same argument of  Lemma \ref{uniquereducible} to get two cohomologous subvarieties $\Theta_0$ and $\Theta_0'$ with no common components. If $x\in \Theta_0$, we have  $1\ge (-K)\cdot e_{\Theta_0}>e_{\Theta_0}^2$. If $(-K)\cdot e_{\Theta_0}=0$, we have $e_{\Theta_0}\cdot e_{\Theta_0'}=e_{\Theta_0}^2<0$, contradicting to the local positivity of intersection. Hence $-K\cdot e_{\Theta_0}=1$, or equivalently $-K\cdot (-K-e_{\Theta_0})=0$. In other words, all the removed components are $-2$ rational curves. However, in this case $(-K-e_{\Theta_0})\cdot e_{\Theta_0}=\sum m_ie_{C_i}(-K-\sum m_ie_{C_i})>0$. Since all $C_i$ are $-2$ rational curves, we have $$\sum m_ie_{C_i}\cdot (-K-\sum m_ie_{C_i})=-(\sum m_ie_{C_i})^2=2\sum m_i^2-2\sum m_im_je_{C_i}\cdot e_{C_j}$$ is an even number. Hence $1= (-K)\cdot e_{\Theta_0}\ge e_{\Theta_0}^2+2$, which implies the impossible relation $e_{\Theta_0}^2< 0$ again.

Hence $\Theta$ is the unique subvariety in class $-K$ passing through the point $x$.
\end{proof}

From now on, we will further assume that there is no irreducible $J$-holomorphic curves with cusp singularity in $\mathcal M_{-K}$. Notice there is a confusion of irreducible cuspidal curves: From our subvariety viewpoint, they are tori, since $g_J=1$; While from the $J$-holomorphic map viewpoint, they are rational curve, since the model curve is a sphere. By \cite{B}, all such almost complex structures form an open dense set. Under our assumption, an irreducible curve in $\mathcal M_{-K}$ has at most nodal singularities.

The main point to prevent the cusps is the unobstructedness result.

\begin{prop}\label{unobstructedtori}
Let $C$ be an irreducible nodal curve in class $-K$. There is a local homeomorphism $(\mathcal M_{-K}, C)\rightarrow (\mathbb C, 0)$.  
\end{prop}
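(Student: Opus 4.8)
\textbf{Proof proposal for Proposition \ref{unobstructedtori}.}

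The plan is to realize a neighborhood of $C$ in $\mathcal M_{-K}$ as the base of the local deformation theory of $C$ as a $J$-holomorphic submanifold, exactly as in \cite{T1} (and as recalled at the end of Section 4 around Equation \eqref{localdef}), and to show that this deformation problem is unobstructed of the expected dimension $1$. Since $(-K)^2 = -K\cdot(-K) $ gives $\iota_{-K} = \frac{1}{2}((-K)^2 - K_J\cdot(-K)) = \frac{1}{2}(1+1)=1$, the expected complex dimension of $\mathcal M_{-K}$ is one, so the claim is that $C$ is an unobstructed point. First I would pass to the normalization $\varphi:\Sigma\to M$ of $C$; since $C$ has at worst nodes and (by our standing assumption) no cusps, $\Sigma$ is a smooth genus-one curve and $\varphi$ is an immersion. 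The relevant linearized operator is the real-linear Cauchy--Riemann operator $D_C$ (the operator in (2.12) of \cite{T1}) acting on sections of the pullback of the normal data; automatic transversality for such operators in dimension four depends only on $c_1$ and the genus. Concretely, with $\varphi^*TM$ fitting in $0\to T\Sigma\to \varphi^*TM\to N\to 0$, one computes $c_1(N)\cdot[\Sigma] = (-K)\cdot(-K) - (2-2g(\Sigma)) = 1 - 0 = 1 \ge 2g(\Sigma)-1 = 1$, so the Hofer--Lizan--Sikorav / McDuff automatic transversality criterion applies and $D_C$ is surjective with kernel of real dimension $2(1 + c_1(N)\cdot[\Sigma] + (1-g)) = 2$; equivalently $\ker D_C$ has complex dimension $1$ and $\mathrm{coker}\,D_C = 0$.

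Given surjectivity of $D_C$, the next step is to quote Taubes' description of $\mathcal M_{-K}$ near $C$: the nonlinear deformation equation is \eqref{localdef}, $D_C\eta + \tau_1\partial\eta + \tau_0 = 0$, with $\tau_1,\tau_0$ the quadratic-type correction terms, and the $L^2$-orthogonal projection to $\ker D_C$ maps the solution set diffeomorphically onto an open ball in $\ker D_C$. Because the cokernel vanishes, the usual implicit function theorem argument (no Kuranishi obstruction to quotient out) shows that the solution set is a smooth manifold of real dimension $\dim_{\mathbb R}\ker D_C = 2$, and the projection to $\ker D_C\cong\mathbb C$ is a homeomorphism onto a neighborhood of $0$. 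This gives the desired local homeomorphism $(\mathcal M_{-K}, C)\to(\mathbb C,0)$. One subtlety is that Taubes' setup in \cite{T1} is phrased for embedded $C$, whereas an irreducible nodal curve is only immersed; here I would either work upstairs on $\Sigma$ and invoke the immersed version of the gluing/deformation analysis (the nodes are transverse self-intersections, which deform independently and are accounted for by the same normal-bundle operator on the normalization), or note that a nodal curve is a limit of embedded curves in a nearby class and use the local model directly on the normalization. Either way the germ of the moduli space at $C$ is cut out by an unobstructed equation.

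The main obstacle, and the step deserving the most care, is precisely the verification that $\mathrm{coker}\,D_C = 0$ for an \emph{immersed} (not embedded) genus-one curve in the class $-K$, i.e. that automatic transversality genuinely applies. The clean numerical input is $c_1(N)\cdot[\Sigma] = 1$ and $g(\Sigma)=1$, and the inequality needed is $c_1(N)\cdot[\Sigma] \ge 2g(\Sigma)-1$, which holds with equality; at equality one must be slightly careful, but the Hofer--Lizan--Sikorav criterion for four-manifolds is exactly $c_1(N)\cdot[\Sigma]\ge 2g-1$ (strict inequality is not required when $g\ge 1$ and the curve is not multiply covered), so surjectivity follows. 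I would also double-check that no component is multiply covered — automatic since $C$ is irreducible and $-K$ is primitive — and that the nodes do not obstruct: in the immersed picture the normal bundle $N$ on $\Sigma$ is a genuine line bundle and the deformation operator is the plain $\bar\partial$-type operator twisted by $N$, so the node count enters only through $c_1(N)$, which we have already used. Once surjectivity is in hand, everything else is the standard translation of \eqref{localdef} into a manifold chart, and the proposition follows.
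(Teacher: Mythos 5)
There is a genuine gap, and it starts with the genus of the normalization. You assert that the normalization $\Sigma$ of an irreducible nodal curve $C$ in class $-K$ is a smooth genus-one curve. It is not: by the adjunction inequality recalled in Section 2.1, the genus of the model curve of a singular irreducible curve is \emph{strictly} less than $g_J(e_C)=g_J(-K)=1$, so $\Sigma$ is a sphere and $C$ has exactly one node (the paper makes this point explicitly when discussing cuspidal curves: from the map viewpoint these are rational curves). Consequently your numerics are wrong: $c_1(N)\cdot[\Sigma]=(-K)^2-(2-2g(\Sigma))=1-2=-1$, not $1$, and the normal operator $D_N$ on the normalization has real index $2(c_1(N)+1-g)=0$, not $2$. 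Automatic transversality still applies ($c_1(N)=-1>2g-2=-2$), but it only tells you that the nodal curve is rigid and isolated among immersed rational curves in class $-K$; deforming $C$ through sections of the normal bundle over $\Sigma$ can never smooth the node, so this deformation theory produces a zero-dimensional family, not the local model $(\mathbb C,0)$ you want. Your parenthetical claim that ``the nodes are transverse self-intersections, which deform independently and are accounted for by the same normal-bundle operator on the normalization'' is exactly where the argument fails: the one-complex-parameter family near $C$ in $\mathcal M_{-K}$ consists of smooth genus-one curves obtained by \emph{smoothing} the node, and that requires a gluing construction, not the unobstructedness of the fixed-domain deformation problem. Likewise, Taubes' local model \eqref{localdef} is stated for smooth embedded curves and does not apply verbatim to the immersed nodal $C$.

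The paper's proof supplies precisely the missing ingredient: it quotes Sikorav's gluing result \cite{Sik}, which says that at an immersed $J$-holomorphic curve with $K\cdot e_C<0$ (here $K\cdot(-K)=-K^2=-1<0$) the moduli space is smooth and locally homeomorphic to $(\mathbb C,0)$, with the nonzero parameters corresponding to smooth curves obtained by smoothing the node; irreducibility of $C$ guarantees the map $\phi$ is determined up to reparametrization, so this neighborhood of $\phi$ is identified with a neighborhood of $C$ in $\mathcal M_{-K}$. To repair your write-up you would need to replace the automatic-transversality-plus-implicit-function-theorem step by this gluing statement (or prove a node-smoothing gluing theorem yourself), and correct the genus and $c_1(N)$ computations accordingly.
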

\begin{proof}
Essentially, it is due to \cite{Sik}, which proves that the moduli space of $J$-holomorphic maps is smooth at an immersed $J$-curve with $K\cdot e_C<0$, and locally the moduli space is $(\mathbb C, 0)$.
In our situation, an irreducible nodal curve is the image of an immersion $\phi$.  As we recalled in the beginning of Section 2.1, if $C$ is irreducible, it determines the map $\phi$ up to automorphisms. Moreover, $e_C=-K$ thus $K\cdot e_C=-1<0$. Hence, the result applies to our case. In particular, the neighborhood of $\phi$, which is identified with our $\mathcal M_{-K}$ locally at $C$, is homeomorphic to $\mathbb C$ with all elements in $\mathbb C\setminus \{0\}$ smooth $J$-holomorphic maps. Hence the conclusion holds.
\end{proof}

Besides irreducible nodal curves, the remaining elements in $\mathcal M_{-K}$ are reducible varieties whose irreducible components are all smooth rational curves. These rational curves are of self-intersection $-1$ or $-2$. There are only finitely many such curves, see {\it e.g.} Proposition 4.4 of \cite{p=h}. Hence $\mathcal M_{red, -K}$ is a set of isolated points in $\mathcal M_{-K}$. However, we do not know if it is locally Euclidean. Nonetheless, we have  the path connectedness of $\mathcal M_{-K}$.

\begin{prop}\label{MKconn}
Suppose there is a singular subvariety in class $-K$. Then the moduli space $\mathcal M_{-K}$ is path connected when there is no irreducible $J$-holomorphic cuspidal curve in class $-K$.
\end{prop}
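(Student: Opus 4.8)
The plan is to prove path-connectedness of $\mathcal M_{-K}$ by exhibiting a canonical way to join any point of $\mathcal M_{-K}$ to a fixed irreducible nodal curve, using the existence of a singular subvariety together with the two uniqueness tools already established: Lemma \ref{uniquesingular} (uniqueness of a subvariety through a singular point) and Proposition \ref{unobstructedtori} (local model $(\mathbb C,0)$ at an irreducible nodal curve). First I would fix one irreducible nodal curve $C_0\in\mathcal M_{-K}$. This is where the hypothesis is used: we are given a singular subvariety in class $-K$, and if it is reducible we can use Proposition \ref{-Kred} to find among its components and their intersection patterns a nearby irreducible member, or more directly invoke that for generic tamed $J$ the class $-K$ (with $\dim_{SW}(-K)=1$) is represented by a smooth torus, and a wall-crossing/SW nonvanishing argument plus the exclusion of cusps guarantees an irreducible nodal (or smooth) representative exists; in any case the hypothesis ``there is a singular subvariety'' combined with ``no cuspidal curve'' pins down the existence of an irreducible nodal $C_0$.

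The core of the argument is then a ``flow to $C_0$'' construction analogous to the proof of Theorem \ref{M_T} and Proposition \ref{connected}. Pick a point $p_0$ lying on $C_0$ but off all the finitely many negative rational curves and off the node; by Lemma \ref{uniquesingular} together with the fact that $(-K)^2=1$ and positivity of intersections, through a generic point of $M$ there passes a \emph{unique} element of $\mathcal M_{-K}$ (if two elements shared a point, either they meet with excess intersection, contradicting $(-K)^2=1$, or they share a component, which by the Lemma \ref{uniquereducible}-type argument in Lemma \ref{uniquesingular} forces them equal). This gives a well-defined map $f:M'\to\mathcal M_{-K}$, $x\mapsto\Theta_x$, on the open dense set $M'$ of points not lying on any negative rational curve, and $f$ is continuous by the Gromov-Hausdorff topology argument used in Corollary \ref{fibration}: if $x_n\to x$ then any Gromov-Hausdorff limit of $\Theta_{x_n}$ contains $x$, hence equals $\Theta_x$ by uniqueness. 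Since $M'$ is path connected (its complement is a finite union of real-codimension-$2$ submanifolds), $f(M')$ is path connected inside $\mathcal M_{-K}$, and it contains $C_0$ as well as every irreducible curve in $\mathcal M_{-K}$ (each such $C$ is hit, since $C$ has points in $M'$). So the irreducible locus, together with $C_0$, lies in one path component.

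It then remains to connect each reducible $\Theta\in\mathcal M_{red,-K}$ to this path component. By Proposition \ref{-Kred}, $\Theta=\{(C_i,m_i)\}$ is connected with all components smooth rational of negative self-intersection, and its support contains a singular point $x$ (the intersection of two components); by Lemma \ref{uniquesingular}, $\Theta$ is the unique element of $\mathcal M_{-K}$ through $x$. Now choose a smooth point $y$ of $\Theta$ in $M'$ and a short path $\gamma$ in $M$ from $y$ to a nearby point, staying in $M'$ except possibly at the endpoint $y$; pulling back by $f$ gives a path in $\mathcal M_{-K}$ starting at a point near $\Theta$. The honest issue is getting the path to actually start \emph{at} $\Theta$ rather than merely limit to it: here I would use that $f$ extends continuously across $y$ with value $\Theta$ (again by the uniqueness-at-a-point argument applied at $y$, together with continuity of $f$ on $M'$ and the compactness of $\mathcal M_{-K}$, exactly as in the proof of Corollary \ref{fibration}), so a path $\gamma(t)$ with $\gamma(0)=y$, $\gamma((0,1])\subset M'$ yields a path $f\circ\gamma$ in $\mathcal M_{-K}$ with $f(\gamma(0))=\Theta$ and $f(\gamma((0,1]))$ inside $f(M')$. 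Since there are only finitely many reducible subvarieties (the finitely many negative rational curves give only finitely many configurations in class $-K$), applying this to each reducible $\Theta$ shows all of $\mathcal M_{-K}$ is in one path component.

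The step I expect to be the main obstacle is precisely the continuity/extension of $f$ across points lying on the negative rational curves and, relatedly, at the reducible subvarieties: one must rule out that a reducible $\Theta$ is a ``jump'' that cannot be approached by irreducible curves within $\mathcal M_{-K}$. The resolution should be that at a singular point $x$ of $\Theta$ there is, by Lemma \ref{uniquesingular}, genuinely only one subvariety through $x$, and nearby generic points carry the $f$-image of a genuinely varying family; combined with Proposition \ref{unobstructedtori} near the irreducible locus this forces $f$ to be continuous at such $x$ with value $\Theta$. Making this rigorous requires a careful local Gromov-compactness argument — controlling how a sequence $\Theta_{x_n}$ of (possibly reducible) subvarieties through $x_n\to x$ degenerates — which is the delicate point; everything else is a repackaging of the moduli-space topology arguments already used for $\mathcal M_T$.
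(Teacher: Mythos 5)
Your overall strategy (paths in $M$, Lemma \ref{uniquesingular} at singular points, Proposition \ref{unobstructedtori} for the local structure, Gromov compactness) is close in spirit to the paper's, but it rests on a step that fails: the claim that through a generic point of $M$ there passes a \emph{unique} element of $\mathcal M_{-K}$. Your justification is that two distinct elements sharing a point would ``meet with excess intersection, contradicting $(-K)^2=1$,'' but a single transverse intersection point contributes exactly $1$, which is precisely consistent with $(-K)^2=1$; nothing prevents two distinct subvarieties in class $-K$ (with no common component) from meeting transversally at one point. Indeed, in the integrable case the anticanonical curves form a pencil with a base point through which \emph{every} member passes, and for a general tamed $J$ no linear-system structure is available to force uniqueness even at generic points — the paper states explicitly that the subvariety through a given point ``might not be unique.'' The only uniqueness available is Lemma \ref{uniquesingular}, which requires the point to be a \emph{singular} point of the subvariety. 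Consequently your map $f\colon M'\to\mathcal M_{-K}$ is not well defined, and the continuity and path-lifting arguments built on it (the ``flow to $C_0$'' and the extension of $f$ across $y$) do not go through as stated.

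The paper circumvents exactly this issue by never defining a global map: it fixes a smooth curve $C$, a singular subvariety $\Theta$ with a singular point $y$, and a path $P(t)$ in $M$ from a point of $C$ to $y$ avoiding the supports of the finitely many singular subvarieties for $t<1$; it then runs a continuity method on the set $T$ of times $t$ for which \emph{some} curve through $P(t)$ is joined to $C$ by a path in $\mathcal M_{-K}$. Openness comes from unobstructedness (Proposition \ref{unobstructedtori} and the smooth case, since $K\cdot(-K)<0$), closedness from Gromov compactness (the limit through $P(t)$, $t<1$, is forced to be smooth by the choice of $P$), and uniqueness is invoked only at the endpoint $t=1$, where Lemma \ref{uniquesingular} identifies the limit with $\Theta$. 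A second, smaller point: your proposal works to produce an irreducible nodal curve $C_0$, but the hypothesis of this proposition is only that some singular subvariety (possibly reducible) exists; the irreducible nodal curve is needed only later, in Theorem \ref{smoothconn}, so that detour is unnecessary here and, as you present it (``a nearby irreducible member,'' genericity of $J$), not actually justified for the fixed tamed $J$ at hand.
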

\begin{proof}
The Gromov compactness implies the moduli space $\mathcal M_{-K}$ is compact. In particular, there are finitely many irreducible nodal curves by Proposition \ref{unobstructedtori}, and finitely many reducible subvarieties by the discussion in the last paragraph. Since $l_{-K}=1$, by Seiberg-Witten theory, there is an $J$-holomorphic subvariety passing through any given point. However, such a subvariety might not be unique. 

We will show that a smooth subvariety in $\mathcal M_{-K}$ is connected to any singular subvarieties in $\mathcal M_{-K}$ by paths. Since there are finitely many singular subvarieties, the complement $M'$ of their support in $M$ is path-connected. For a smooth subvariety $C$, choose a point $x\in C$ which is also in $M'$. Then for any singular subvariety $\Theta$, choose a singular point $y\in |\Theta|$. Take a path $P(t)$ such that  $P(0)=x$, $P(1)=y$ and $P[0, 1)\subset M'$. We look at the set $$T=\{t\in [0, 1]| C(t)\hbox{ passes through }P(t),\hbox{ and is connected to }C \hbox{ by path}\},$$ where $C(t)\in \mathcal M_{-K}$ and $C(0)=C$.
Because of our choice of $P(t)$, each $C(t)$, $t\in [0, 1)$, is a smooth curve. By Lemma \ref{uniquesingular}, $C(1)=\Theta$ since $y$ is a singular point of $\Theta$. 

 The set $T$ is non-empty because $0\in T$, open because Proposition \ref{unobstructedtori}, and closed because of Gromov compactness. Hence $T=[0, 1]$ and we have constructed a path $C(t)\in \mathcal M_{-K}$ from a smooth subvariety $C$ to a singular subvariety $\Theta$. Hence $\mathcal M_{-K}$ is path connected. 
\end{proof}

The following is Theorem \ref{introtori}.

\begin{theorem}\label{smoothconn}
If there is an irreducible (singular) nodal curve in $\mathcal M_{-K}$, then $\mathcal M_{smooth, -K}$ and $\mathcal M_{-K}$ are both path connected.
\end{theorem}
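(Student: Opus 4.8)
The plan is to get path-connectedness of $\mathcal M_{-K}$ for free: an irreducible nodal curve is in particular a singular subvariety in class $-K$, and since we are working under the standing assumption that there is no irreducible $J$-holomorphic cuspidal curve in class $-K$, Proposition \ref{MKconn} applies verbatim and gives that $\mathcal M_{-K}$ is path connected. So the real content is the statement about $\mathcal M_{smooth,-K}$, and this is where the given nodal curve $C^\sharp$ (with node $y$) is genuinely used. The first step is to record the local picture at $C^\sharp$: by Proposition \ref{unobstructedtori} there is a neighbourhood $\mathcal U$ of $C^\sharp$ in $\mathcal M_{-K}$ and a homeomorphism $\mathcal U\cong \mathbb C$ sending $C^\sharp$ to $0$ and $\mathcal U\setminus\{C^\sharp\}$ onto $\mathbb C\setminus\{0\}$, with every element of $\mathcal U\setminus\{C^\sharp\}$ a smooth curve. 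Hence $\mathcal U\cap \mathcal M_{smooth,-K}=\mathcal U\setminus\{C^\sharp\}$ is homeomorphic to a punctured disk, so it is non-empty and path connected. (In passing, this already shows $\mathcal M_{smooth,-K}\neq\emptyset$.)

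The second step is to connect an arbitrary $C\in\mathcal M_{smooth,-K}$ to this punctured disk through smooth curves only. For this I re-run the construction in the proof of Proposition \ref{MKconn}, but choosing the target singular subvariety to be $C^\sharp$ itself and the singular point to be the node $y$. Since Lemma \ref{uniquesingular} applies to any element of $\mathcal M_{-K}$ possessing a singular point, the argument goes through without change: picking a point $x\in C$ outside the supports of the finitely many singular subvarieties, and a path $P$ in $M$ from $x$ to $y$ whose interior avoids those supports, one obtains a path $\gamma\colon[0,1]\to\mathcal M_{-K}$ with $\gamma(0)=C$, $\gamma(1)=C^\sharp$ (by Lemma \ref{uniquesingular}, since $y$ is singular in $C^\sharp$), and $\gamma(t)\in\mathcal M_{smooth,-K}$ for every $t<1$, the interior staying smooth because $\gamma(t)$ is forced to pass through $P(t)$, which lies off the support of every singular subvariety. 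By continuity, $\gamma(t)\in\mathcal U\setminus\{C^\sharp\}=\mathcal U\cap\mathcal M_{smooth,-K}$ for $t$ close to $1$, since $\gamma(t)\to C^\sharp$ and $\gamma(t)\neq C^\sharp$ there.

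Combining the two steps: every point of $\mathcal M_{smooth,-K}$ is joined, by a path lying entirely in $\mathcal M_{smooth,-K}$, to a point of the non-empty path-connected subset $\mathcal U\cap\mathcal M_{smooth,-K}$; this formal fact forces $\mathcal M_{smooth,-K}$ to be path connected. I expect the only delicate point to be precisely the transition through $C^\sharp$: one cannot simply concatenate a path $C\to C^\sharp$ with a path $C^\sharp\to C'$, because $C^\sharp\notin\mathcal M_{smooth,-K}$, and the role of the local model of Proposition \ref{unobstructedtori} is exactly to let one route around the puncture inside $\mathcal M_{smooth,-K}$. Everything else is an appeal to Propositions \ref{MKconn} and \ref{unobstructedtori} and Lemma \ref{uniquesingular}, together with the standing assumptions that $-K$ is $J$-nef and carries no irreducible cuspidal curve.
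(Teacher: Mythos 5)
Your proposal is correct and follows essentially the same route as the paper: path-connectedness of $\mathcal M_{-K}$ comes straight from Proposition \ref{MKconn}, and for $\mathcal M_{smooth,-K}$ you run the path construction of Proposition \ref{MKconn} toward the nodal curve (with interiors staying smooth) and then use the local model of Proposition \ref{unobstructedtori} to route around the puncture at the nodal curve. The only cosmetic difference is that you connect each smooth curve to the fixed punctured-disk neighbourhood and invoke its path-connectedness, whereas the paper joins two given smooth curves near the common nodal limit; the content is the same.
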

\begin{proof}
The path connectedness of the moduli space $\mathcal M_{-K}$ follows directly from Proposition \ref{MKconn}.

The space $\mathcal M_{smooth, -K}$ is the subset of $\mathcal M_{-K}$ where all the elements are smooth curves. By Proposition \ref{MKconn}, for any smooth curves $C, C'\in \mathcal M_{smooth, -K}$, we have paths $C(t), C'(t)\subset \mathcal M_{-K}$ with $C(0)=C, C(0)=C', C(1)=C'(1)=C_{nd}$ and $C([0, 1)), C'([0, 1))\subset \mathcal M_{smooth, -K}$. Here, $C_{nd}$ is an irreducible singular nodal curve. By Proposition \ref{unobstructedtori}, there is a locally Euclidean  neighborhood $U$ of $C(1)=C'(1)$ in $\mathcal M_{-K}$. Hence for $t$ close to $1$, $C(t), C'(t)\in U$. Since $(U, \{C(1)\})$ is homeomorphic to $(\mathbb R^2, 0)$, $U\setminus \{C(1)\}$ is homeomorphic to $\mathbb R^2\setminus \{0\}$ and $C(t)$ and $C'(t)$ are connected by a path in it.  Hence $C$ and $C'$ are connected by a path in $\mathcal M_{smooth, -K}$.
\end{proof}

Our method does not apply to the case when $\mathcal M_{smooth, -K}=\mathcal M_{-K}$, {\it i.e.} when all subvarieties in class $-K$ are smooth. However, we believe it cannot happen.

\begin{question}\label{1sing}
For any tamed $J$, do we always have a singular $J$-holomorphic subvariety in class $-K$?
\end{question}

In the integrable case, this is apparently correct. Moreover, the total number of singular points for the curves in class $-K$ is $12$ by a general Euler characteristic argument. Namely, all the curves in class $-K$ form a pencil. After blowing up at the common intersection, we have the universal curve $\mathcal C\rightarrow \mathcal M_{-K}=S^2$ which is diffeomorphic to $M\#\overline{\mathbb CP^2}=\mathbb CP^2\#9\overline{\mathbb CP^2}$. The Euler number $$e(\mathcal C)=e(S^2)\cdot e(T^2)+\# \hbox{ singular points}=\# \hbox{ singular points}$$ and $e(\mathbb CP^2\#9\overline{\mathbb CP^2})=12$. Hence we have $12$ singular points in total for the curves in class $-K$.

\section{More applications}
In this section, we give several applications on spaces of tamed almost complex structures and symplectic isotopy. In particular, we exhibit the exotic behaviour of subvarieties of rational surfaces in a sphere class. In particular, Example \ref{connectedgenus3} gives a connected subvariety with a genus $3$ component in an exceptional curve class. Moreover, the graph corresponding to the subvariety has a loop.
\subsection{Spaces of tamed almost complex structures}
It is known that the space $\mathcal J^{\omega}$ of $\omega$-tamed almost complex structures is contractible, thus path connected. 
Next we define $\mathcal J_{e-nef}\subset \mathcal J^{\omega}$.
\begin{definition}\label{enef}
Suppose $e\in H^2(M, \mathbb Z)$. An $\omega$-tamed $J$ is in $\mathcal J_{e-nef}$ if $e$ is $J$-nef.
\end{definition}

This subspace is also path connected.
\begin{lemma}\label{enefconnected}
If $e$ is represented by a smooth $\omega$-symplectic sphere with non-negative self-intersection, 
then $\mathcal J_{e-nef}$ is path connected.
\end{lemma}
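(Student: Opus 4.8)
The plan is to identify $\mathcal J_{e-nef}$ with the set of $\omega$-tamed $J$ for which $e$ is represented by a smooth $J$-holomorphic sphere, and then to connect any two of its members by transporting such spheres along a symplectic isotopy.

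\textbf{Step 1 (a characterization).} I would first prove that $J\in\mathcal J_{e-nef}$ if and only if $e$ has a smooth $J$-holomorphic sphere representative. Note that $e$ is a $K_J$-spherical class with $e\cdot e\ge 0$: it has a smoothly embedded representative by hypothesis, and choosing a tamed $J_0$ for which the given symplectic sphere is holomorphic, the adjunction formula gives $g_\omega(e)=g_{J_0}(e)=0$. For the ``if'' direction, let $S$ be a smooth (hence irreducible) $J$-holomorphic sphere with $[S]=e$ and let $\Theta=\{(C_i,m_i)\}$ be any $J$-holomorphic subvariety; for each $i$, either $C_i=S$, so $e\cdot e_{C_i}=e\cdot e\ge 0$, or $C_i\ne S$, so positivity of intersections of distinct irreducible $J$-holomorphic curves gives $e\cdot e_{C_i}\ge 0$; hence $e\cdot e_\Theta\ge 0$ and $e$ is $J$-nef. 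The ``only if'' direction is Theorem \ref{unobstructed}: for $J$-nef $e$ with $e\cdot e\ge 0$ the moduli space $\mathcal M_{irr,e}$ is non-empty, and an irreducible representative of a class of $J$-genus $0$ is automatically a smooth rational curve.

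\textbf{Step 2 (moving along an isotopy).} Given $J, J'\in\mathcal J_{e-nef}$, Step 1 supplies smooth holomorphic spheres $S, S'$ in class $e$. By the symplectic isotopy theorem for spheres of non-negative self-intersection (\cite{LW}; see also Theorem \ref{sphereiso}), together with the fact that for the auxiliary tamed $J_0$ above the $J_0$-holomorphic representatives of $e$ form the path connected space $\mathcal M_{irr,e}(J_0)$ (Proposition \ref{connected}), any two symplectically embedded spheres in class $e$ are symplectically isotopic; so there is a path $S_t$ of such spheres with $S_0=S$, $S_1=S'$. Now I would lift this to a path in $\mathcal J^\omega$: the $\omega$-tamed $J$ for which a fixed symplectic submanifold is holomorphic form a non-empty contractible set, and this set varies continuously with $S_t$, so there is a continuous path $t\mapsto J_t$ with $S_t$ being $J_t$-holomorphic; adjusting inside the connected fibres over $t=0,1$ we may take $J_0=J$, $J_1=J'$. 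Each $S_t$ is a smooth $J_t$-holomorphic sphere in class $e$, so $J_t\in\mathcal J_{e-nef}$ by Step 1, and $t\mapsto J_t$ is the desired path.

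\textbf{Main obstacle.} The essential input is the symplectic isotopy statement for spheres with $e\cdot e\ge 0$ — this is precisely where the hypothesis of the lemma enters and where all the depth lies; the continuity of the lift $t\mapsto J_t$ and the positivity characterization in Step 1 are routine. If one wanted a route independent of that isotopy theorem, one could instead observe that $\mathcal J_{e-nef}$ is \emph{open} — a smooth $J$-holomorphic sphere of self-intersection $e\cdot e\ge 0>-1$ is unobstructed by automatic transversality and so persists under small deformations of $J$ — and then try to show that along a path in the contractible space $\mathcal J^\omega$ the property of $e$ being nef is preserved; the crux there is closedness, i.e. ruling out that a Gromov limit of the smooth $J_t$-spheres acquires an irreducible component pairing negatively with $e$, which does not seem to follow from Gromov compactness alone.
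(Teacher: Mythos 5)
Your Step 1 is sound, and it is essentially the same observation the paper makes (positivity of intersections gives the ``if'' direction, Theorem \ref{unobstructed} the ``only if''). The difficulty is the essential input of Step 2. Inside this paper you cannot invoke Theorem \ref{sphereiso}: its proof begins by applying Lemma \ref{enefconnected} to produce the path $\{J_t\}\subset\mathcal J_{e-nef}$, so that reference is circular. If you lean on \cite{LW} instead, you need the statement in exactly the form you use: two $\omega$-symplectic spheres in class $e$ are isotopic through spheres symplectic for the \emph{same} $\omega$. Note that the paper's own notion of symplectic isotopy (Section 6.2) allows a deformation $\omega_t$ of the form; along such an isotopy your lifted almost complex structures would only be $\omega_t$-tamed, hence would leave $\mathcal J^{\omega}$, and the conclusion about $\mathcal J_{e-nef}\subset\mathcal J^{\omega}$ would not follow. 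Even granting the external statement in the correct fixed-$\omega$ form, its known proofs are themselves generic-path transversality arguments of precisely the kind needed here, so your route derives the lemma from a theorem at least as deep — and one whose proof in this paper is deliberately arranged to come \emph{after} the lemma.

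The paper's proof is the direct version of the alternative you sketch at the end and then set aside. It observes $\mathcal J_{reg}\subset\mathcal J_{e-nef}$ (your Step 1 ``if'' direction applied to the embedded spheres supplied by regularity) and then runs the argument of Theorem 3.1.5 of \cite{MS}: the universal moduli space of curves whose presence violates $J$-nefness of $e$ projects to $\mathcal J^{\omega}$ by a Fredholm map of index at most $-2$, so a path in the contractible space $\mathcal J^{\omega}$ joining two elements of $\mathcal J_{e-nef}$ can be perturbed rel endpoints to miss that image, i.e.\ to stay in $\mathcal J_{e-nef}$. This codimension-two estimate is exactly the ``closedness along the path'' point that you correctly say does not follow from Gromov compactness alone; it is the missing idea in your outline. (Your lifting of a family of symplectic spheres to a family of tamed $J$'s is not the problem: it can be made precise by extending the sphere isotopy to an ambient symplectic isotopy and pushing $J$ forward, then adjusting inside the contractible fibres at the endpoints.)
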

\begin{proof}
The assumption tells us $\mathcal J_{e-nef}\neq\emptyset$ and the ambient manifold is rational or ruled. Then the conclusion basically follows from a well-known argument \cite{MS, Shev}. It is known that $\mathcal J_{reg}$ is path connected by the argument of Theorem 3.1.5 of \cite{MS}. Recall that the subset $\mathcal J_{reg}\subset \mathcal J^{\omega}$ is defined as the set of almost complex structures such that all $J$-holomorphic maps $\phi: \Sigma\rightarrow M$ are regular (or the Fredholm operator $D_{\phi}$ is onto). When $S$ is a smooth $J$-holomorphic sphere, $J$ is regular with respect to the class $e=[S]$ if and only if $e\cdot e\ge -1$. And we know that for $J\in \mathcal J_{reg}$, $\mathcal M_{irr, e, J}\neq \emptyset$ (see e.g. \cite{MS, T1}). Hence $e$ is $J$-nef.
This implies $\mathcal J_{reg}\subset \mathcal J_{e-nef}$. 
This observation ensures us to apply the same argument of Theorem 3.1.5 in \cite{MS}. Namely, the projection of $\cup \mathcal M_{e-non-nef, J}\rightarrow \mathcal J^{\omega}$ is Fredholm with index at most $-2$. 
\end{proof}

When $J$ is tamed, there are some occasions that $\mathcal J_{e-nef}=\mathcal J^{\omega}$. For example, as we have shown in Proposition \ref{smoothT}, $e=T$ in an irrational ruled surface is such a class. Another such example is $e=H-E$ in $\mathbb CP^2\#\overline{\mathbb CP^2}$. In general, $\mathcal J_{e-nef}\ne \mathcal J^{\omega}$ however. For example, when $e=11H-4E_1-\cdots-4E_7-3E_8$, any $J$ on $M=\mathbb CP^2\#8\overline{\mathbb CP^2}$ such that there are smooth curves in classes $3H-2E_1-E_2-\cdots -E_8$ and $3H-E_1-\cdots -E_8$ will do. For such a $J$, the sphere classes (11)-(15) in the list of Proposition 4.7 of \cite{p=h}, {\it e.g.} $e=11H-4E_1-\cdots-4E_7-3E_8$, are not $J$-nef.
In fact, this example works more generally.

\begin{prop}\label{T2comp}
For any $K$-spherical class $e$ 
of $\mathbb CP^2\#k\overline{\mathbb CP^2}, k\ge 8$ and $e\cdot e\ge -1$, there is an integrable complex structure $J$ such that there are subvarieties in $\mathcal M_e$ with an elliptic curve irreducible component.
\end{prop}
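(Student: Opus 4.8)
The plan is to exploit the Cremona equivalence of $K$-spherical classes in $\mathbb{CP}^2\#k\overline{\mathbb{CP}^2}$ together with the fact that the example from \cite{p=h} (quoted in the introduction) exhibits an exceptional curve class with an elliptic curve component. First I would recall that every $K$-spherical class $e$ with $e\cdot e\ge -1$ in $\mathbb{CP}^2\#k\overline{\mathbb{CP}^2}$ is, up to the action of the Cremona group, one of a short list of standard classes; in particular, after a sequence of Cremona transformations (realized geometrically by birational maps of blowups of $\mathbb{CP}^2$, or by a diffeomorphism permuting the $E_i$ and applying the standard quadratic transformation), $e$ is equivalent either to an exceptional class $E$, or to $H$, $2H$, or more generally to the class of a smooth rational curve of self-intersection $e\cdot e$. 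Since Cremona equivalence is realized by a diffeomorphism of $M$ preserving $K$ up to sign, it suffices to produce, for one representative in each Cremona orbit, a complex structure $J$ on $M$ and a subvariety in $\mathcal M_e$ with an elliptic curve component, and then transport $J$ by the diffeomorphism.

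The core construction for the base case is the one sketched in the paragraph before Proposition \ref{T2comp}: take the sphere classes (11)--(15) from the list in Proposition 4.7 of \cite{p=h}, for instance $e=11H-4E_1-\cdots-4E_7-3E_8$, and choose a complex structure $J$ on $\mathbb{CP}^2\#8\overline{\mathbb{CP}^2}$ admitting smooth irreducible curves in the classes $A=3H-2E_1-E_2-\cdots-E_8$ (a $-2$-curve of genus $0$ — wait, one checks $A^2=-1$, $K\cdot A=-1$, so it is an exceptional class; more importantly one also wants) and $B=3H-E_1-\cdots-E_8=-K$, which is an anticanonical elliptic curve. One then checks by a direct intersection computation that $e$ is a non-negative integral combination $e=\sum a_i e_{C_i}$ where exactly one $C_i$ is the elliptic curve in class $-K$ and the rest are rational $(-1)$- or $(-2)$-curves lying in the configuration supported by such a $J$ (the existence of $J$ with all these curves holomorphic follows because the required configuration is realized on an actual rational surface, e.g. a suitable blowup of $\mathbb{CP}^2$ at $8$ points, possibly infinitely near, lying on a smooth cubic). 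Since all the classes involved are effective for this $J$ and their sum with multiplicities is $e$, the union $\Theta$ of these curves with the prescribed multiplicities is a $J$-holomorphic subvariety in class $e$ with an elliptic curve component. For $k>8$, blow up further away from the configuration (adding disjoint $(-1)$-spheres), which does not affect the argument; the class $e$ in $\mathbb{CP}^2\#k\overline{\mathbb{CP}^2}$ with the extra $E_i$ appearing with coefficient zero is handled identically, and a genuinely $k$-dependent $K$-spherical class is again Cremona-reduced to the $k=8$ situation or to a class involving the new exceptional curves trivially.

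The main obstacle will be the bookkeeping in the reduction step: verifying that every $K$-spherical class $e$ with $e\cdot e\ge -1$ in $\mathbb{CP}^2\#k\overline{\mathbb{CP}^2}$, $k\ge 8$, really does fall — under Cremona equivalence combined with adding disjoint exceptional spheres — into the orbit of one of the finitely many classes for which the explicit configuration above works. This requires knowing the structure of $K$-spherical classes under the Weyl group action $W(E_k)$ (equivalently, that the root lattice $K^\perp$ governs the orbits), and checking that the ``bad'' representatives $e$ of large $e\cdot e$ that do not obviously contain an anticanonical summand can nonetheless be written as such a combination for a suitably chosen $J$ — this is precisely where the freedom to pick $J$ making a whole configuration of negative curves plus an anticanonical elliptic curve holomorphic is used. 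A secondary technical point is to confirm that the resulting $\Theta$ genuinely has the elliptic curve as an \emph{irreducible component} (not merely in its homology decomposition); this is immediate once $J$ is chosen so that the anticanonical curve is a smooth irreducible elliptic curve, which can always be arranged on a rational surface with $-K$ nef and $(-K)^2\ge 1$, i.e. for $k\le 8$, and for $k\ge 9$ by taking the $8$-point configuration and adding base points.

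Once the statement is established, the remark that follows — no sphere class in $\mathbb{CP}^2\#k\overline{\mathbb{CP}^2}$, $k\ge 8$, is $J$-nef for every complex structure $J$ — is immediate, since the elliptic component $C_1$ with $g_J(e_{C_1})=1$ violates the genus bound \eqref{genus bound} that $J$-nefness would force.
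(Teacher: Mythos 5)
Your overall strategy (Cremona symmetry plus a special complex structure carrying an anticanonical elliptic curve) is the right flavor, but the step you yourself flag as the ``main obstacle'' is not a bookkeeping issue -- it is the actual content of the proof, and your proposal contains no mechanism for it. There are infinitely many Cremona orbits of $K$-spherical classes with $e\cdot e\ge 0$ (the normal forms $H$, $2H$, $H-E_1$, $(n+1)H-nE_1$, $(n+1)H-nE_1-E_2$ for all $n$), so an explicit finite list of configurations such as the classes (11)--(15) of \cite{p=h} cannot serve as base cases for every orbit. Worse, reducing the \emph{class} to its normal form and then seeking a decomposition with an elliptic summand cannot work for the small normal forms: for any complex structure in which your proposed configuration is written in the standard basis (i.e.\ any iterated blowup of $\mathbb{CP}^2$ with $H$ the line class), every irreducible curve has non-negative degree, an irreducible genus-one curve has degree at least $3$, and the exceptional locus is a tree of rational curves; hence no subvariety in class $H$, $2H$ or $H-E_1$ can have an elliptic component for such a $J$. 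So the reduction must be organized the other way around, and for the large-square orbits you still need an effectivity statement for ``$e$ minus an elliptic class'' that your sketch never supplies.

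The paper supplies exactly these two missing ingredients. First, it fixes one concrete $J_0$ on $\mathbb{CP}^2\#8\overline{\mathbb{CP}^2}$, obtained by blowing down a section of the elliptic fibration with an $I_5$ fiber from \cite{AZ}, for which the exceptional class $E=3H-E_2-E_3-2E_4-E_5-\cdots-E_8$ has the reducible representative $\{(E_1-E_4,1),(F,1)\}$ with $F$ a smooth fiber in class $-K$. Second, for \emph{any} sphere class $S$ with $S^2\ge 0$ and $S\cdot E=0$ it produces the rest of the subvariety by Seiberg--Witten theory rather than by hand: adjunction gives $g_{J_0}(S-E)=0$, $(S-E)^2=S^2-1\ge -1$ and $\dim_{SW}(S-E)=2S^2\ge 0$, so $SW(S-E)\ne 0$ and $S-E$ is $J_0$-effective; appending $\{(E_1-E_4,1),(F,1)\}$ to a subvariety $\Theta_1$ in class $S-E$ gives the desired element of $\mathcal M_S$, uniformly in $S$. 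The Cremona group then enters only through the orthogonality trick: by the \cite{LBL} normal forms, every $K$-spherical class $e$ of non-negative square is orthogonal to some exceptional class $E'$, and transitivity of $W(E_8)$ on exceptional classes (realized by a diffeomorphism preserving $K$) moves $E'$ to $E$, so one pulls $J_0$ back; the case $e\cdot e=-1$ uses the same transitivity directly, and $k\ge 9$ is handled by blowing down an orthogonal exceptional class and inducting. Without the wall-crossing/SW nonvanishing step your argument cannot produce the effective class $f^*(e)-E$ for arbitrary $e$, and without the orthogonal-exceptional-class reduction your base cases do not exist for the normal forms, so as written the proposal has a genuine gap.
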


\begin{proof}
We start with $k=8$.

First, we discuss a special $-1$ curve class. In the first construction of section 4.2 in \cite{AZ}, we constructed an elliptic $E(1)$ with a $I_5$ fiber and seven $I_1$ fibers. The homology classes of the components of $I_5$ are $H-E_1-E_6-E_7, H-E_2-E_8-E_9, H-E_1-E_2-E_3, E_2-E_5$ and $E_1-E_4$. And there are $7$ disjoint $-1$ sections in classes $E_3, \cdots, E_9$. Since $(H-E_2-E_8-E_9)\cdot E_9=1$, after blowing down $E_9$, we have a complex structure $J_0$ on $\mathbb CP^2\#8\overline{\mathbb CP^2}$ such that there are subvarieties $\{(E_1-E_4, 1), (F, 1)\}\in \mathcal M_E$ with $e_F=-K$ and the $-1$ class $E=3H-E_2-E_3-2E_4-E_5-\cdots-E_8$, where $F$ are induced from smooth fibers of the above elliptic fibration. Notice our $-1$ class $E$ is Cremona equivalent to $E_1$, hence sphere representable.

Then we claim that for any non-negative $K$-spherical class $S$ with $S\cdot E=0$, $\mathcal M_S$ also contains an element with elliptic curve irreducible component.  By adjunction $g_{J_0}(S-E)=0$. We also have $(S-E)^2\ge -1$, $\dim_{SW}(S-E)\ge 0$ and $\dim_{SW}(S)>0$. Hence $SW(S-E)\ne 0$, see {\it e.g.} Proposition 2.3 of \cite{p=h}, and there is a $J_0$-holomorphic subvariety $\Theta_1$ in class $S-E$.  However, $$\Theta=\{(E_1-E_4, 1), (3H-E_1-\cdots -E_8, 1), \Theta_1\}\in \mathcal M_{S},$$ and $g_J(3H-E_1-\cdots -E_8)=1$. 

Now we discuss a general sphere class. 
It is a classical result that the $-2$-rational curve classes (with respect to a fixed canonical class $K$, say $-3H+E_1+\cdots+E_8$) correspond to the root system of the exceptional Lie algebra $E_8$ (this should not be confused with the exceptional class $E_8$). In particular, there are $240$ such classes. The Weyl group $W(E_8)$ is the group of permutations of the roots generated by the reflections in the roots. By Corollary 26.7 of \cite{Man}, $W(E_8)$ acts transitively on the collection of the $-1$-rational curve classes (with respect to a fixed canonical class $K$). Since the canonical class is fixed, the Weyl group action could be realized by a Cremona transformation \cite{LL}, {\it i.e.} a diffeomorphism preserving the canonical class. Hence, for a $-1$ rational curve class $E'$ which is related to $E$ by a diffeomorphism $f$, we know $\mathcal M_{E'}$ would also contain an element with an elliptic curve irreducible component with respect to the complex structure $f^{-1}(J_0)$.

Any $K$-spherical class $e$ of non-negative square is Cremona equivalent to one of the following classes \cite{LBL}

\begin{itemize} 
\item $H-E_1$,
\item $2H$,
\item $H$,
\item $(n+1)H-nE_1$, $n\ge 1$,
\item $(n+1)H-nE_1-E_2$, $n\ge 1$. 
\end{itemize}

In particular, the class $E_8$ is perpendicular to all the above classes. Hence, for any $K$-spherical class $e$, there is an exceptional curve class $E'$ such that $e\cdot E'=0$. 
By the above discussion, there is a Cremona transformation $f$ which transforms $E'$ to an exceptional curve class $E=3H-E_2-E_3-2E_4-E_5-\cdots-E_8$. Hence $f^*(e)\cdot E=0$. Hence, for the tamed almost complex structure $J=f^{-1}(J_0)$, we know $\mathcal M_e$ 
also has the desired property by the above discussion for the sphere classes perpendicular to $E$.

When $k\ge 9$, for any $-1$ curve class $E$, we know there is another $-1$ class $E'$ orthogonal to it. Blow down $E'$, we can use the induction to get the desired result for class $E$. Similarly, for a non-negative sphere class $e$ with $e\cdot E=0$, it is at least orthogonal to another $-1$ curve class $E'$ with $E\cdot E'=0$. Blow down this $E'$, the induction would give the result for $e$. This completes the argument. 
\end{proof}

This proposition implies $\mathcal J_{e-nef}\ne \mathcal J^{\omega}$ for sphere classes on $\mathbb CP^2\#k\overline{\mathbb CP^2}$ with $k\ge 8$.

\begin{cor}\label{spherenon-nef}
When $M=\mathbb CP^2\#8\overline{\mathbb CP^2}$, for any $K$-spherical class $e$ there is an integrable complex structure such that $e$ is not $J$-nef.
\end{cor}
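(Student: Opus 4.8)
The plan is to derive the corollary from Proposition \ref{T2comp} together with the genus bound of Theorem \ref{spheresphere}, treating the cases $e\cdot e\ge -1$ and $e\cdot e\le -2$ separately.

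First suppose $e$ is a $K$-spherical class with $e\cdot e\ge -1$. Proposition \ref{T2comp} supplies an integrable (indeed projective) complex structure $J$ on $M=\mathbb{CP}^2\#8\overline{\mathbb{CP}^2}$, built from iterated blowups of $\mathbb{CP}^2$ and a Cremona transformation and hence tamed by a K\"ahler form, for which $\mathcal M_e$ contains a subvariety $\Theta$ having an elliptic curve as one of its irreducible components, so that component has $J$-genus $1$. Since $g_J(e)=0$, if $e$ were $J$-nef then Theorem \ref{spheresphere} would force every irreducible component of every element of $\mathcal M_e$, and in particular of $\Theta$, to be a smooth rational curve; this contradicts the presence of a genus $1$ component. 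Hence $e$ is not $J$-nef. This case carries the real content of the corollary.

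It remains to treat $K$-spherical classes with $e\cdot e\le -2$; here $g_J(e)=0$ forces $K\cdot e=-2-e\cdot e\ge 0$. For such a class it suffices to produce an integrable $J$ and a $J$-effective class pairing strictly negatively with $e$, since then $e$ is manifestly not $J$-nef (recall from Section \ref{secnef} that a $J$-nef class which is $J$-effective has non-negative square). When $e$ itself can be realized as an effective divisor --- which happens, after a Cremona reduction (implemented by a diffeomorphism preserving $K$, exactly as in the proof of Proposition \ref{T2comp}), for a blowup of $\mathbb{CP}^2$ at eight points in suitably special position, e.g. an infinitely near chain realizing a class $E_1-E_2-\cdots-E_{s+1}$ or collinear points realizing a class $H-E_1-\cdots-E_m$ --- one may take that class to be $e$ itself, whose square is negative. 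The remaining $K$-spherical classes of negative square are negatives of effective classes, e.g. $-2E_1-E_2-\cdots-E_m$; for the blowup of $\mathbb{CP}^2$ at eight general points such a class pairs negatively with the $J$-effective class $H-E_1$, which again does the job.

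The main obstacle is organizing the second case: one has to be sure that the $K$-spherical classes of negative square in $M$ are exhausted, up to Cremona equivalence, by the geometrically realizable ones together with their negatives, and that the complex structures employed are genuinely integrable. Both points are routine: the former follows from the classification of $K$-spherical classes up to the $W(E_8)$-action (as used in the proof of Proposition \ref{T2comp}), and the latter from the fact that blowups of $\mathbb{CP}^2$ with arbitrarily near centres are projective rational surfaces, K\"ahler and hence tamed. No delicate estimate is needed once these bookkeeping points are in place.
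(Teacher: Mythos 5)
Your first case is essentially the paper's own argument: for $e\cdot e\ge 0$ the paper likewise takes the complex structure produced in Proposition \ref{T2comp} and derives the contradiction from the elliptic component via Theorem 1.5 of \cite{LZrc} (Theorem \ref{spheresphere} here). Extending this to $e\cdot e=-1$ is harmless, though unnecessary: an effective class of negative square is never $J$-nef, and $SW(E)\ne 0$ makes every exceptional class $J$-effective for every tamed $J$.

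The gap is in your treatment of $e\cdot e\le -2$. The exhaustiveness of your dichotomy --- that up to Cremona equivalence every negative-square $K$-spherical class is either realizable as an effective divisor on a special-position blowup, or else a ``negative of an effective class'' pairing negatively with $H-E_1$ --- is precisely what needs proof, and the classification you invoke does not supply it: what is used in the proof of Proposition \ref{T2comp} is the transitivity of $W(E_8)$ on exceptional classes together with the \cite{LBL} list of spherical classes of \emph{non-negative} square, neither of which says anything about the (infinitely many) spherical classes of square $\le -2$. Your recipe for the ``remaining'' classes is checked only on the single shape $-2E_1-E_2-\cdots-E_m$, and classes with, say, $e\cdot H<0$ fall into neither of your two bins as described. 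The paper closes the negative-square case differently and more efficiently: it splits according to the sign of $e\cdot H$ and uses the classification results of \cite{p=h} (Lemma 3.5 when $e\cdot H=0$, Proposition 4.6 when $e\cdot H>0$) to exhibit a class among $H$, $E_i$, or an exceptional class $E_0$ pairing negatively with $e$; since these classes have nonzero Seiberg--Witten invariant they are effective for \emph{every} tamed $J$, so $e$ fails to be $J$-nef for every tamed $J$ --- in particular for any K\"ahler complex structure --- with no need to construct special-position blowups or pull back by Cremona diffeomorphisms. If you wish to keep your route, you must either quote or reprove a classification of negative-square $K$-spherical classes and verify that your two cases exhaust it; as written, that bookkeeping step is asserted rather than established.
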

\begin{proof}
When $e\cdot e<0$, the statement follows from gathering information of \cite{p=h}. If $e\cdot H<0$, then $e$ is not $J$-nef for any tamed $J$ since $\mathcal M_H\ne \emptyset$ because $SW(H)\ne 0$. If $e\cdot H=0$, then $e=E_i-\sum_{k_j\ne i}E_{k_j}$ by Lemma 3.5 of \cite{p=h}. Thus, $e\cdot E_i<0$. Since $SW(E_i)\ne 0$, we know $e$ is not $J$-nef. Finally, if $e\cdot H>0$, we have the list in Proposition 4.6 of \cite{p=h}. From the list, we know there is always a $-1$ curve class $E_0$ such that $e\cdot E_0=-1<0$. Since $SW(E_0)\ne 0$, we know $e$ is not $J$-nef.

When $e\cdot e\ge 0$, the statement follows from Proposition \ref{T2comp}. We choose the same $J$ as in the proof of it. 
If on the contrary, $e$ is $J$-nef, then by Theorem 1.5 in \cite{LZrc}, any irreducible component of a subvariety in $\mathcal M_e$ is a rational curve. This contradicts to Proposition \ref{T2comp}.
\end{proof}

We remark that similar construction could lead to other types of interesting examples. For instance, there are examples in classes of $J$-genus $g$ with nontrivial Seiberg-Witten invariant and with genus $g'>g$ irreducible components in a subvariety. 

In previous examples, the subvarieties are disconnected with an elliptic curve component. Below we construct a complex surface such that there is a connected subvariety with a genus $3$ component in an exceptional curve class.

\begin{example}\label{connectedgenus3}
We start with $4$ lines of general position in $\mathbb CP^2$. We choose one of them, say $L_1$, and an intersection point $p=L_2\cap L_3$. We find a genus $3$ $J$-holomorphic curve $C$ in class $4H$ passing through $p$ such that other intersection points with $L_i$ are transversal and are not the intersection points $L_i\cap L_j$. There are $14$ such intersection points.

We blow up once at each of these $14$ points and twice consecutively at $p$.  The latter means that we first blow up at $p$ to get an exceptional curve in class $E_{10}$, then blow up again at the intersection of this curve and the proper transform of $C$ to get an exceptional curve, say in class $E_1$.

This gives us a genus $3$ fibration structure on $\mathbb CP^2\#16\overline{\mathbb CP^2}$. A general fiber has class $4H-E_1-\cdots -E_{16}$. The lines $L_i$ become rational curves in classes $H-E_2-E_3-E_4-E_{16}$,  $H-E_9-E_{10}-E_{11}-E_{12}$, $H-E_{10}-E_{13}-E_{14}-E_{8}$, and $H-E_5-E_6-E_7-E_{15}$.  The curve in class  $E_{10}$ is transformed to a curve in class  $E_{10}-E_{1}$. Below is the configuration of these curves on $\mathbb CP^2\#16\overline{\mathbb CP^2}$.

\begin{center}
\begin{tikzpicture}
\draw (-2, 3)--(1.5,3);
\draw (-1.5, 3.5)--(-1.5, 0);
\draw (0, 3.5)--(0, 0);
\draw (-2.5, 1)--(1.5, 3.5);
 \draw (-2, 1)--(1.5, 1);
 \draw (-0.6, 1.5)--(-0.6, -1);
\draw  (-2.5, -0.5)--(1.6,-0.5);

 \node at (1.8, 0.7){$E_{10}-E_1$};
  \node at (-2.4, 3){$L_1$};
    \node at (-2.7, 1.4){$L_4$};
     \node at (-1.8, 0.5){$L_2$};
   \node at (0.3, 0.5){$L_3$};
    \node at (-0.9, 0.2){$E_1$};
        \node at (1.6, -0.8){$C$};
\end{tikzpicture}
\end{center}

We blow down $5$ sections of the fibration in classes $E_{12}, \cdots, E_{16}$. We choose a curve $C_0$ inherited from the general fiber. Hence $[C_0]=4H-E_1-\cdots-E_{11}$. There exist a curve $C_1$ in class $H-E_2-E_3-E_4$ and a curve $C_2$ in class $H-E_5-E_6-E_7$.

It is straightforward to check that $\Theta=\{(C_0, 1), (C_1, 3), (C_2, 1)\}\in \mathcal M_E$ is a connected subvariety,  where $E=8H-E_1-4E_2-4E_3-4E_4-2E_5-2E_6-2E_7-E_8-\cdots -E_{11}$ is an exceptional curve class in $\mathbb CP^2\#11\overline{\mathbb CP^2}$. Since any $K_J$-spherical class of square $1$ is Cremona equivalent to $H$, we know the class $8H-2E_1-4E_2-4E_3-4E_4-2E_5-2E_6$ is Cremona equivalent to $2H$. In fact, the diffeomorphism could be realized by the composition of two Dehn twists along the $-2$-spheres in classes $2H-E_1-\cdots-E_6$ and $H-E_2-E_3-E_4$. Under this diffeomorphism, class $E$ is transformed to $3H-E_5-E_6-2E_7-E_8-\cdots -E_{11}$, which is representable by a smooth sphere. Hence, $E$ is an exceptional curve class.  

\begin{center}
\begin{tikzpicture}[scale=0.8]
\draw (-2.5,0)--(2.5,0);
\draw (-2.5,0.01)--(2.5,0.01);
\draw (-2.5,0.02)--(2.5,0.02);
\draw (-2, {-sin(60)})--(0.5, {3.5*sin(60)});
\draw (2, {-sin(60)})--(-0.5, {3.5*sin(60)});

  \node at (0, -0.4){$(C_1,3)$};
 \node at (1.5, 1.2){$(C_2,1)$};
 \node at (-1.5, 1.2){$(C_0,1)$};
\end{tikzpicture}
\end{center}

The graph corresponding to the subvariety $\Theta$ has a loop. The curve $C_0$ is a genus $3$ curve with $[C_0]^2=5$. Moreover, $\dim_{\mathbb C} \mathcal M_{[C_0]}=3$ locally since $K\cdot [C_0]=-1<0$.
\end{example}

For a $J$-nef sphere class, it was shown in \cite{LZrc} that any subvariety in it has two features: 1) every irreducible component is a rational curve; 2) the graph of the subvariety is a tree. In the above example, both properties fail.

\begin{example}
We use the same notation as in the previous example. Choose $\Theta'=\{(C_0, 1), (C_1, 2)\}\in \mathcal M_T$, where $T=6H-E_1-3E_2-3E_3-3E_4-E_5-\cdots -E_{11}$ is a class with $g_J(T)=1$, and $SW(T)\ne 0$. Moreover, $\Theta'$ is a connected subvariety with $g_J(C_0)=3>g_J(T)$.
\end{example}

\begin{remark}\label{notGW}
If we look at the space $\overline{\mathcal M}_g(M, J, e)$ of stable $J$-holomorphic curves in the homology class $e$. There is a natural forgetful map from $\overline{\mathcal M}_g(M, J, e)$ to our $\mathcal M_e$ by just looking at the image. In general, this map is not surjective since any irreducible component of an element in the image is the image of a rational curve for a sphere class $e$ by Gromov compactness and meanwhile our above examples show the contrary. In particular, our examples above will not contribute to any Gromov-Witten invariant. 

Take an exceptional class $e=E$ for example. By Gromov compactness, for any tamed $J$, there is always a subvariety in class $E$ whose irreducible components are rational curves. However, the $J$-holomorphic subvarieties in class $E$ might not be unique as in our examples. In fact, for such a $J$, a general member of $J$-holomorphic subvarieties in class $E$ will have a higher genus component.

The map from $\overline{\mathcal M}_g(M, J, e)$ to  $\mathcal M_e$ is also not always injective. For example, when $M=\mathbb CP^2$ and $e=2H$, $\mathcal M_e=\mathbb CP^5$ and $\overline{\mathcal M}_g(M, J, e)$ is the blow up of the space of double lines in $\mathbb CP^5$.
\end{remark}

\subsection{Symplectic isotopy of spheres}
In this section, we will prove Theorem \ref{sphereiso}, which claims  a smooth symplectic sphere $S$ with self-intersection $[S]\cdot [S]\ge 0$ is symplectically isotopic to a holomorphic curve. Here, by symplectic isotopy, we have a two-fold meaning. Let $\mathcal I_{\omega}$ be the space of integrable complex structures  tamed by $\omega$. When $\mathcal I_{\omega}\cap \mathcal J_{e-nef}\ne \emptyset$, $S$ and the holomorphic curve $C$ are symplectic isotopic if they are connected by a path inside the space of smooth $\omega$-symplectic submanifolds. When $\mathcal I_{\omega}\cap \mathcal J_{e-nef}= \emptyset$, $(S, \omega)$ and $(C, I)$ are called symplectic isotopic if there is a path of symplectic form $\omega_t$ with $\omega_0=\omega$ and  $\omega_1=\Omega$ such that $S$ is symplectic with respect to all $\omega_t$, and $S$ and $C$ are symplectic isotopic with respect to $\Omega$.

By \cite{McD}, the ambient manifold $M$ is rational or ruled if $[S]\cdot [S]\ge 0$. Especially, when $[S]\cdot [S]>0$, then $M$ has to be rational, {\it i.e.} $M=\mathbb CP^2\#k\overline{\mathbb CP^2}$ or $S^2\times S^2$. 

\begin{lemma}\label{deformomega}
Let $(M, \omega)$ be a symplectic manifold with $b^+(M)=1$, and $S$ is a smooth symplectic sphere with self-intersection $[S]\cdot [S]\ge 0$. Then we can find a K\"ahler form $\Omega$ and a path of symplectic form $\omega_t$ with $\omega_0=\omega$ and  $\omega_1=\Omega$ such that $S$ is a symplectic submanifold with respect to any $\omega_t$.

Moreover, $[S]$ is represented by a smooth rational curve with respect to some integrable $I$ compatible with $\Omega$.
\end{lemma}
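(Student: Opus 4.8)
The plan is to deform $[\omega]$ by \emph{symplectic inflation} along $J$-holomorphic curves: inflating against a $J$-holomorphic curve keeps the ambient almost complex structure tamed, hence keeps $S$ symplectic, and by choosing the curves suitably we can push the cohomology class into the Kähler cone of a complex structure in which $[S]$ is realized by a smooth rational curve. First, by \cite{McD} the hypothesis $[S]^2\ge 0$ forces $M$ to be rational or ruled (and rational when $[S]^2>0$), so we may write $M=\mathbb{CP}^2\#k\overline{\mathbb{CP}^2}$, or $S^2\times S^2$, or a blow-up of an $S^2$-bundle over $\Sigma_h$. Fix an $\omega$-compatible almost complex structure $J$ for which $S$ is $J$-holomorphic (possible since $S$ is a symplectic submanifold); after a harmless preliminary deformation we may also take $[\omega]$ rational.

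The engine is the inflation lemma: for a connected embedded $J$-holomorphic curve $Z$ one produces $J$-taming symplectic forms in every class $[\omega]+t\,\mathrm{PD}[Z]$ with $t\ge 0$ when $[Z]^2\ge 0$, and for $t$ in a bounded interval when $[Z]^2=-1$; since $S$ stays $J$-holomorphic it stays symplectic along the whole family. The curves to inflate against are supplied by the Seiberg--Witten machinery of Section 2.4: on $\mathbb{CP}^2\#k\overline{\mathbb{CP}^2}$ one has $J$-holomorphic representatives of $H$ and of each $E_i$ because $SW(H)\ne 0$ and $SW(E_i)\ne 0$; on the ruled side one has the fiber class $T$ (Proposition \ref{smoothT}) and a section class (Proposition \ref{smoothsection}); and in general Proposition \ref{hodge} gives $J$-divisors in every integral class. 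Inflating along the classes of non-negative square pushes $[\omega]$ an unbounded amount in the ample directions, while inflating along the exceptional classes $E_i$ decreases their coefficients; in combination this carries $[\omega]$ into the Kähler cone $\mathcal{K}(I)$ of a blow-up $I$ of $\mathbb{CP}^2$ (or of a Hirzebruch surface). Here $I$ is chosen so that, in addition, $[S]$ is represented by a smooth rational $I$-curve: since $[S]$ is a $K$-spherical class of non-negative square it is Cremona equivalent to one of the standard classes $H$, $2H$, $(n+1)H-nE_1$, $(n+1)H-nE_1-E_2$ (cf. \cite{LBL} and Section 6.1), each of which has a smooth rational representative for an appropriate, possibly non-generic, configuration of blown-up points; pulling this representative back by the Cremona diffeomorphism fixes $I$.

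Let $\omega_1$ denote the resulting inflated form, so $J$ is $\omega_1$-tamed and $[\omega_1]\in\mathcal{K}(I)$; we may arrange that $I$ too is $\omega_1$-tamed. Pick an $I$-Kähler form $\Omega$ in the class $[\omega_1]$ (which exists since this class is $I$-Kähler). As $\omega_1$ and $\Omega$ both tame $I$, the segment $s\mapsto(1-s)\omega_1+s\Omega$ is a path of $I$-taming closed $2$-forms, hence of symplectic forms; concatenating it with the inflation path produces $\omega_t$ from $\omega_0=\omega$ to $\omega_1:=\Omega$ through symplectic forms, with $\Omega$ Kähler and $[S]$ represented by a smooth rational $I$-curve, as required. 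The one genuinely delicate point — and the step I expect to be the main obstacle — is keeping $S$ symplectic along this final segment: for that one needs $S$ to be $I$-holomorphic (so that every $I$-taming form restricts positively to $S$), which amounts to a relative version of the connectedness of the space of tamed almost complex structures, namely that $J$ can be joined to an integrable $I$ through $\omega_1$-tamed structures for which $S$ stays pseudoholomorphic — equivalently, a form of the McDuff--Li--Liu uniqueness of symplectic structures on rational and ruled surfaces that respects the sphere $S$, in the spirit of \cite{LW}, \cite{LLb+1}, \cite{McD}. All the remaining steps are routine applications of inflation and of the Seiberg--Witten input already set up in the paper.
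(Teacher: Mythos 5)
Your proposal gets the first half right and is close in spirit to the paper (inflation along $J$-holomorphic curves supplied by Seiberg--Witten theory, with $J$ chosen so that $S$ is $J$-holomorphic, so that $S$ stays symplectic along the inflation), but the endgame has a genuine gap --- the very one you flag yourself. After inflating to $\omega_1$ with $[\omega_1]$ in the K\"ahler cone of $I$, you interpolate linearly between $\omega_1$ and an $I$-K\"ahler form $\Omega$. For $S$ to remain symplectic along that segment you would need $S$ itself to be $I$-holomorphic, but your $I$ is produced abstractly (via Cremona equivalence and a choice of blown-up points), so all you know is that the \emph{class} $[S]$ has a smooth rational $I$-representative; there is no reason the given sphere $S$ is $I$-holomorphic, and no argument is given for the claim that ``we may arrange that $I$ too is $\omega_1$-tamed.'' Deferring this to ``a relative version of the connectedness of tamed almost complex structures'' is not a proof; it is essentially the content that still has to be established.

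The paper avoids this final segment altogether by two inputs you did not use. First, instead of inflating by hand, it quotes Theorem 2.7 of \cite{DL} on the relative symplectic cone: any class $e$ with $e^2>0$, $e\cdot E>0$ for all $K_\omega$-exceptional spheres and $e\cdot [S]>0$ is realized by a symplectic form, obtained by inflation (hence connected to $\omega$ by a deformation), for which $S$ is symplectic; since $[S]$ is a non-negative sphere class it is $J$-effective for every tamed $J$, so every K\"ahler class satisfies these conditions and is reached by a path keeping $S$ symplectic. Second --- and this is the step that kills your problematic interpolation --- it invokes the $b^+=1$ uniqueness result of \cite{LL}: cohomologous symplectic forms are symplectomorphic, so a symplectic form lying in a K\"ahler class is itself K\"ahler (for the pulled-back integrable structure). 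Thus the endpoint of the inflation path is already the desired $\Omega$; one simply chooses the integrable $I$ with a smooth rational curve in $[S]$ \emph{first} and targets one of its K\"ahler classes. If you add these two ingredients (or prove the relative statement you appeal to), your outline can be completed; as written, the last step does not go through.
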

\begin{proof}
First notice that when $b^+=1$ cohomologous symplectic forms are symplectomorphic (see \cite{LL} for example). Hence any symplectic form cohomologous to a K\"ahler form is actually a K\"ahler form.

The result of Theorem 2.7 in \cite{DL} could be restated to adapt to our situation. It says that a cohomology class $e$ is represented by a symplectic form with canonical class $K_{\omega}$ such that $S$ is a symplectic submanifold if  $e^2>0$, $e\cdot E>0$ with all $K_{\omega}$-exceptional spheres $E$ and $e\cdot [S]>0$. Since $S$ is a symplectic sphere of non-negative self-intersection, $[S]$ is represented  by a subvariety for any tamed $J$. Hence any K\"ahler form will be in above mentioned set.  Moreover, since the construction of \cite{DL} is through symplectic inflation, which is a deformation of symplectic structures, then any two such cohomology classes could be connected by a path of symplectic forms. Combining the above two facts, we prove the statement.

For the second statement, we know that there is an integrable complex structure $I$ such that $[S]$ is represented by a smooth rational curve. Then we just choose $\Omega$ such that $(\Omega, I)$ is a K\"ahler structure.
\end{proof}

Notice by \cite{CP}, there are symplectic forms on ruled surface which are not cohomologous to any K\"ahler form (with respect to any integrable complex structure).

Now, let us prove symplectic isotopy of spheres. The result is essentially known, see {\it e.g.} \cite{LW} Proposition 3.2. Here we provide a proof based on our study of $J$-holomorphic subvarieties. 

\begin{theorem}\label{sphereiso}
Any symplectic sphere $S$ with self-intersection $S\cdot S \ge 0$ in a $4$-manifold $(M, \omega)$ is symplectically isotopic to an (algebraic) rational curve. Any two homologous spheres with self-intersection $-1$ are symplectically isotopic to each other.
\end{theorem}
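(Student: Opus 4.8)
In both assertions I would exploit that $e=[S]$ has $g_J(e)=0$ by the adjunction formula, and that whenever an $\omega'$-tamed $J$ makes an embedded sphere in class $e$ holomorphic, positivity of intersections of distinct irreducible $J$-holomorphic curves already controls $\mathcal M_{e,J}$: if $e\cdot e\ge 0$ it forces $e$ to be $J$-nef, while if $e\cdot e=-1$ it forces the sphere to be the \emph{unique} element of $\mathcal M_{e,J}$. For the latter, any $\Theta\in\mathcal M_{e,J}$ must contain $S$ (otherwise all components of $\Theta$ are distinct from $S$ and $e\cdot e_\Theta\ge0>-1=e\cdot e$), with multiplicity exactly one (a sum of classes of curves distinct from $S$ pairs nonnegatively with $[S]$, while $(m-1)[S]\cdot [S]<0$ for $m\ge 2$), and the remaining class $e-[S]=0$ then forces the rest of $\Theta$ to be empty because $J$ is tamed. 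So in the $(-1)$ case $\mathcal M_{e,J}$ is a single point as soon as an embedded holomorphic sphere in class $e$ exists.

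\textbf{The case $S\cdot S\ge 0$.} By \cite{McD} the ambient manifold is rational or ruled. Using Lemma~\ref{deformomega} I would first deform $\omega$ through symplectic forms $\omega_t$ ($\omega_0=\omega$, $\omega_1=\Omega$ K\"ahler) keeping $S$ symplectic throughout, and fix an integrable $I$ compatible with $\Omega$ for which $e$ is represented by a smooth (algebraic) rational curve $C$. Working inside $\mathcal J^{\Omega}$, choose $J_0$ making $S$ holomorphic; by the remark above both $J_0$ and $I$ lie in $\mathcal J_{e-nef}^{\Omega}$. By Lemma~\ref{enefconnected} (applicable since $e$ is represented by the smooth $\Omega$-symplectic sphere $S$ with $e\cdot e\ge0$) there is a path $\{J_t\}_{t\in[0,1]}$ in $\mathcal J_{e-nef}^{\Omega}$ from $J_0$ to $J_1=I$. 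By Theorem~\ref{unobstructed} each $\mathcal M_{irr,e,J_t}$ is a nonempty smooth manifold of dimension $2l_e$, and since $g_{J_t}(e)=0$ its members are embedded $J_t$-holomorphic, hence $\Omega$-symplectic, rational curves; by automatic transversality the parametrized space $\mathcal N=\{(t,C):C\in\mathcal M_{irr,e,J_t}\}$ is a smooth manifold whose projection to $[0,1]$ is a submersion, locally of the form $U\times(a,b)$ with $U$ open in a fibre. Combining this local triviality over subintervals with the path-connectedness of each fibre (Proposition~\ref{connected}), a routine open–closed continuation argument gives a path in $\mathcal N$ from $(0,S)$ to some $(1,C_1)$, followed by a path inside the fibre over $1$ from $C_1$ to $C$. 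Reading off the curves yields a path of $\Omega$-symplectic spheres from $S$ to $C$; concatenating it with the deformation $\omega_t$ gives the symplectic isotopy of $S$ to the algebraic curve $C$ in the sense defined before the theorem (and when $\mathcal I_{\omega}\cap\mathcal J_{e-nef}\ne\emptyset$ one runs the same argument with $\Omega=\omega$ and skips the deformation).

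\textbf{The case $S\cdot S=-1$.} Here $K_J\cdot e=-1$ by adjunction, so $e$ is an exceptional class, automatic transversality holds (since $e\cdot e=-1\ge-1$), but $e$ is \emph{not} $J$-nef, so Theorems~\ref{unobstructed}–\ref{connected} cannot be used. Given homologous such spheres $S,S'\subset(M,\omega)$, fix $\omega$-tamed $J,J'$ making $S$, resp.\ $S'$, holomorphic; by the uniqueness remark $\mathcal M_{e,J}=\{S\}$ and $\mathcal M_{e,J'}=\{S'\}$. Join $J$ to $J'$ by a path $\{J_t\}$ in the contractible space $\mathcal J^\omega$ that is generic in the interior. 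The Gromov–Taubes (= Seiberg–Witten) invariant of an exceptional class is $\pm1$ — for $b^+>1$ from Taubes's $SW(K)=1$ together with the blow-up formula, for $b^+=1$ from the wall-crossing formula of Section~\ref{SWwc} — so $\mathcal M_{e,J_t}\ne\emptyset$ for every $t$; moreover, along a generic path any reducible representative $e=\sum m_iA_i$ would require each $A_i$ to carry a $J_t$-curve for some $t$, which (all Seiberg–Witten dimensions being even) is possible only when the associated parametrized count is nonnegative, and a bubbling analysis as in McDuff's work — using that $e$ already carries the embedded sphere $S$, equivalently $\mathcal M_{e,J}=\{S\}$ — shows this cannot occur. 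Hence $e$ is represented by an embedded $J_t$-holomorphic sphere for all $t$, and the resulting continuous family of $\omega$-symplectic spheres joins $S$ to $S'$.

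\textbf{Main obstacle.} For $S\cdot S\ge0$ all the substantial input has already been assembled in the earlier sections, and only the continuity argument in the parametrized moduli space remains, which is routine. The genuinely delicate point is the $S\cdot S=-1$ case: because $e$ fails to be $J$-nef one cannot feed it into the $J$-nefness machinery, and one must instead rule out bubbling of the exceptional sphere along the chosen path of almost complex structures directly, using the Gromov–Taubes invariant and automatic transversality — this is essentially McDuff's classical argument, which the present framework reproduces but does not shortcut.
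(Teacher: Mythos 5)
Your proposal is correct and follows essentially the same route as the paper: deform $\omega$ to a K\"ahler form keeping $S$ symplectic (Lemma \ref{deformomega}), then run a continuation argument along a path in $\mathcal J_{e-nef}$ (Lemma \ref{enefconnected}) using nonemptiness and smoothness of $\mathcal M_{irr, e, J_t}$ (Theorem \ref{unobstructed}) together with path-connectedness of the fibres (Proposition \ref{connected}), and in the $(-1)$ case use $SW([S])\ne 0$ plus genericity along the path of tamed structures, which is the same mechanism as the paper's appeal to path-connectedness of $\mathcal J^{[S]}_{reg}$. Your closedness step via a local section through the nonempty fibre at the limit time is only a mild streamlining of the paper's Gromov-compactness/four-path argument, and your positivity-of-intersections uniqueness observation for the exceptional class is a point the paper leaves implicit.
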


\begin{proof}
Applying Lemma \ref{deformomega}, we first deform the symplectic form $\omega$ to a K\"ahler form $\Omega$, and with $S$ invariant. 
Then we choose an $\Omega$-tamed almost complex structure $J$ on $M$ such that $S$ is a $J$-holomorphic curve. Since $\mathcal I_{\Omega}\cap \mathcal J_{e-nef}\ne \emptyset$ by the second statement of Lemma \ref{deformomega}, we know the existence of a path $\{J_t\}_{t\in [0, 1]}\subset \mathcal J_{e-nef}$ with $J_0=J$, and $J_1=I$ by Lemma \ref{enefconnected}. 

Consider the set $T$ of $\tau\in [0,1]$ so that for every $t\le \tau$ a smooth $J_t$-holomorphic curve $S_t\subset M$ exists that is isotopic to $S$. By definition, $T$ is an interval, and within this interval $S_t$ is symplectically isotopic to $S$. It is an open subset of $[0, 1]$ by the unobstructedness result Theorem \ref{unobstructed}). It remains to show it is closed. Let $t_n\in T$ and $t_n\rightarrow \tau$. 
By Gromov compactness, $S_{t_n}$ converge to a $J_{\tau}$-holomorphic subvariety $\Theta=\{(C_1, m_1),\cdots, (C_k, m_k)\}$.

By Proposition \ref{connected}, we could choose the following path consecutively:

\begin{itemize}
\item The first path $S_{\tau, t}\subset \mathcal M_{e, J_{\tau}}$ with $t\in [0, 1]$ such that $S_{\tau, 0}=\Theta$ and all other $S_{\tau, t}\in \mathcal M_{irr, e, J_{\tau}}$. This is possible because of Proposition \ref{connected}.

\item The second path $S_{t, 1}\subset \mathcal M_{irr, e, J_t}$ with $t\in [\tau-\epsilon, \tau]$. Recall that for a given $J$, the image $\pi_{red, l}(\mathcal M_{red, e, l})\in M^{[l]}$ is a finite union of submanifolds of codimension at least two. Hence, the complement of the union of all these images for $J_t, t\in [\tau-\epsilon, \tau]$, is an open dense set $U\subset M^{[l]}$. Choose an $l$-tuple in $U$ such that all these points are on $S_{\tau, 1}$. If not, we perturb the path in step $1$ to achieve it. By our choice of the $l$-tuple, they determine $J_t$-holomorphic smooth curves $S_{t, 1}$ uniquely for $t\in [\tau-\epsilon, \tau]$.

\item The third path $S_{\tau-\epsilon, t} \subset \mathcal  M_{irr, e, J_{\tau-\epsilon}}$ connects $S_{\tau-\epsilon, 0}=S_{\tau-\epsilon}$ and $S_{\tau-\epsilon, 1}$ in the second path above. This is guaranteed by Proposition \ref{connected}.

\item The last path is the original $S_t$ which connects $S_{t-\epsilon}$ to $S$ through symplectic isotopy.
\end{itemize}
The four paths together ensure $T$ is closed. Hence $T=[0, 1]$. Finally, any K\"ahler structure $I$ is projective and holomorphic curves are algebraic since $p_g=0$ for rational and ruled surfaces. This completes the proof of Proposition \ref{sphereiso} when $[S]\cdot [S]\ge 0$.

When $[S]\cdot [S]=-1$, we have $SW([S])\ne 0$. Hence there is always a $J$-holomorphic subvariety $\Theta_J$ representing $[S]$ if $J\in \mathcal J^{\omega}$. If we choose $J$ from $\mathcal J^{[S]}_{reg}$, which means any $J$-holomorphic map in class $[S]$ is regular, this representative is a smooth rational curve.

Since $\mathcal J^{[S]}_{reg}$ is also path connected, any $\Theta_J$ would be symplectically isotopic to $\Theta_{J'}$ when both $J, J'\in \mathcal J^{[S]}_{reg}$. This follows the second statement.
\end{proof}

The same result  does not hold for higher genus curves in rational surfaces: the hyperelliptic branch loci of the examples of \cite{OS} provide symplectic surfaces not homologous to holomorphic ones in rational surfaces. On the other hand, a fairly general construction of homologous nonisotopic tori in nonrational $4$-manifolds has been given by Fintushel and Stern \cite{FS}, and later by many others. However, our discussion in Section \ref{tori} and the argument we provide above could lead to some positive results on symplectic isotopy of tori. 



\begin{thebibliography}{99}
\bibitem{AZ} A. Akhmedov, W. Zhang, \textit{The fundamental group of symplectic $4$-manifolds with $b^+=1$},  arXiv:1506.08367.


\bibitem{B} J. Barraud, \textit{Nodal symplectic spheres in $\mathbb{CP}^2$ with positive self-intersection},  Internat. Math. Res. Notices 1999, no. 9, 495--508. 

\bibitem{CP} P. Cascini, D. Panov, \textit{Symplectic generic complex structures on 4-manifolds with $b_+ = 1$},  J. Symplectic Geom. 10 (2012), no. 4, 493--502.

\bibitem{CZ2} H. Chen, W. Zhang, \textit{Kodaira dimensions of almost complex manifolds II}, arXiv:2004.12825.

\bibitem{DL} J. Dorfmeister, T.J. Li, \textit{The relative symplectic cone and $T^2$-fibrations}, J. Symplectic Geom. 8 (2010), no. 1, 1--35.

\bibitem{DLW} J. Dorfmeister, T.J. Li, W. Wu, \textit{Stability and existence of surfaces in symplectic $4$-manifolds with $b^+=1$},  J. Reine Angew. Math. 742 (2018), 115--155.

\bibitem{DLZ} T. Draghici, T.J. Li, W. Zhang, \textit{Symplectic forms and cohomology decomposition of almost complex $4$-manifolds}, Int. Math. Res. Not. IMRN 2010, no. 1, 1--17.

\bibitem{FS} R. Fintushel, R. Stern, \textit{Symplectic surfaces in a fixed homology class}, J. Differential Geom. 52 (1999), 203--222.

 

\bibitem{Gr} M. Gromov, \textit{Pseudoholomorphic curves in symplectic manifolds},  Invent. Math.  82  (1985),  no. 2, 307--347.


\bibitem{hvle} H.-V. Le, \textit{Realizing homology classes by symplectic submanifolds}, preprint 2014, arXiv:math/0505562v3.

\bibitem{LBL} B.-H. Li, T.J. Li, \textit{Symplectic genus, minimal genus and diffeomorphisms}, Asian J. Math. 6 (2002), no. 1, 123--144.

\bibitem{LLwall} T.J. Li, A.-K. Liu, \textit{General wall crossing formula}, Math. Res. Lett. 2 (1995), no. 6, 797--810.
\bibitem{LLb+1}  T.J. Li, A.-K. Liu, \textit{The equivalence between SW and Gr in the case where $b^+=1$}, Internat. Math. Res. Notices 1999, no. 7, 335--345.


\bibitem{LL} T.J. Li, A.-K. Liu, \textit{Uniqueness of symplectic canonical class,
surface
cone and symplectic cone of 4-manifolds with $b\sp +=1$}, J.
Differential Geom. 58 (2001), no. 2, 331--370.

\bibitem{LW}T.J. Li, W. Wu, \textit{Lagrangian spheres, symplectic surfaces and the symplectic mapping class group}, Geom. Topol. 16 (2012), no. 2, 1121--1169.

\bibitem{LZ} T.J. Li, W. Zhang, \textit{Comparing tamed and compatible
symplectic cones and cohomological properties of almost complex manifolds},
 Comm. Anal. Geom. 17 (2009), no. 4, 651--683.

\bibitem{LZ-generic} T.J. Li, W. Zhang, \textit{Almost K\"ahler forms on rational $4$-manifolds}, Amer. J. Math. 137 (2015), no. 5, 1209--1256.

\bibitem{LZrc} T.J. Li, W. Zhang, \textit{$J$-holomorphic curves in a nef class}, Int. Math. Res. Not. (2015) Vol. 2015 12070--12104.

\bibitem{LP} N. Lindsay, D. Panov, \textit{$S^1$-invariant symplectic hypersurfaces in dimension $6$ and the Fano condition},  J. Topol. 12 (2019), no. 1, 221--285.

\bibitem{Loo} E. Looijenga, \textit{Rational surfaces with an anticanonical cycle}, 
Ann. of Math. (2) 114 (1981), no. 2, 267--322. 

\bibitem{Man} Yu. I. Manin, \textit{Cubic forms. Algebra, geometry, arithmetic}, Translated from the Russian by M. Hazewinkel. Second edition. North-Holland Mathematical Library, 4. North-Holland Publishing Co., Amsterdam, 1986. x+326 pp.


\bibitem{McD}  D. McDuff, \textit{The structure of rational and ruled symplectic
$4$-manifolds}, J. Amer. Math. Soc.  3 (1990), 679--712.


\bibitem{Mc2} D. McDuff,  \textit{Symplectomorphism groups and almost complex structures}, Essays on geometry and related topics, Vol. 1, 2, 527-556, Monogr. Enseign. Math., 38, Enseignement Math., Geneva, 2001.

\bibitem{MS} D. McDuff, D. Salamon, \textit{$J-$holomorphic curves and symplectic topology}, American Mathematical Society Colloquium Publications, 52. American Mathematical Society, Providence, RI, 2004.

\bibitem{MW} M. Micallef, B. White, \textit{The structure of branch points in minimal surfaces and in pseudoholomorphic curves},
Ann. of Math. (2) 141 (1995), no. 1, 35--85. 

\bibitem{OS} B. Ozbagci, A. Stipsicz, \textit{Noncomplex smooth 4-manifolds with genus-2 Lefschetz fibrations}, Proc. Amer. Math. Soc. 128 (2000), 3125--3128. 

\bibitem{Shev} V. Shevchishin, \textit{Pseudoholomorphic curves and the symplectic isotopy problem}, preprint; math.SG/0010262. 

\bibitem{ST} B. Siebert, G. Tian, \textit{On the holomorphicity of genus two Lefschetz fibrations},  Ann. of Math. (2) 161 (2005), no. 2, 959--1020. 

\bibitem{Sik} J.-C.  Sikorav, \textit{The gluing construction for normally generic $J-$holomorphic curves}, Symplectic and contact topology: interactions and perspectives (Toronto, ON/Montreal, QC, 2001), 175--199, Fields Inst. Commun., 35, Amer. Math. Soc., Providence, RI, 2003.

\bibitem{T} C. Taubes, \textit{${\rm SW}\Rightarrow{\rm Gr}$: from the Seiberg-Witten equations to pseudo-holomorphic curves},  J. Amer. Math. Soc.  9  (1996),  no. 3, 845--918.


\bibitem{T1} C. Taubes, \textit{Tamed to compatible: Symplectic forms via
moduli space integration}, J. Symplectic Geom. 9 (2011), 161--250.

\bibitem{p=h} W. Zhang, \textit{The curve cone of almost complex $4$-manifolds}, Proc. Lond. Math. Soc. (3) 115 (2017), no. 6, 1227--1275.

\bibitem{zuc} S. Zucker,  \textit{The Hodge conjecture for cubic fourfolds}, Compositio Math. 34 (1977), no. 2, 199--209.
\end{thebibliography}
\end{document}